\newcommand{\X}{{\cal X}}
\newcommand{\Y}{{\cal Y}}
\newcommand{\D}{{\mathscr{D}}}
\def\A{{\cal A}}
\def\S{{\cal S}}
\newcommand{\V}{{\cal V}}
\newcommand{\Z}{{\cal Z}}
\def\I{{\mathbb I}}
\newcommand{\cH}{{\cal H}}
\newcommand{\Rank}{\mathop{\rm Rank}}
\newcommand{\RR}{\mathbb{R}}
\newcommand{\EE}{\mathbb{E}}
\newcommand{\roc}{\rm ROC}
\newcommand{\auc}{\rm AUC}
\newcommand{\mmd}{\rm  MMD}
\newcommand{\bX}{\mathbf{X}}
\newcommand{\bY}{\mathbf{Y}}
\newcommand{\bZ}{\mathbf{Z}}
\newcommand{\bF}{F}
\newcommand{\dd}{\mathrm{d} }
\newcommand{\iid}{\textit{i.i.d.}}
\newcommand{\cdf}{\textit{c.d.f.}\;}
\newcommand{\df}{\textit{d.f.}}
\newcommand{\rv}{\textit{r.v.}}
\newcommand{\ie}{\textit{i.e.}\;}
\newcommand{\wrt}{\textit{w.r.t.}\;}
\newcommand{\eg}{\textit{e.g.}\;}
\newcommand{\resp}{\textit{resp.}\;}
\newcommand{\IFF}{\textit{iff.}\;}
\newcommand{\vc}{{\sc VC}}
\newtheorem{hyp}{\textit{Assumption}} 
\newtheorem{theorem}{Theorem}
\newtheorem{lemma}[theorem]{Lemma}
\newtheorem{proposition}[theorem]{Proposition}
\newtheorem{definition}[theorem]{Definition}
\newtheorem{remark}{Remark}
\newtheorem{example}{Example}
\numberwithin{equation}{section}
\theoremstyle{plain}
\title{A Bipartite Ranking Approach to the Two-Sample Problem}
    \author[1]{Stephan Cl\'emen\c{c}on}
    \author[2]{, Myrto Limnios  \thanks{Corresponding author. Authors in alphabetical order.}}
    \author[3]{, Nicolas Vayatis}
    \affil[1]{ \small \url{stephan.clemencon@telecom-paris.fr}\\
        Telecom Paris, LTCI, Institut Polytechnique de Paris\\
        19 place Marguerite Perey, Palaiseau, 91120, France.}
    \affil[2]{ \small \url{myli@math.ku.dk}\\
    University of Copenhagen, Department of Mathematical Sciences\\
    Universitetsparken 5, 2100 Copenhagen, Denmark.}
    \affil[2]{\small  \url{ nicolas.vayatis@ens-paris-saclay.fr}\\
    Université Paris Saclay, Université Paris Cité, ENS Paris Saclay\\
    CNRS, SSA, INSERM, Centre Borelli\\
    4 avenue des Sciences,	Gif-sur-Yvette, 91190, France.}
    \date{}
\begin{document}

	\maketitle

    \begin{abstract}
        The \textit{two-sample problem}, which consists in testing whether independent  samples on $\mathbb{R}^d$  are drawn from the same (unknown) distribution,  finds applications in many areas. Its study in high-dimension is the subject of much attention, especially because the information acquisition processes at work in the Big Data era often involve various sources, poorly controlled, leading to datasets possibly exhibiting a strong sampling bias.
        While classic methods relying on the computation of a discrepancy measure between the empirical distributions face the curse of dimensionality, we develop an alternative approach based on statistical learning and extending \textit{rank tests}, capable of detecting small departures from the null assumption in the univariate case when appropriately designed. Overcoming the lack of natural order on $\mathbb{R}^d$ when $d\geq 2$, it is implemented in two steps. Assigning to each of the samples a label (positive \textit{vs} negative) and dividing them into two parts, a preorder on $\mathbb{R}^d$ defined by a real-valued scoring function is learned by means of a bipartite ranking algorithm applied to the first part and a rank test is applied next to the scores of the remaining observations to detect possible differences in distribution. Because it learns how to project the data onto the real line nearly like (any monotone transform of) the likelihood ratio between the original multivariate distributions would do, the approach is not much affected by the dimensionality, ignoring ranking model bias issues, and preserves the advantages of univariate rank tests. Nonasymptotic error bounds are proved based on recent concentration results for two-sample linear rank-processes and an experimental study shows that the approach promoted surpasses alternative methods standing as natural competitors.
        \end{abstract}

%\begin{keywords}
%	Bipartite ranking, nonparametric statistical hypothesis testing, two-sample linear rank statistic/process, two-sample problem.
%\end{keywords}
        
        %\begin{keyword}
        %    \kwd{Bipartite ranking}
        %    \kwd{nonparametric statistical hypothesis testing}
        %    \kwd{two-sample linear rank statistic/process}
        %    \kwd{two-sample problem}
        %\end{keyword}
        %\end{frontmatter}
        
        \section{Introduction}
        The (nonparametric) statistical hypothesis testing problem referred to as the \textit{two-sample problem} is of central importance in statistics and machine-learning. Based on two independent $\iid$ samples $\{\mathbf{X}_1,\; \ldots,\; \mathbf{X}_n\}$ and  $\{\mathbf{Y}_1,\; \ldots,\; \mathbf{Y}_m\}$
         of random variables defined on the same probability space $(\Omega,\; \mathcal{F},\; \mathbb{P})$, valued in the same space $\Z$, usually $\mathbb{R}^d$ with $d\geq 1$, and drawn from  (unknown) probability distributions $G$ and $H$ respectively, with $n,\; m\geq 1$,  it consists in testing whether the distributions $G$ and $H$ are the same or not, \textit{i.e.}, in testing the null (composite) hypothesis $\mathcal{H}_0: G=H$ against the alternative $\mathcal{H}_1: G\neq H$.\\
         
        Easy to formulate, this generic problem is ubiquitous. It finds applications in many areas, clinical trials in particular, in order to determine whether the fluctuations of a collection of measurements performed over two statistical populations subject to different treatments are simply due to the sampling phenomenon or else to the effect of the treatment. It may also be used in the context of multimodal data fusion, to decide whether two datasets can be pooled or not. Indeed, selection/sampling bias issues are also a major concern in machine learning now. As recently highlighted by theoretical and empirical works (see \textit{e.g.} \cite{BCGN21} or \cite{racialfaces}), a poor control on the acquisition process of training data, even massive, can significantly jeopardize the generalization ability of the learned predictive rules.\\
        
        Various dedicated approaches have been introduced in the statistical literature, see \textit{e.g.} section 6.9 in \cite{LehmRo05} or Chapter 3.7 in \cite{vdVWell96}.  Many of them consist in computing first nonparametric estimators $\widehat{G}_n$ and $\widehat{H}_m$ of the underlying distributions (possibly smoothed versions of the %raw 
        empirical distributions, typically), evaluating next a distance or information-theoretic pseudo-metric $\mathcal{D}(\widehat{G}_n,\widehat{H}_m)$ between the latter in order to measure their dissimilarity (\textit{e.g.} the two-sample Kolmogorov-Smirnov statistic) and rejecting $\mathcal{H}_0$ for `large' values of the statistic $\mathcal{D}(\widehat{G}_n,\widehat{H}_m)$, see \cite{BG05} or \cite{GBRSS12} for instance. Beyond computational difficulties and the necessity of identifying a proper standardization in order to make the statistic asymptotically pivotal, {\em i.e.}, its limit distribution is parameter-free (this generally requires in practice the use of resampling/bootstrap techniques), the major issue one faces when trying to implement such \textit{plug-in} procedures is related to the curse of dimensionality. Indeed, such procedures involve the consistent estimation of distributions on a feature space of possibly very large dimension $d\in \mathbb{N}^*$.\\

        The approach promoted and analyzed in this article is very different and is inspired from \textit{rank tests} in the univariate case, see \textit{e.g.} section 6.8 in \cite{LehmRo05}. Referring to $G$ and $\{\bX_1,\;\ldots,\; \bX_n\}$ as the `positive' distribution and the `positive' sample and to $H$ and $\{\bY_1,\;\ldots,\; \bY_m\}$ as the `negative' distribution and the `negative' sample, such methods consist in computing a functional of the `positive ranks', \textit{i.e.}, the ranks of the positive instances among the pooled sample of size $N=n+m$. The rationale behind the use of such statistics, called \textit{two-sample rank statistics}, for univariate two-sample problems naturally lies in the fact that they are pivotal under the null assumption (\textit{i.e.} ignoring ties, the positive ranks are uniformly distributed on $\{1,\; \ldots,\; N\}$ under $\mathcal{H}_0$), providing tests with zero bias. The notion of two-sample \textit{linear} rank statistics, namely the (possibly weighted) sum of the images of the positive ranks divided by $(N+1)$  by a score generating function $u\in (0,1)\mapsto \phi(u)$, has been extensively studied in statistics. For an appropriate choice of $\phi$, the related rank test is known to have asymptotic power with optimal properties, describing its capacity to detect alternatives close to the null assumption, see \textit{e.g.} \cite{HajSid67} or Chapter 15 in \cite{vdV98}. For instance, the popular Mann-Whitney-Wilcoxon `ranksum' statistic, widely used to test whether a distribution is stochastically larger than another one, corresponds to the case $\phi(u)=u$ and is optimal to detect asymptotically small shifts. The methodology we propose here contrasts with straightforward extensions of such techniques to the multivariate framework based on preliminary given \textit{depth functions} (see \textit{e.g.} \cite{LiuS93}). We start from the observation that, in the univariate case, two-sample rank statistics are summaries of the statistical version of the $\roc$ curve relative to the pair of probability measures $(H,\; G)$. Under $\mathcal{H}_0$, the $\roc$ curve coincides with the diagonal of the unit square $[0,1]^2$. In particular, up to an affine transform, the ranksum statistic is nothing else than the area under the empirical $\roc$ curve ($\auc$ in abbreviated form). The method uses the $2$-split trick and is implemented in two steps. It consists in learning first from half of the samples how to rank the multivariate observations in $\mathcal{Z}$ by means of a scoring function $s:\Z\rightarrow \mathbb{R}$ transporting the natural order on $\mathbb{R}$ onto the feature space $\Z$ like (any increasing transform of) the likelihood ratio $dG/dH(z)$ would do (\textit{i.e.} the larger the score $s(Z)$ of an observation $Z$ drawn either from $H$ or $G$, the likelier $Z$ is drawn from $G$ ideally). Next, one applies a rank test to the (univariate) scores of the other half of the samples. The first step of the method thus boils down to solving a bipartite ranking problem, which can be classically formulated as a $\roc/\auc$ optimization problem, see \textit{e.g.} \cite{CV09ieee} or \cite{CV10CA}. %while
        The second step consists in testing whether the obtained empirical $\roc$ curve, significantly deviates from the diagonal, where the score generating function $\phi(u)$ used in the rank test determines the nature of the deviation. 
        By means of concentration results for two-sample linear rank processes (\textit{i.e.} collections of two-sample linear rank statistics) recently established in \cite{CleLimVay21}, the (unbiased) two-stage testing procedure is analyzed from a nonasymptotic perspective. For nonparametric classes of alternative hypotheses, we prove bounds for the type- I and II errors of tests based on a scoring function $s(z)$ maximizing a statistical counterpart of a bipartite ranking performance criterion (summarizing the $\roc$ curve) taking the form of a two-sample linear rank statistic. The capacity of the two-stage method proposed to detect `small' deviations from the null assumption, preserved even in very high dimension to a certain extent, is also thoroughly investigated from an empirical angle. An extensive experimental study is presented, covering a wide variety of two-sample problems and comparing the performance of the \textit{ranking-based tests} to that of alternative nonparametric methods documented in the literature. It should be highlighted that, if the concept of $\roc$ curve is widely used to evaluate the merits of any univariate test statistic (and find an appropriate trade-off between the two types of error), the approach developed in this article is the first attempt to use this notion to devise testing procedures in a general multivariate framework.
        Notice finally that a very preliminary version of the two-stage testing method, limited to $\auc$ optimization and `ranksum' test statistics, has been previously outlined in the conference paper \cite{CDVNIPS}. The present article presents ranking-based tests and analyzes their performance in a much more general framework: building on recent results established in \cite{CleLimVay21} a wider class of ranking-based tests are considered, a finite-sample analysis of their power is carried out here and more complete experimental results are presented.\\
        
        The paper is organized as follows. In section \ref{sec:back_prelim}, the main notations are set out, the statistical framework of the two-sample problem is recalled at length and traditional methods, rank tests in the univariate case in particular, are briefly reviewed. Insights into the connection between bipartite ranking and the two-sample problem are also provided, together with an explanation of the rationale behind the approach we promote.  The novel testing procedure we propose is  described and theoretical guarantees are established in section \ref{sec:stattest}. Experimental results are presented and discussed in section \ref{sec:num}, while several concluding remarks are collected in section \ref{sec:conclusion} Technical proofs and details are deferred to the Appendix section.

        \section{Background and Preliminaries}\label{sec:back_prelim}
        In this section, the main notations are introduced and the two-sample problem is formulated in a nonparametric framework. The existing methods to solve it are briefly reviewed, with a particular attention to \textit{rank tests} in the $1$ to $d$-dimensional case when $\Z\subset \RR^d$, and their interpretation through $\roc$ analysis. Concepts and results related to the bipartite ranking task, viewed as the problem of optimizing (summaries of) the $\roc$ curve are also recalled, insofar as the methodology proposed and analyzed in the subsequent section is based on the latter.
        Here and throughout, the indicator function of any event $\mathcal{E}$ is denoted by $\mathbb{I}\{ \mathcal{E} \}$, the Dirac mass at any point $x$ by $\delta_x$, the generalized inverse of any cumulative distribution function $W(t)$ on $\RR\cup\{+\infty\}$ by $W^{-1}(u)=\inf\{t\in (-\infty,\;+\infty]:\; W(t)\geq u\}$, $u\in [0,1]$. We denote the floor and ceiling functions by $u \in \mathbb{R}\mapsto \lfloor u \rfloor$ and by $u \in \mathbb{R}\mapsto \lceil u \rceil$ respectively. For any bounded function $\psi:(0,1)\rightarrow \mathbb{R}$, we also set $\vert\vert \psi\vert\vert_{\infty}=\sup_{u\in (0,1)}\vert \psi(u)\vert$.
        Throughout the paper, bold symbols refer to multivariate variables, \textit{e.g.}, we write $X$ and $Y$ when considering univariate random variables, while $\bX$ and $\bY$ are used when the space which they take their values in the multivariate measurable space $\Z$, usually subset of $\mathbb{R}^d$ with $d\geq 2$ possibly.
        
        \subsection{The Two-Sample Problem - Nonparametric Formulation}\label{subsec:back_SoA}
        Let  $\{\bX_1,\;\ldots,\; \bX_n\}$ and  $\{\bY_1,\;\ldots,\; \bY_m\}$ be independent $\iid$ samples drawn from probability distributions $H$ and $G$ on a measurable space $\mathcal{Z}\subset \mathbb{R}^d$. In the most general version of the two-sample problem, one makes no assumption about the distributions $H$ and $G$ and the goal pursued is to test the composite hypothesis:
        \begin{equation}\label{eq:2_sample_pb}
        \mathcal{H}_0:\; G=H \text{ against the alternative } \mathcal{H}_1:\; G\neq H~,
        \end{equation}
        based on the two samples. A classic approach is to consider a probability (pseudo-) metric $\mathcal{D}$ on the space of probability distributions on $\mathcal{Z}$. Based on the simple observation that $\mathcal{D}(G,H)=0$ under the null hypothesis, a natural testing procedure consists in computing estimates of the distributions $G$ and $H$, typically the empirical distributions:  
        
        \begin{equation*}
            \widehat{G}_n=\frac{1}{n}\sum_{i=1}^n\delta_{\bX_i} \text{ and } \widehat{H}_m=\frac{1}{n}\sum_{j=1}^m\delta_{\bY_j}~,
        \end{equation*}
        and rejecting $\mathcal{H}_0$ for `large' values of the statistic $\mathcal{D}(\widehat{G}_n,$ $ \widehat{H}_m)$, see  \cite{BG05} for instance. Various metrics or pseudo-metrics can be considered for measuring dissimilarity between two probability distributions. We refer to \cite{Ra91} for an excellent account of metrics in spaces of probability measures and their applications. Typical examples include the chi-square distance, the Kullback-Leibler divergence, the Hellinger distance, the Kolmogorov-Smirnov distance and its generalizations of the following type:
        \begin{equation} \label{mmd}
        \mmd(G,H)=\sup_{f\in \mathcal{F}}\left \vert \int_{x\in \mathcal{Z}} f(x)G(\dd x)-\int_{x\in \mathcal{Z}} f(x)H(\dd x) \right \vert%~,
        \end{equation}
        where $\mathcal{F}$ denotes a supposedly rich enough class of functions %$f:\mathcal{Z}\subset\mathbb{R}^d\rightarrow \mathbb{R}$,
        $f:\mathcal{Z}\rightarrow \mathbb{R}$, so that $\mmd(G,H)=0$ if and only if $G=H$, see Th. 5 in \cite{GBRSS12}. In the univariate case, when $\mathcal{F}$ is the set of half lines, the quantity \eqref{mmd}, refered to as \textit{Maximum Mean Discrepancy} if $\mathcal{F}$ is a particular \textit{Reproducing Kernel Hilbert Space} (RKHS), naturally reduces to the usual Kolmogorov-Smirnov statistic. Beyond computational difficulties and the necessity of identifying a proper standardization in order to make the test statistic \eqref{mmd} asymptotically pivotal ({\em i.e.} its limit distribution is parameter-free), and determining an appropriate critical threshold (this generally requires in practice the use of bootstrap techniques), the major issue one faces when trying to implement such \textit{plug-in} procedures is related to the curse of dimensionality. Indeed, such procedures involve the consistent estimation of distributions on a feature space of possibly very large dimension $d\in \mathbb{N}^*$. This difficulty can however be circumvented to a certain extent when a unit ball of a %reproducing kernel Hilbert space 
        RKHS $\mathcal{H}$ is chosen for $\mathcal{F}$ in order to allow for efficient computation of the supremum \eqref{mmd}, see \cite{GBRSS07} and \cite{HBM08}. Lastly, a related problem in computer science literature refers to the two-sample problem as property testing, see for instance \cite{RSprop96,GGR98}. 
        The methodology promoted in the present  paper for testing \eqref{eq:2_sample_pb} %the two-sample problem 
         is very different in nature and is inspired from traditional techniques based on the notion of rank statistics in the particular one-dimensional case recalled below for clarity.

        \subsection{The Univariate Case - Rank Tests and $\roc$ Analysis}\label{subsec:back_unirankroc}
        A classic approach to the two-sample problem in the one-dimensional setup consists in ranking the observed data using the natural order on $\mathbb{R}$, and taking the decision depending on the ranks of the positive instances among the pooled sample:
        \begin{equation*}
            \forall i\in\{1,\; \ldots,\; n  \},\;\; \Rank(X_i)=N\widehat{F}_{N}(X_i)~,
        \end{equation*}
        where $\widehat{F}_{N}(t)=(1/N)(\sum_{i=1}^n\mathbb{I}\{X_i \leq t  \}+\sum_{j=1}^m\mathbb{I}\{Y_j \leq t  \})$ and $N=n+m$.
         Assuming that the distributions $G$ and $H$ are continuous for simplicity (the ties occur with probability zero), the idea underlying rank tests lies in the simple fact that, under the null hypothesis $\mathcal{H}_0$, the ranks of positive instances are distribution-free, uniformly distributed over $\{1,\; \ldots,\; N\}$. A popular choice is to consider the sum of `positive ranks', leading to the well-known \textit{rank-sum} Mann-Whitney-Wilcoxon statistic, see \cite{Wil45},
        \begin{equation}\label{eq:Wilcox}
        \widehat{W}_{n,m}=\sum_{i=1}^n \Rank(X_i)~.
        \end{equation}
        It is widely used to test $\mathcal{H}_0$ against the alternative stipulating that one of the two distributions is stochastically larger than the other one. In the situation where $G(\dd t)$ is stochastically larger\footnote{Given two distribution functions $H(\dd t)$ and $G(\dd t)$ on $\mathbb{R}\cup\{+\infty\}$, it is said that $G(\dd t)$ is \textit{stochastically larger} than $H(\dd t)$ if and only if for any $t\in\mathbb{R}$, we have $G(t)\leq H(t)$. We then write: $H\leq_{sto}G$. Classically, a necessary and sufficient condition for $G$ to be stochastically larger than $H$ is the existence of a coupling $(X,\; Y)$ of $(G,H)$, \textit{i.e.} a pair of random variables defined on the same probability space with first and second marginals equal to $H$ and $G$ respectively, such that $X\leq Y$ with probability one.} than $H(\dd t)$, \textit{i.e.}, when $H(t)\geq G(t)$ for all $t\in \mathbb{R}$, the test statistic \eqref{eq:Wilcox} is expected to take `large' values. Tables for the distribution of the statistics $\widehat{W}_{n,m}$ under $\mathcal{H}_0$ being available (even in the case where some observations are tied, by assigning the mean rank to ties, see \cite{CK97}), no asymptotic approximation result is thus needed for building a test at an appropriate level. In the case where the two \cdf are linked by the relationship $G(t)=H(t-\theta)$ with $\theta\geq 0$ ($\ie$ 
        when the treatment effect is modeled as additive such that one tests $\mathcal{H}_0:\; \theta=0$), the test statistic \eqref{eq:Wilcox}  is asymptotically uniformly most powerful in the limit experiment $\theta\searrow 0$, see \textit{e.g.} Corollary 15.14 in section 15.5 of \cite{vdV98}. Other functionals of the `positive ranks' can be used as test statistics for the two-sample problem. In particular, the class of two-sample linear rank statistics defined below forms a rich collection of functionals.
        \begin{definition}\label{def:R_stat}{\sc (Two-sample linear rank sta-tistics)} Let $\phi:[0,1]\rightarrow [0,1]$ be a nondecreasing function. The two-sample linear rank statistics with score-generating function' $\phi(u)$ based on the random samples $\{X_1,\;\ldots,\; X_n\}$ and  $\{Y_1,\;\ldots,\; Y_m\}$ is given by
        \begin{equation}\label{eq:R_stat}
        \widehat{W}^{\phi}_{n,m}=\sum_{i=1}^n\phi\left( \frac{\Rank(X_i)}{N+1} \right)~.
        \end{equation}
        \end{definition}
        For $\phi(u)=u$, the statistic \eqref{eq:R_stat} coincides with $\widehat{W}_{n,m}/(N+1)$.
        Under $\mathcal{H}_0$, the statistics \eqref{eq:R_stat} are all distribution-free, which makes them particularly useful to detect differences between the distributions $G$ and $H$. 
        They can be used to design unbiased tests at chosen levels $\alpha \in (0,1)$ by tabulating their  distribution under the null assumption. The choice of the score-generating function $\phi$ can be guided by the type of difference between the two distributions (\textit{e.g.} in scale, in location) one possibly expects, and may then lead to locally most powerful testing procedures, capable of detecting certain types of `small' deviations from $\mathcal{H}_0$. Refer to Chapter 9 in \cite{Ser80} or to Chapter 13 in \cite{vdV98} for an account of the asymptotic theory of rank statistics. For instance, choosing $\phi(u)=u\times \mathbb{I}\{u\geq u_0\}$ for $u_0\in(0,1)$ or $\phi(u)=u^q$ with $q>1$ %permit to 
        enhances the role played by the highest ranks, see \cite{CV07} or \cite{Rud06}. We also emphasize that concentration properties of two-sample linear rank processes (\textit{i.e.} collections of two-sample linear rank statistics) have recently been studied in \cite{CleLimVay21}, motivated by the interpretation of \eqref{eq:R_stat} as a scalar statistical summary of the $\roc$ curve relative to the pair $(H,G)$. Based on these results, an exponential tail probability bound for \eqref{eq:R_stat} is proved in the Appendix section (see Theorem \ref{thm:tail} therein), which is used in the theoretical analysis carried out in section \ref{sec:stattest}.
        \medskip
        
        \noindent {\bf $\roc$ analysis.} The $\roc$ curve is a gold standard tool to describe the dissimilarity between two univariate probability distributions $G$ and $H$. This criterion of functional nature,  $\roc_{H,G}$, can be defined as the Probability-Probability plot:
        $ t\in \mathbb{R}\mapsto \left( 1-H(t),\; 1-G(t)  \right)$,
        where possible jumps are connected by line segments, ensuring that the resulting curve is continuous. With this convention, one may then see the $\roc$ curve related to the pair of $\df$ $(H,G)$ as the graph of a c\`ad-l\`ag (\textit{i.e.} right-continuous and left-limited) non decreasing mapping valued in $[0,1]$, defined by $\alpha \in (0,1) \mapsto  1-G\circ H^{-1}(1-\alpha)$
        at points $\alpha$ such that $G\circ H^{-1}(1-\alpha)=1-\alpha$. Denoting by $\Z_H$ and $\Z_G$ the supports of $H$ and $G$ respectively, observe that it connects the point $(0,1-G( \Z_H))$ to $(H(\Z_G),1)$ in the unit square $[0,1]^2$ and that, in absence of plateau 
        (which we assume here for simplicity, rather than restricting the feature space to $G$'s support), the curve $\alpha\in (0,1)\mapsto \roc_{G,H}(\alpha)$ is the image of $\alpha\in (0,1)\mapsto \roc_{H,G}(\alpha)$ by the reflection with the main diagonal of the Euclidean plane (\textit{i.e.} the line of equation `$\beta=\alpha$') as axis. Notice that the curve $\roc_{H,G}$ coincides with the main diagonal of $[0,1]^2$ \IFF $H=G$, $\ie$, $\mathcal{H}_0$ is true. Hence, the concept of $\roc$ curve offers a visual tool to examine the differences between two distributions in a pivotal manner, see Fig. \ref{fig:ex1}. For instance, the univariate distribution $G(dt)$ is stochastically larger than $H(dt)$ \IFF the curve $\roc_{H,G}$ is everywhere above the main diagonal and $\roc_{H,G}$ coincides with the left upper corner of the unit square \IFF the essential supremum of the distribution $H$ is smaller than the essential infimum of the distribution $G$. 
        \begin{figure}[ht!]
        \centering
        \begin{tabular}{cc}
        \parbox{6cm}{
        \begin{center}
        \includegraphics[width=0.3\textwidth]{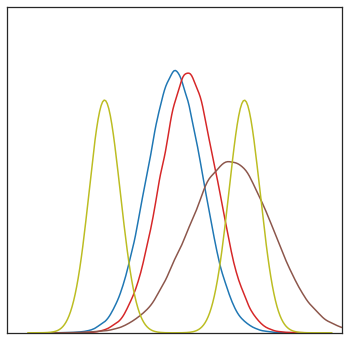}\\
        \small{a. Probability distributions}
        \end{center}
        }
        \parbox{6cm}{
        \begin{center}
        \includegraphics[width=0.3\textwidth]{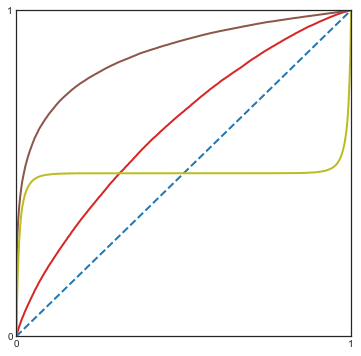}\\
        \small{b. $\roc$ curves}
        \end{center}
        }
        \end{tabular}
        %}
        \caption{Examples of pairs of distributions and their related $\roc$ curves. The `negative' distribution $H$ is represented in blue and three typical examples of `positive' distributions with their associated $\roc$ curves are in red (location model), brown (location and scale model) and green (mixture model).}
        \label{fig:ex1}
        \end{figure}
        
        Based on the samples $\{X_1,\;\ldots,\; X_n\}$ and  $\{Y_1,\;\ldots,\; Y_m\}$, a statistical version of $\roc_{H,G}$ is obtained by computing the empirical $\cdf$ 
        $\widehat{H}_m(t)=(1/m)\sum_{j=1}^m\mathbb{I}\{Y_j\leq t\}$ and $\widehat{G}_n(t)=(1/n)\sum_{i=1}^n\mathbb{I}\{X_i\leq t\}$ for $t\in \mathbb{R}$, and plotting the empirical $\roc$ curve:
        \begin{equation}\label{eq:emp_ROC}
        \widehat{\roc}_{\text{H,G}}=\roc_{\widehat{H}_m, \; \widehat{G}_n}~.
        \end{equation}
         Observe that the $\roc$ curve \eqref{eq:emp_ROC}
         is fully determined by the set of ranks occupied by the positive instances within the pooled sample $\{ \Rank(X_i):\; i=1,\; \ldots,\; n\}$.
         
        Breakpoints of the piecewise linear curve \eqref{eq:emp_ROC} necessarily belong to the set of gridpoints:
        \begin{equation*}
            \left\{ \left(j/m,\; i/n  \right):\; j\in\{1,\; \ldots,\; m-1  \},\;  i\in\{1,\; \ldots,\; n-1  \}  \right\}~.
        \end{equation*} 
        Denote by $X_{(i)}$ the order statistics related to the sample $\{X_1,\;\ldots,\; X_n\}$, \textit{i.e.},
        $
        \Rank(X_{(n)})>\cdots >\Rank(X_{(1)})
        $,
        the $\roc$ curve \eqref{eq:emp_ROC} is the continuous broken line that connects the jump points of the step curve:
        \begin{equation}\label{eq:step}
        \alpha\in [0,1]\mapsto \sum_{j=1}^m \widehat{\gamma}_j \times \mathbb{I}\{\alpha\in [(j-1)/m,\; j/m[\}~,
        \end{equation}
        where, for all $j\in\{1,\; \ldots,\; m\}$, we set
        \begin{equation*}
        \widehat{\gamma}_j= \frac{1}{n}\sum_{i=1}^n\mathbb{I}\{j\geq N-\Rank(X_{(n-i+1)})-i+2  \}~.
        \end{equation*}
        The empirical $\roc$ curve \eqref{eq:emp_ROC} is expressed as a function of the $\Rank(X_i)$'s only.
        As a consequence, any of its summary is a two-sample rank statistic, that is a measurable function of the `positive ranks'. In particular, choosing %it is the case of 
        the empirical counterpart of the Area Under the $\roc$ Curve ($\auc$ in short) is the popular $\roc$ curve summary, given by
        \begin{equation}\label{eq:auc}
        \auc_{H,G}:=\int_{0}^1\roc_{H,G}(\alpha)\dd \alpha
        =\mathbb{P}\left\{Y<X \right\}+\frac{1}{2}\mathbb{P}\left\{X=Y \right\}~.
        \end{equation}
        
        Indeed, the $\auc$ of the empirical $\roc$ curve \eqref{eq:emp_ROC}, also termed the rate of concording pairs, can be easily shown to coincide with the  \textit{ranksum} Mann-Whitney-Wilcoxon statistic \eqref{eq:Wilcox} up to an affine transformation
        \begin{equation*}
        \widehat{W}_{n,m}=nm\auc_{\widehat{H}_m, \widehat{G}_n}+\frac{n(n+1)}{2}~.
        \end{equation*}
        More generally, two-sample linear rank statistics \eqref{eq:R_stat} related to score generating functions different from $\phi(u)=u$ provide alternative summaries of the empirical $\roc$ curve and measure different ways of deviating from the main diagonal of the unit square, which coincides with $\roc_{H,H}$ under the null assumption. Notice incidentally that, under $\mathcal{H}_0$, we have
        \begin{equation*}
            \auc_{H,G}-1/2=\int_{\alpha=0}^1\left\{\roc_{H,G}(\alpha)-\alpha\right\}\dd \alpha=0~.
        \end{equation*} 
        Like \eqref{eq:Wilcox}, the statistics  \eqref{eq:R_stat} are pivotal and in order to quantify their fluctuations as the full sample sizes $n,\; m$ increase, the fraction of `positive'/`negative' observations in the pooled dataset must be controlled. Let $p\in (0,1)$ be the `theoretical' fraction of positive instances. For $N\geq 1/p$, we suppose that $n = \lfloor pN \rfloor$ and $m = \lceil (1-p)N \rceil = N - n$. Define the mixture probability distribution $F=pG+(1-p)H$. As $N$ tends to infinity, the asymptotic mean of  $\widehat{W}^{\phi}_{n,m}/n$ is:
        \begin{multline}\label{eq:lim_mean}
        W_{\phi}(H,G)=\mathbb{E}[\phi\circ F(X)] =\frac{1}{p}\int_{0}^1\phi(u) \dd u \\
        -\frac{1-p}{p}\int_{0}^1 \phi\left(p(1-\roc_{H,G}(\alpha))+(1-p)(1-\alpha)\right) \dd \alpha~, %\nonumber
        \end{multline}
        see section 3 in \cite{CleLimVay21}. In absence of any ambiguity about the pair $(H,G)$ of univariate distributions considered, we write $W_{\phi}$ for the sake of simplicity. Observe that, under $\mathcal{H}_0$, we have
        \begin{equation*}
            W_{\phi}=\int_{0}^1\phi(u) \dd u~.
        \end{equation*}

        \noindent {\bf Building a two-sample test in the $\roc$ space.} %As previously highlighted, under 
        Under $\mathcal{H}_0$, the theoretical $\roc$ curve coincides with the main diagonal of $[0,1]^2$: $\roc_{H,H}(\alpha)=\alpha$, for all $\alpha\in (0,1)$ and any distribution $H$ on $\mathbb{R}$. In addition, since the empirical $\roc$ curve is itself a function of the ranks, it is also a (functional) pivotal statistic under the null assumption. When the probability distribution $H$ is continuous,  all the possible empirical $\roc$ curves are then equiprobable under $\cH_0$. 
        Hence, in the situation where $\binom{N}{n}$ is not too large, since the ensemble $\mathcal{C}_{n,m}$ of all possible empirical $\roc$ curves based on positive and negative samples of respective sizes $n$ and $m$ is of cardinality $\binom{N}{n}$,  all broken lines included in it can be enumerated (see Fig. \ref{fig:emprocmc}) and for any $\alpha\in \left\{i/\binom{N}{n}:\; i=1,\; \ldots,\; \binom{N}{n} \right\}$, one can build a tolerance/prediction region $\mathcal{R}_{\alpha}\subset \mathcal{C}_{n,m}$ of level $\alpha$, \textit{i.e.}, a subset $\mathcal{R}_{\alpha}\subset \mathcal{C}_{n,m}$ of cardinality $\alpha\binom{N}{n}$. Then, a test rejecting $\mathcal{H}_0$ when the empirical $\roc$ curve $\widehat{\roc}_{H,G}$ falls outside $\mathcal{R}_{\alpha}$ can be considered.
        A natural way of building a critical region in the space $\mathbb{D}([0,1])$ of c\`ad-l\`ag mappings  $[0,1]\to[0,1]$,  defining a test of hypothesis $\mathcal{H}_0$ at level $\alpha$, is to: fix a pseudo-distance $D$ on $\mathbb{D}([0,1])$, sort the  $\binom{N}{n}$ curves in $\mathcal{C}_{n,m}$ by increasing distance to the first diagonal, and keep the subset $\mathcal{R}_{\alpha}$ formed by the $\lceil\binom{N}{n}(1-\alpha)\rceil$ curves closest to the diagonal in the sense of the chosen distance $D$. When choosing the distance defined by $L_1$-norm, one naturally recovers the Mann-Whitney-Wilcoxon test. However, many functional distances can be considered for this purpose.

        \begin{remark}\label{rk:AUC}  {\sc (The $\auc$ as a statistical distance)} One may easily show that
        \begin{equation}\label{eq:AUC_connect}
        \auc_{H,G}=\frac{1}{2}+\int_{-\infty}^{\infty}\{H(t)-G(t)\}\dd H(t)~,
        \end{equation}
        see the Appendix section for further details. Hence, when $H$ is stochastically smaller than $G$, the quantity $\auc_{H,G}-1/2$ is equal to the $L_1(H)$-distance between the $\cdf$ $H(t)$ and $G(t)$.
        \end{remark}

        \begin{figure}[ht!]
            \centering
          
                \begin{tabular}{cc}
                \parbox{6cm}{
                \begin{center}
                \includegraphics[width=0.3\textwidth]{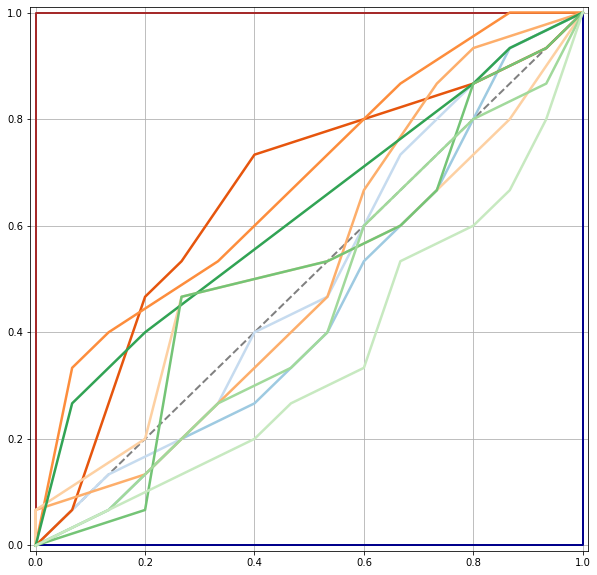}\\
                    %	\small{a. Breakpoints of empirical $\roc$ curves with $n, m = 15$ }
                        \small{a. Empirical $\roc$ curves with $n = m = 15$ }
                    \end{center}
                }
                \parbox{6cm}{
                    \begin{center}
                        \includegraphics[width=0.3\textwidth]{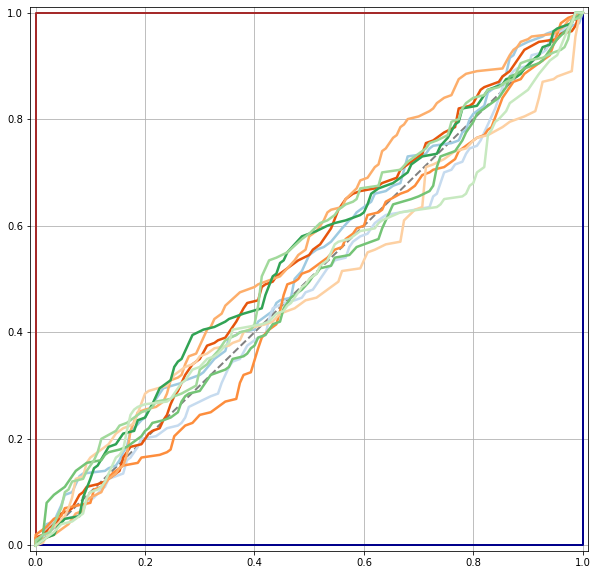}\\
                        \small{b.  Empirical $\roc$ curves with $n, m = 200, 150$}
                    \end{center}
                }
            \end{tabular}
            \caption{Examples of empirical $\roc$ curves simulations under the null hypothesis.} 
            \label{fig:emprocmc}
        \end{figure}
        
        \noindent{\bf  Extensions of two-sample rank tests to the multivariate framework.}  Given the absence of any `natural order' on $\mathbb{R}^d$ as soon as $d\geq2$,  existing methods explored different concepts of multivariate ranks statistics.
        Beyond the approaches based on component-wise ranks and copula (see \textit{e.g.} \cite{PuriSen71} or \cite{LunLevCa12}), provably valid under strong assumptions on the probabilistic model, the concept of \textit{statistical depth} aims at defining a center-outward ordering of the points in the support of a multivariate distribution $P(dx)$ on $\mathbb{R}^d$ in order to emulate ranks, see \cite{Mosler13}.
        Precisely,  a \textit{depth function} relative to $P$ is a bounded non-negative Borel-measurable mapping $D_P:\mathbb{R}^d\rightarrow \mathbb{R}_+$ that defines a preorder for multivariate points in $\mathbb{R}^d$ and hopefully determines the centrality of any point $x\in \mathbb{R}^d$ with respect to the probability measure $P$. Thus, %That is to say, any such that the 
        points $x\in \mathbb{R}^d$ near the `center' of the mass are the deepest, \textit{i.e.},  %such that 
        $D_P(x)$ is among the highest values taken by the depth function. Originally introduced in the seminal contribution \cite{Tukey75}, the \textit{half-space depth} of $x$ in $\mathbb{R}^d$ relative to $P$, is the minimum of the mass $P(S)$ taken over all closed half-spaces $S\subset \mathbb{R}^d$ such that $x\in S$. Many alternatives have been developed, refer to \textit{e.g.} \cite{Liu}, \cite{koshevoy1997}, \cite{Chaudhuri}, \cite{OJA1983}, \cite{Vardi1423}, \cite{chernozhukov2017}, \cite{DebSen19} or  \cite{BeiHall20}. In \cite{ZuoSerfling00}, an axiomatic nomenclature of statistical depths has been devised, providing a systematic way of comparing their merits and drawbacks. As proposed by \cite{LiuS93}, assuming that a certain notion of depth has been selected, a natural way of extending two-sample rank tests to the multivariate framework consists in considering the largest of the two samples available, the $\mathbf{X}$ sample say, using next part of it to compute the sampling version $D_{\widehat{S}}$ of the depth relative to $S$. Then, apply a univariate two-sample rank test to the (univariate) sample formed by the depth values of the $\mathbf{X}_i$'s, that have not been involved in the depth estimation step and that formed by the depth values of the $\mathbf{Y}_j$'s. The main limitation of depth-based rank tests naturally lies in the impact of the notion of depth considered: different choices highlight different features of the distributions, leading to possibly different decisions.
        Refer  to \cite{HalBaCAMa18}, subsection 2.1 and Appendix A.2, for a comprehensive understanding of homogeneity tests based on center-outward distributions and review on  the vast 
        literature dedicated to the concept of spatial ranks (and signs), see \eg \cite{OjaMotto95,OjaMotto97,OjaMott05}. Such models are usually held for testing classic parametric alternatives to the homogeneity hypothesis $\mathcal{H}_0$ (\textit{e.g.} location, scale). We refer to \cite{Oja10} for a comprehensive review of these approaches and to \cite{ChakChau15,ChakChau17} when considering high-dimensional settings. 
        For the sake of completeness, we finally point out the works devoted to (semi-)parametric ranks based on the Mahalanobis statistical distance, applicable to the class of elliptical distributions only, see \eg \cite{UmRan98}, \cite{HallPain02AN, HallPain02bern, HallPai08scatter}. \\
        
        As shall be seen, the approach sketched in the next subsection, and analyzed at length in section \ref{sec:stattest}, shares some similarities with the method relying on statistical depth, except that the mapping used to `project' the multivariate observations onto the real line is specifically learned from the data in order to detect at best the deviations in distribution between the two samples.
        The following subsection precisely explains how any scoring function $s:\mathcal{Z}\rightarrow \mathbb{R}$, solution of the bipartite ranking problem related to the pair $(H,G)$, permits to extend the use of $\roc$ analysis and two-sample rank statistics to the two-sample problem in the multivariate setup.
        
        \subsection{Bipartite Ranking - The Rationale Behind our Approach}\label{subsec:rationale}
        The goal of bipartite ranking is to learn, based on the `positive' and `negative' samples $\{\bX_1,\;\ldots,\; \bX_n\}$  and  $\{\bY_1,\;\ldots,\; \bY_m\}$, how to score any new observations $\bZ_1,\; \ldots,\; \bZ_k$, being each either `positive' or else `negative', that is to say drawn either from $G$ or else from $H$, without prior knowledge, so that positive instances are mostly at the top of the resulting list with large probability. A natural way of defining a total preorder\footnote{A preorder $\preccurlyeq$ on a set $\mathcal{Z}$ is a reflexive and transitive binary relation on $\mathcal{Z}$. It is said to be \textit{total}, when either $z\preccurlyeq z'$ or else $z'\preccurlyeq z$ holds true, for all $(z,z')\in\mathcal{Z}^2$.} on $\mathcal{Z}$ is to map it with the natural order on $\mathbb{R}\cup \{+\infty \}$ by means of a \textit{scoring rule}, \textit{i.e.}, a measurable mapping $s:\mathcal{Z}\rightarrow (-\infty,\; \infty]$. A preorder $\preccurlyeq_s$ on $\mathcal{Z}$ is then defined by: for all $(x,x') \in \mathcal{Z}$, $x\preccurlyeq_s x'$ \IFF $s(x)\leq s(x')$. We denote by $\S$ the set of all scoring functions. The capacity of a candidate $s(z)$ in $\S$ to discriminate between the positive and negative statistical populations is generally evaluated by means of the $\roc$ curve $\roc_{H_s,G_s}(\alpha)=\roc(s,\alpha)$, where $H_s$ and $G_s$ the pushforward probability distributions of $H$ and $G$ by the mapping $s(z)$. 
        It offers a visual tool for assessing ranking performance: the closer to the left upper corner of the unit square the curve $\roc(s,.)$, the better the scoring rule $s$. Therefore, the $\roc$ curve  conveys a partial preorder on the set of all scoring functions: for all pairs of scoring functions $s_1$ and $s_2$, one says that $s_2$ is more accurate than $s_1$ when $\roc(s_1,\alpha)\leq \roc(s_2,\alpha)$ for all $\alpha\in [0,1]$. It follows from a standard Neyman-Pearson argument that the most accurate scoring rules are increasing transforms of the likelihood ratio $\Psi(z)=dG/dH(z)$. Precisely, \cite{CV09ieee},  Proposition 4 therein, proved that the optimal set of elements is 
        \begin{equation}
        \S^*=\left\{s\in \S, \; \;\forall (z,\; z' ) \in\mathcal{Z}^2,\;\; \Psi(z)<\Psi(z') \Rightarrow s^*(z)<s^*(z')    \right\}~.
        \end{equation}
        %Precisely, 
        And, for all $(s,\; \alpha)\in \S\times (0,1)$: 
        \begin{equation*}
            \roc(s,\; \alpha)\leq \roc^*(\alpha)~,
        \end{equation*}
        
        \noindent where $\roc^*(\cdot)=\roc(\Psi,\; \cdot)=\roc(s^*,\; \cdot)$ for any $s^* \in \S$. Recall %incidentally
         that this optimal curve is non-decreasing and concave, thus always above the main diagonal of the unit square (more properties are recalled in Appendix \ref{app:rocprop}). Now, the bipartite ranking task can be reformulated in a more quantitative manner: the objective pursued is to build a scoring function $s(z)$, based on the training random samples $\{\bX_1,\;\ldots,\; \bX_n\} \text{ and } \{\bY_1,\;\ldots,\; \bY_m\}$, with a $\roc$ curve as close as possible to $\roc^*$. A typical %way of measuring 
         measure for the deviation between the two curves is to consider the distance in $\sup$ norm:
        \begin{equation}\label{eq:sup_dist}
        d_{\infty}(s, s^*)=\sup_{\alpha\in (0,1)}\left\vert \roc(s,\alpha)-\roc^*(\alpha) \right\vert~.
        \end{equation}
        %Attention should be paid that t
        This quantity is a distance between $\roc$ curves (or between the related equivalence classes of scoring functions, the $\roc$ curve of any scoring function being invariant by strictly increasing transform) not between the scoring functions themselves. Since the curve $\roc^*$ is unknown in practice, the major difficulty is that %lies in the fact that 
        no straightforward statistical counterpart of the (functional) loss \eqref{eq:sup_dist} is available. %In 
        \cite{CV09ieee}
         (see also \cite{CV10CA}) %it has been however shown 
         proved that bipartite ranking can be viewed as a superposition of cost-sensitive classification problems and %somehow
          `discretized' in an adaptive manner, %so as to apply 
          thus applying empirical risk minimization with statistical guarantees in the $d_{\infty}$-sense. The price of that procedure is an additional bias term inherent to the approximation step.
         Alternatively, the performance of a candidate scoring rule $s$ can be measured by means of the $L_1$-norm in the $\roc$ space. Observing that, in this case, the loss can be decomposed as follows:
         \begin{multline}\label{eq:L1_norm}
         d_1(s,s^*)=\int_{0}^1\vert \roc(s,\alpha)-\roc^*(\alpha)  \vert \dd \alpha =\int_{0}^1\roc^*(\alpha)  d\alpha -\int_{0}^1 \roc(s,\alpha) \dd \alpha~,
         \end{multline}
         minimizing the $L_1$-distance to the optimal $\roc$ curve boils down to maximizing the area under the curve $\roc(s,\; \cdot)$:
        \begin{equation*}
        \auc(s):=\auc_{H_s,G_s}=\mathbb{P}\{s(\bY)<s(\bX) \}+\frac{1}{2}\mathbb{P}\{s(\bY)=s(\bX) \}~,
        \end{equation*}
        where $\bX$ and $\bY$ are random variables defined on the same probability space, independent, with respective distributions $G$ and $H$, denoting by $G_s$ and $H_s$ the distributions of $s(\bX)$ and $s(\bY)$ respectively.
        The scalar performance criterion $\auc(s)$ defines a total preorder on $\mathcal{S}$ and its maximal value is denoted by $\auc^*=\auc(s^*)$, with $s^*\in \S^*$. Bipartite ranking through maximization of empirical versions of the $\auc$ criterion has been studied in several articles, including \cite{AGHHPR05} or \cite{CLV08}. Considering a class $\S_0\subset\S$, upper confidence bounds for the $\auc$ deficit of scoring rules obtained by solving the problem:
        \begin{equation}
        \max_{s\in \S_0} \; \auc_{\widehat{H}_{s,m},\widehat{G}_{s,n}}~,
        \end{equation}
        where  $\widehat{H}_{s,m}=(1/m)\sum_{j=1}^m\delta_{s(\bY_j)}$ and  $\widehat{G}_{s,n}=(1/n)\sum_{i=1}^n\delta_{s(\bX_i)}$, have been established in particular. 
        Notice that maximizing the empirical version of $\auc(s)$ over a class $\S_0$ of scoring rule candidates boils down to maximizing the rank-sum criterion:
        \begin{equation*}
        \widehat{W}_{n,m}(s):=\sum_{i=1}^n\Rank(s(\bX_i))~,
        \end{equation*}

        where $\Rank(s(\bX_i)) =N\widehat{F}_{s,N}(s(\bX_i))$  for $i\in\{1,\; \ldots,\; n  \}$ and $\widehat{F}_{s,N}(t)=(1/N)\sum_{i=1}^n\mathbb{I}\{s(\bX_i) \leq t  \}+(1/N)\sum_{j=1}^m\mathbb{I}\{s(\bY_j) \leq t  \}$ for $t\in \mathbb{R}$.
         In \cite{CleLimVay21} (see also \cite{CV08NIPS1}), the performance of  scoring rules $s(z)$ maximizing alternative scalar criteria, of the form of two-sample linear rank statistics based on the univariate samples $\{s(\bX_1),\;\ldots,\; s(\bX_n)\}$ and $\{s(\bY_1),\;\ldots,\; s(\bY_m)\}$ as well, has been investigated. Precisely, considering a non-decreasing score-generating function $\phi(u)$, the $W_{\phi}$-ranking performance criterion is defined as:
        \begin{multline}\label{eq:W_crit}
        W_{\phi}(s)=\mathbb{E}\left[ (\phi \circ F_s)(s(\bX)) \right]
         = \frac{1}{p}\int_{0}^1\phi(u)\dd u \\
        -\frac{1-p}{p}\int_{0}^1 \phi\left(p(1-\roc(s,\; \alpha))+(1-p)(1-\alpha)\right)\dd \alpha~,
         \end{multline}
         where $F_s=pG_s+(1-p)H_s$ for any $s\in \S$. Equipped with this notation, we have $W_{\phi}(s)\leq W^*_{\phi}(H,G):=W_{\phi}(s^*)$ for any $s^*\in \S^*$, and when $\phi$ is strictly increasing, %on $(0,1)$, the ensemble 
         the set of maximizers of the criterion $W_{\phi}$ coincides with $\S^*$, see Proposition 6 in  \cite{CleLimVay21}. For simplicity, we write $W^*_{\phi}(H,G)=W^*_{\phi}$, when there is no ambiguity about the pair $(H,G)$  of probability distributions considered. Whereas maximizing the quantity \eqref{eq:W_crit} boils down to performing $\auc$ maximization when $\phi(u)=u$, some specific patterns of the preorder induced by a scoring function $s(z)$ can be more or less enhanced %by the performance criterion \eqref{eq:W_crit} 
         depending on the score-generating function $\phi$ chosen. For $\phi(u)=u^q$ with $q>1$, or any other score generating function $\phi$ that rapidly vanishes near $0$ and takes much higher values near $1$, for instance, the value of \eqref{eq:W_crit} is essentially determined by the behavior of the curve  $\roc(s,\; \cdot)$ near $0$ (\textit{i.e.} the probability that $s(\mathbf{X})$ takes high values), as discussed in subsection 2.3 of \cite{CleLimVay21}.
        \medskip

        \noindent{\bf Ranking-based two-sample rank tests.} The two-sample test procedures rely on the observation that deviations of the curve $\roc^*$ from the main diagonal of $[0,1]^2$, as well as those of $W^*_{\phi}$ from $\int_0^1\phi(u)\dd u$ for appropriate score generating functions $\phi$,  provide a natural way of measuring the dissimilarity beween $G$ and $H$ in theory. As revealed by the proposition below, such deviations are equal to zero as soon as the null assumption is fulfilled.
        \begin{proposition}\label{prop:rationale}
        The following assertions are equivalent.
        \begin{itemize}
        \item[(i)] The assumption `$\mathcal{H}_0:\; H=G$' holds true.
        \item[(ii)] The optimal $\roc$ curve relative to the bipartite ranking problem defined by the pair $(H,G)$ coincides with the diagonal of $[0,1]^2$
        $$
        \forall \alpha\in (0,1),\;\; \roc^*(\alpha)=\alpha~.
        $$
        \item[(iii)] For any score-generating function $\phi(u)$, we have
        $$
        W^*_{\phi}=\int_0^1\phi(u)\dd u~.
        $$
        \item[(iv)] There exists a strictly increasing score-generating function $\phi(u)$, such that:
        $$
        W^*_{\phi}=\int_0^1\phi(u)\dd u~.
        $$
        \item [(iv)] We have $\auc^*=1/2$.
        \end{itemize}
        In addition, we have:
        \begin{equation}\label{eq:aucdist}
        \auc^*-1/2=\mathbb{E}[\vert \Psi(\bY)-1 \vert  ]~.
        \end{equation}
        \end{proposition}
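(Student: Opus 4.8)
The plan is to establish the five assertions as a single cycle of implications, (i) $\Rightarrow$ (ii) $\Rightarrow$ (iii) $\Rightarrow$ (iv) $\Rightarrow$ `$\auc^*=1/2$' $\Rightarrow$ (i), and then to derive the closed-form identity \eqref{eq:aucdist} separately. Two of these links are essentially immediate. For (i) $\Rightarrow$ (ii): under $\mathcal{H}_0$ the likelihood ratio satisfies $\Psi\equiv 1$, so $s^*$ has ROC curve equal to the diagonal, which is (ii) (one may also invoke directly that $\roc_{H,G}$ is the diagonal iff $H=G$, as recalled above). For (iii) $\Rightarrow$ (iv): the latter is just the specialization of the universally quantified statement (iii) to, e.g., the strictly increasing choice $\phi(u)=u$.

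For (ii) $\Rightarrow$ (iii) I would substitute $\roc^*(\alpha)=\alpha$ directly into the expression \eqref{eq:W_crit} for $W^*_\phi=W_\phi(s^*)$. The inner argument then collapses to $p(1-\alpha)+(1-p)(1-\alpha)=1-\alpha$, so the second integral becomes $\int_0^1\phi(1-\alpha)\,\dd\alpha$; after the change of variable $u=1-\alpha$ this equals $\int_0^1\phi(u)\,\dd u$, and collecting the prefactors $1/p$ and $-(1-p)/p$ (whose sum is $1$) yields $W^*_\phi=\int_0^1\phi(u)\,\dd u$ for every $\phi$.

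The two genuinely informative steps are (iv) $\Rightarrow$ `$\auc^*=1/2$' and `$\auc^*=1/2$' $\Rightarrow$ (i). For the former, I would rewrite the deficit by means of \eqref{eq:W_crit} and the identity $1-\alpha=p(1-\alpha)+(1-p)(1-\alpha)$ as
\begin{equation*}
W^*_\phi-\int_0^1\phi(u)\,\dd u=\frac{1-p}{p}\int_0^1\Big[\phi(1-\alpha)-\phi\big(p(1-\roc^*(\alpha))+(1-p)(1-\alpha)\big)\Big]\dd\alpha~.
\end{equation*}
Since $\roc^*(\alpha)\geq\alpha$, the two arguments of $\phi$ differ by $p(\roc^*(\alpha)-\alpha)\geq 0$, so each bracket is nonnegative; when $\phi$ is strictly increasing, the vanishing of the integral forces the arguments to coincide for almost every $\alpha$, i.e. $\roc^*(\alpha)=\alpha$ a.e., hence everywhere by continuity of the concave curve $\roc^*$, and integrating gives $\auc^*=\int_0^1\alpha\,\dd\alpha=1/2$. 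For `$\auc^*=1/2$' $\Rightarrow$ (i), I would use that $\roc^*$ is concave and lies above the diagonal: the difference $\roc^*-\mathrm{id}$ is then continuous, nonnegative, with integral $\auc^*-1/2=0$, hence identically zero, so $\roc^*(\alpha)=\alpha$. Since $\roc^*$ is the ROC curve of $\Psi$, its being the diagonal means $H_\Psi=G_\Psi$; writing $\dd G=\Psi\,\dd H$ this reads $\int_{\{\Psi\le t\}}(\Psi-1)\,\dd H=0$ for all $t$, whence $\Psi=1$ $H$-almost surely and therefore $G=H$.

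Finally, for the added identity \eqref{eq:aucdist}, the natural route is to apply Remark \ref{rk:AUC} to the univariate pushforward pair $(H_\Psi,G_\Psi)$ — legitimate since $H_\Psi\leq_{sto}G_\Psi$ for the likelihood ratio — which expresses $\auc^*-1/2$ as the $L_1(H_\Psi)$-distance $\int|H_\Psi(t)-G_\Psi(t)|\,\dd H_\Psi(t)$, and then to convert this into an expectation under $H$ through the change of variable induced by $\Psi$, exploiting the self-consistency property $\dd G_\Psi/\dd H_\Psi(t)=t$ of the likelihood ratio together with the normalization $\EE_H[\Psi]=1$. I expect this last conversion to be the main obstacle: one must handle possible atoms and plateaux of $\roc^*$ and verify carefully that the CDF-based $L_1$ integral indeed collapses to $\EE[\,|\Psi(\mathbf{Y})-1|\,]$, rather than to some other functional of $\Psi-1$.
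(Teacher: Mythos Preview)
Your cycle for the equivalences is correct and follows the same core route as the paper: both pivot on formula \eqref{eq:W_crit} to pass between assertion (ii) and the scalar conditions (iii), (iv), and $\auc^*=1/2$. The paper's argument is much terser than yours: it simply quotes the self-consistency lemma $\Psi(\bZ)=\dd G_{\Psi}/\dd H_{\Psi}(\Psi(\bZ))$ a.s.\ from \cite{CV09ieee} to obtain $(i)\Leftrightarrow(ii)$ in one stroke, and then declares the remaining equivalences ``immediate'' from \eqref{eq:W_crit}. Your step `$\auc^*=1/2$' $\Rightarrow$ (i) is effectively a self-contained re-derivation of the content of that lemma (your observation $\int_{\{\Psi\le t\}}(\Psi-1)\,\dd H=0$ for all $t$ forces $\Psi=1$ $H$-a.s.), so the two proofs are really the same argument at different levels of detail.

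Your caution about \eqref{eq:aucdist} is well placed, and in fact the obstacle you anticipate is fatal: the paper's own proof does not address this identity at all, and the formula is false as written. For a concrete counterexample take $H$ uniform on $[0,1]$ and $\dd G/\dd H(y)=2y$; then $\Psi(\bY)=2\bY$ with $\bY$ uniform, so $\mathbb{E}[\,|\Psi(\bY)-1|\,]=\int_0^1|2y-1|\,\dd y=1/2$, whereas $\roc^*(\alpha)=2\alpha-\alpha^2$ gives $\auc^*=2/3$ and hence $\auc^*-1/2=1/6\neq 1/2$. The $L_1(H_\Psi)$ integral from Remark~\ref{rk:AUC} does equal $1/6$ here, but it does not reduce to $\mathbb{E}[\,|\Psi(\bY)-1|\,]$; do not spend effort trying to close that step.
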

        We also recall that the optimal $\roc$ curve related to the pair of distributions $(H,G)$ is the same as that related to the pair of univariate distributions $(H_{s^*},G_{s^*})$ and that $\dd G/\dd H(z)=\dd G_{s^*}/\dd H_{s^*}(s^*(z))$ for any $s^*\in \S^*$, see Corollary 5 in \cite{CV09ieee}. Hence, the optimal curve $\roc^*$ is a very natural and exhaustive way of measuring the dissimilarity between two multivariate distributions, extending the basic $\roc$ analysis for distributions on $\mathbb{R}$ recalled in subsection \ref{subsec:back_unirankroc}, as illustrated by the example below.
        
        \begin{example}{\sc (Multivariate Gaussian populations)}\label{exgauss} Consider two Gaussian distributions $H$ and $G$ on $\mathbb{R}^d$ with same positive definite covariance matrix $\Gamma$ and respective means $\theta_-$ and $\theta_+$ in $\mathbb{R}^d$, supposed to be distinct. As an increasing transform of the $\log $likelihood ratio, the scoring function
        \begin{equation*}
         z\in \mathbb{R}^d\mapsto s(z)=\langle z,\; \Gamma^{-1}(\theta_+-\theta_-)\rangle
        \end{equation*}
        is optimal, denoting by $\langle .\;,\; .\rangle$ the usual Euclidean inner product on $\mathbb{R}^d$. Since it is linear, the pushforward measures $H_s$ and $G_s$ are both univariate Gaussian distributions. Denoting $\Delta(t)=(1/\sqrt{2\pi})\int_{-\infty}^t\exp(-u^2/2)\dd u$, $t\in \mathbb{R}$, the $\cdf$ of the centered standard univariate Gaussian distribution, one may immediately check that the optimal $\roc$ curve is given by, for all $\alpha\in (0,1)$,
        \begin{equation*}
        \roc^*(\alpha)= 1-\Delta\left( \Delta^{-1}(1-\alpha) - \sqrt{s( \theta_+-\theta_-)}\right) .
        \end{equation*}
        In addition, we have
        \begin{equation*}
        \auc^*-1/2= 1 - \exp\{s( \theta_+-\theta_-)\}~.
        \end{equation*}
        \end{example}
        Hence, Proposition \ref{prop:rationale} permits to reformulate the nonparametric test problem \eqref{eq:2_sample_pb} in various ways, as follows for instance:
        \begin{equation}\label{eq:2_sample_pb_alt}
        \mathcal{H}_0:\;\; \auc^*=1/2 \quad \textit{vs.} \quad \mathcal{H}_1: \auc^*>1/2~,
        \end{equation}
        or, equivalently, as:
        \begin{equation}\label{eq:2_sample_pb_alt2}
        \mathcal{H}_0:\;\; W_{\phi}^*=\int_0^1\phi(u)\dd u \quad \textit{vs.} \quad  \mathcal{H}_1: \;\; W_{\phi}^*>\int_0^1\phi(u)\dd u~,
        \end{equation}
        for any given strictly increasing score generating function $\phi(u)$.
        It is noteworthy that the formulations above are \textit{unilateral}, in contrast with the %raw 
        classic rank-sum test in the univariate case: indeed, $G^*$ is always stochastically larger than $H^*$, whereas, given two arbitrary univariate probability distributions, one is not necessary stochastically larger than the other.

         As the optimal $\roc$ curve and its summaries, such as the quantities $\auc^*$ or $W^*_{\phi}$, are unknown in practice, we propose an approach for solving the two-sample problem  implemented in two steps. By splitting the samples $\{\bX_1,\;\ldots,\; \bX_n\}$ and $\{\bY_1,\;\ldots,\; \bY_m\}$ into two halves: 1) first, solve the bipartite ranking problem based on the first halves of the `positive' and `negative' samples producing a scoring function $\widehat{s}(z)$, as described in the preceding subsection; 2) then, perform a univariate rank-based test  on the remaining data samples mapped by the obtained scoring function $\widehat{s}(z)$.  The subsequent sections %of the present article
          provide both theoretical and empirical evidence that, beyond the fact that they are nearly unbiased, such testing procedures permit to detect very small deviations from the null assumption.

        \section{Ranking-based Rank Tests for the Two-Sample Problem}\label{sec:stattest}
        In this section, we describe at length the two-sample methodology foreshadowed by the observations made in the preceding section and  discuss its possible practical implementations. Its theoretical properties (level and power)  are next analyzed from a nonasymptotic angle under specific assumptions.
        
        \subsection{Method and Implementations}\label{subsec:method}
        We explain how the general idea sketched in sec. \ref{subsec:rationale} can be applied effectively, based on the observation of two independent $\iid$ samples $\{\mathbf{X}_1,\; \ldots,\; \mathbf{X}_n\}$ and $\{\mathbf{Y}_1,\; \ldots,\; \mathbf{Y}_m\}$ with $n,\; m\geq 1$. Let $\alpha\in (0,1)$ be the target level, \textit{i.e.}, the desired type-I error. As previously discussed, two ingredients are essentially involved in the testing procedure: 
        \begin{enumerate} 
        \item A bipartite ranking algorithm $\mathcal{A}:\mathcal{Z}^{n+m}\rightarrow \mathcal{S}_0$ operating on a class $\mathcal{S}_0\subset \S$  and assigning to any set of training observations $\mathscr{D}_{n,m}=\{\mathbf{x}_1,\; \ldots,\; \mathbf{x}_n\}\cup \{\mathbf{y}_1,\; \ldots,\; \mathbf{y}_m\}$  a scoring function $\mathcal{A}(\mathscr{D}_{n,m})$ in $\S_0$;
        \item A two-sample rank test
        $\Phi_{\alpha}^{\phi}:\;  (-\infty, +\infty]^{n+m}\rightarrow  \{0, 1\}
        $ of level $\alpha\in (0,1)$
        with outcome depending on  $\{\Rank(x_1),\; \ldots,\; \Rank(x_n)\}$ for any pooled univariate dataset 
        $\mathscr{D}_{n,m}= \{x_1,\; \ldots, \; x_n\}\cup \{ y_1,\; \ldots,\; y_m \}$ and score-generating function $\phi$.
        \end{enumerate}
        Equipped with these two components, the methodology, relying on the computation of a Ranking-based two-sample rank tests, is implemented in two main steps, as summarized in Fig. \ref{twostageproc}.

        \begin{figure}[t!]
        \fbox{\begin{minipage}{\textwidth} 
            \medskip
        \begin{center}
        {\large \textbf{Ranking-based Two-sample Rank Tests}}
        \end{center}		
        \begin{enumerate}		
            \item[]{\bf Input.} Two independent and $\iid$ samples $\{\bX_1,\; \ldots,\; \bX_n\}$ and $\{\bY_1,\; \ldots,\; \bY_m\}$ of sizes $n,\; m\geq 2$ and valued in $\mathcal{Z}$;  subsample sizes $n'<n$ and $m'<m$; bipartite ranking $\mathcal{A}$ algorithm operating on the class $\S_0$ of scoring functions defined on $\mathcal{Z}$; univariate two-sample rank test $\Phi_{\alpha}^{\phi}$ of level  $\alpha\in (0,1)$.\\

                \item[] {\bf 2-split trick.} Divide each of the original samples into two subsamples
                \begin{equation*}
            \{\bX_1,\; \ldots,\; \bX_{n'}\}	\cup \{\bX_{1+n'},\; \ldots,\; \bX_n\}
             \text{ and } \{\bY_1,\; \ldots,\; \bY_{m'}\}	\cup \{\bY_{1+m'},\; \ldots,\; \bY_m\}
                \end{equation*}
                \item {\bf Bipartite Ranking.} Run the bipartite ranking algorithm $\mathcal{A}$ based on the training data $\mathscr{D}_{n',m'}=\{\bX_1,\; \ldots,\; \bX_{n'}\}	\cup \{\bY_1,\; \ldots,\; \bY_{m'}\}	$, producing the scoring function 
                \begin{equation}\label{eq:step1}
                \widehat{s}%(z)
                =\mathcal{A}(\mathscr{D}_{n',m'})~.
                \end{equation}
                
                \item {\bf Univariate Rank Test.} 	Form the univariate samples
                $$	\{\widehat{s}(\bX_{1+n'}),\; \ldots,\; \widehat{s}(\bX_n)\} \text{ and } 	\{\widehat{s}(\bY_{1+m'}),\; \ldots,\; \widehat{s}(\bY_m)\}~,$$
                    the outcome of the test being finally determined by computing the binary quantity
                \begin{equation}\label{eq:step2} 
                \Phi_{\alpha}\left( \Rank(\widehat{s}(\mathbf{X}_{1+n'})),\; \ldots,\; \Rank(\widehat{s}(\mathbf{X}_n)) \right)~,
                \end{equation}
            where $\Rank(t)=\sum_{i=1+n'}^n\mathbb{I}\{\widehat{s}(\mathbf{X}_i) \leq t \}+\sum_{j=1+m'}^m\mathbb{I}\{\widehat{s}(\mathbf{Y}_j) \leq t \}$.
                \end{enumerate}
        \end{minipage} }
        \caption{Ranking-based two-sample rank test procedure.}\label{twostageproc}
        \end{figure}

        Before discussing at length the various forms the general principle described above may take, a few remarks are in order.
        \begin{remark}{\sc (Bipartite ranking algorithms)}\label{rem:BRalgo} As mentioned in sec. \ref{subsec:rationale}, the vast majority of bipartite ranking algorithms documented in the statistical learning literature solve $M$-estimation problems over specific classes $\S_0$ of scoring functions. The criterion one seeks to maximize is the $\auc$ or a (smoothed / concavified / penalized) variant,
            such as \eqref{eq:W_crit}, whose set of optimal elements coincide with a subset of $\S^*$. See for instance \cite{FISS03}, \cite{Rak04}, \cite{RCMS05}, \cite{Rud06} or \cite{lambdarank07}. Generalization results in the form of confidence upper bounds for the deficit of empirical maximizers have been established under various complexity assumptions for $\S_0$ in \cite{CLV08}, \cite{AGHHPR05}, \cite{CV07}, \cite{CDV09}, \cite{MW16} and \cite{CleLimVay21}. Stronger theoretical guarantees (\textit{i.e.} bounds for the $\sup$-norm deviation $\eqref{eq:sup_dist}$)  have also been established for alternative approaches, considering $\roc$ optimization as a continuum of cost-sensistive binary classification problems and combining $M$-estimation with nonlinear approximation methods, see \cite{CV09ieee}, \cite{CV10CA}, \cite{CVALT09} or \cite{CDV13}.
        \end{remark}
        \begin{remark}\label{rk:bipartite}{\sc ($2$-split trick)} As recalled above, nearly optimal scoring functions are generally learned by means of $M$-estimation techniques. Consequently, their dependence on the training observations may be complex and can hardly be explicit in general. For this reason,  a $2$-split trick is used in order to make the analysis of the fluctuations of the quantity \eqref{eq:step2} tractable. Hence, conditioned upon the subsamples used in the bipartite ranking step of the procedure, the functional \eqref{eq:step2} is a two-sample rank statistic. We also underline that the issue of choosing appropriately % the part of the data 
        the subsample dedicated to bipartite ranking and the one used for % devoted to 
        the rank test is as crucial from a practical perspective as difficult, insofar as it is hard to know in advance the complexity of the ranking task.  
        \end{remark}
        We now propose several ways of implementing the methodology summarized in Fig. \ref{twostageproc}, which will be next studied theoretically in specific situations and whose performance will be empirically  investigated at length in section \ref{sec:num}.
        \medskip
        
        \noindent {\bf Ranking-based two-sample linear rank tests.} The simplest implementation consists in considering a test based on a univariate two-sample linear rank statistic \eqref{eq:R_stat}, characterized by a given score-generating function $\phi$, see Definition \ref{def:R_stat}. As recalled in sec. \ref{subsec:back_unirankroc}, such a statistic is pivotal under the homogeneity assumption $\mathcal{H}_0$ in the univariate case. Its null probability distribution can easily be  tabulated, even in the case where $(n,\; m)$ takes very large values given the computing power now at disposal. For all $n,\; m\geq 1$ and any $\alpha\in (0,1)$, one may thus determine the quantile: 
        \begin{equation}\label{eq:Rstat_quantile}
        q^{\phi}_{n,\; m}(\alpha)
        =\inf_{t\geq 0}\left\{\mathbb{P}_{\mathcal{H}_0}\left\{ \frac{1}{n}\widehat{W}_{n,m}^{\phi}-\int_0^1\phi(u)\dd u \leq t \right\}\geq 1-\alpha  \right\}~,
        \end{equation}
        as well as the critical region:
        \begin{equation}\label{eq:Rstat_quantilereg}
        \left\{ \frac{1}{n}\widehat{W}_{n,m}^{\phi} >\int_0^1\phi(u)\dd u+ q^{\phi}_{n,\; m}(\alpha) \right\}
        \end{equation}
        occuring with probability less than $\alpha$ under $\mathcal{H}_0$ in the univariate case and defining the test at level $\alpha$:
        \begin{equation}
        \Phi^{\phi}_{\alpha}(\mathscr{D}_{n,m})=\mathbb{I}\left\{ \frac{1}{n}\widehat{W}_{n,m}^{\phi} > \int_0^1\phi(u)\dd u+ q^{\phi}_{n,\; m}(\alpha)  \right\},
        \end{equation}
        based on the univariate samples $\mathscr{D}_{n,m}=\{X_1,\; \ldots,\; X_n\} \cup  \{Y_1,\; \ldots,\; Y_m\}$.
        As discussed in sec. \ref{subsec:rationale},  only a unilateral test $\Phi_{\alpha}^{\phi}$ is relevant in the multivariate case, given the reformulations \eqref{eq:2_sample_pb_alt} or \eqref{eq:2_sample_pb_alt2} of the two-sample testing problem. This contrasts %in contrast to 
        with the univariate situation for which no bipartite ranking step is required.
        \cite{CleLimVay21} investigated  a natural bipartite ranking approach, consisting in maximizing a statistical version of the performance criterion \eqref{eq:W_crit} based on the (multivariate) training data $\mathscr{D}_{n',m'}$ over the class $\S_0$, \textit{i.e.} in  solving the optimization problem:
        \begin{equation}\label{eq:ERM_like}
        \max_{s\in \S_0} \; \widehat{W}_{n',m'}^{\phi}(s),
        \end{equation}
        where we set for any scoring function $s(z)$:
        \begin{equation}\label{eq:W_empr_crit}
        \widehat{W}_{n',m'}^{\phi}(s)=\sum_{i=1}^{n'}\phi\left(\frac{\Rank(s(\bX_i))}{N'+1} \right),
        \end{equation}
        with $N'=n'+m'$, the quantity $\Rank(s(\bX_i))/(N'+1)$ being a natural empirical counterpart of $F_s(s(\bX_i))$ for $i=1,\; \ldots,\; n'$. \cite{CleLimVay21} studied the generalization capacity of solutions of the problem \eqref{eq:ERM_like} and (gradient ascent based) optimization strategies for approximately solving \eqref{eq:ERM_like}. Hence, by considering a solution $\widehat{s}$ of \eqref{eq:ERM_like} obtained at %provided that a solution $\widehat{s}$ of \eqref{eq:ERM_like} has been obtained as the outcome of
         \textit{Step 1}, the test built at \textit{Step 2} based on the scored two samples:
        \begin{equation}
        \mathscr{D}_{n'',m''}(\widehat{s})=
                \{\widehat{s}(\bX_{1+n'}),\; \ldots,\; \widehat{s}(\bX_n)\} 
                \cup	\{\widehat{s}(\bY_{1+m'}),\; \ldots,\; \widehat{s}(\bY_m)\}~,
        \end{equation}
        with $n'' = n-n'$ and $m''=m-m'$, writes
        \begin{equation}\label{eq:rb2sample_test}
        \Phi^{\phi}_{\alpha}(\mathscr{D}_{n'',m''}\left(\widehat{s})\right)
        =\mathbb{I}\left\{\frac{1}{n''}\widehat{W}^{\phi}_{n'',m''}(\widehat{s})>\int_0^1\phi(u)\dd u+q_{n'',m''}^{\phi}(\alpha)\right\}.
        \end{equation}
        Under specific assumptions,  in particular related to the class $\S_0$ and the score-generating function $\phi$, the nonasymptotic properties of the test \eqref{eq:rb2sample_test} are investigated in sec. \ref{subsec:theory}. % from a nonasymptotic perspective.
        
        \begin{remark}{\sc (Combining multiple ranking-based two-sample linear rank tests)}
        As highlighted in \cite{CleLimVay21}, depending on the score-generating function $\phi$ chosen, the quantity $W^*_{\phi}$ summarizes $\roc^*$ in a certain fashion. As one hardly knows in advance which $\phi$ may capture best the way $\roc^*$ possibly deviates from the diagonal in practice (through the scalar quantity $W^*_{\phi}-\int_0^1\phi(u)du\geq 0$), a natural strategy could consist in performing simultaneously  several ranking-based two-sample linear rank tests, implementing popular principles in multiple hypothesis testing and ensemble learning. For instance, $K\geq 1$ score generating functions $\phi_1,\; \ldots,\; \phi_K$ can be considered, together with levels $\alpha_1,\; \ldots,\; \alpha_K$ to form the ensemble of tests $\{\Phi^{\phi_k}_{\alpha_k}: k=1,\; \ldots,\; K\}$ and the combination
        \begin{equation}\label{eq:comb}
        \sup_{1\leq k\leq K} \Phi^{\phi_k}_{\alpha_k} \left(\mathscr{D}_{n'',m''}( \widehat{s})\right).
        \end{equation}
        Although one may guarantee that the type I error of \eqref{eq:comb} is less than $\alpha$ (by choosing the $\alpha_k$'s so that $\sum_{k=1}^K\alpha_k=\alpha$), one faces significant difficulties when investigating its properties, due to the dependence of the tests combined and of the different $\phi_k$. The study of such approaches are thus left for further research.
        \end{remark}

        \noindent {\bf Ranking-based tests in the $\roc$ space.}  As underlined in sec. \ref{subsec:back_unirankroc}, the empirical $\roc$ curve is itself a two-sample rank statistic (and consequently pivotal) under $\mathcal{H}_0$. Hence, the test involved at \textit{Step 2} can be based on a confidence region $\mathcal{R}_{n'',m''}(\alpha)\subset \mathcal{C}_{n'',m''}$ at level $1-\alpha$ for the empirical $\roc$ curve based on univariate samples of sizes $n''$ and $m''$ under $\mathcal{H}_0$. Using the scoring function $\widehat{s}$ produced at \textit{Step 1}, one plots the empirical $\roc$ curve related to the univariate distributions:
        \begin{equation}
        \widehat{H}_{\widehat{s},n''}=\frac{1}{n''}\sum_{i=1+n'}^n\delta_{\widehat{s}(\bX_{i})}   \text{ and } \widehat{G}_{\widehat{s},m''}=\frac{1}{m''}\sum_{j=1+m'}^n\delta_{\widehat{s}(\bY_{j})}
        \end{equation}
        and the critical region then writes
        \begin{equation}\label{eq:crit_region}
        \left\{ \roc_{\widehat{H}_{\widehat{s},m''},\; \widehat{G}_{\widehat{s},n''}} \notin \mathcal{R}_{n'',m''}(\alpha)  \right\}.
        \end{equation}
        
        As pointed out in sec. \ref{subsec:back_unirankroc}, given a (pseudo-)metric in the $\roc$ space (\textit{e.g.} the $\sup$ norm), the confidence region $\mathcal{R}_{n'',m''}(\alpha)$ could naturally correspond to the set of piecewise linear curves in $\mathcal{C}_{n'',m''}$ at a distance smaller than a specific threshold $t_{\alpha}$ from the diagonal (and possibly above the diagonal, just like $\roc^*$). In the case of the $L_1$-distance, this implementation coincides with the previous one when choosing $\phi(u)=u$. 
        
        \subsection{Theoretical Guarantees - Nonasymptotic Error Bounds}\label{subsec:theory}
        The properties of the ranking-based two-sample linear rank tests  described in sec. \ref{subsec:method} are now analyzed from a nonasymptotic perspective.  Let $\alpha\in (0,1)$. The test \eqref{eq:rb2sample_test} is of level $\alpha$ by construction as proved by  Theorem \ref{thm:typeI}  below.
        
        \begin{theorem}\label{thm:typeI}{\sc(Type-I error bound)} Let $\phi(u)$ be a score-generating function and $n,\; m\geq 2$. Fix $\alpha\in (0,1)$.
        Under the null hypothesis $\mathcal{H}_0$, the type-I error of the test \eqref{eq:rb2sample_test} is less than $\alpha$
        \begin{equation}
        \mathbb{P}_{\mathcal{H}_0}\left\{ \Phi^{\phi}_{\alpha}\left( \mathscr{D}_{n'',m''}(\widehat{s}) \right) = 1  \right\}  \leq \alpha~,
        \end{equation}
        for all $1\leq n''< n$ and $1\leq m'' < m$.
        \end{theorem}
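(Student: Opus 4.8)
The plan is to exploit the $2$-split structure of the procedure. The scoring function $\widehat{s}=\mathcal{A}(\mathscr{D}_{n',m'})$ is built \emph{solely} from the first halves $\mathscr{D}_{n',m'}=\{\bX_1,\ldots,\bX_{n'}\}\cup\{\bY_1,\ldots,\bY_{m'}\}$, whereas the rank statistic $\widehat{W}^{\phi}_{n'',m''}(\widehat{s})$ entering \eqref{eq:rb2sample_test} is evaluated on the \emph{disjoint} second halves. Since the original samples are independent and \iid, the second-half observations $\bX_{1+n'},\ldots,\bX_n$ and $\bY_{1+m'},\ldots,\bY_m$ are independent of $\mathscr{D}_{n',m'}$. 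I would therefore condition on $\mathscr{D}_{n',m'}$, which freezes $\widehat{s}$ into a fixed deterministic scoring rule, establish the level control conditionally, and then integrate out by the tower property.

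Conditionally on $\mathscr{D}_{n',m'}$, under $\mathcal{H}_0$ one has $H=G$, so the $N''=n''+m''$ second-half observations are \iid\ from the common law $H$, and their images under the frozen map $\widehat{s}$ are \iid\ from the common univariate pushforward $H_{\widehat{s}}=G_{\widehat{s}}$. The functional $\widehat{W}^{\phi}_{n'',m''}(\widehat{s})=\sum_{i}\phi\big(\Rank(\widehat{s}(\bX_i))/(N''+1)\big)$ depends on this scored pooled sample only through the ranks occupied by the positive ($\bX$-)instances. By exchangeability of the \iid\ pooled scores, the set of these positive ranks is uniformly distributed over the $\binom{N''}{n''}$ subsets of $\{1,\ldots,N''\}$, exactly as in the univariate homogeneity setting; hence the conditional law of $\widehat{W}^{\phi}_{n'',m''}(\widehat{s})$ given $\mathscr{D}_{n',m'}$ coincides with the distribution-free null law used to define the quantile $q^{\phi}_{n'',m''}(\alpha)$ in \eqref{eq:Rstat_quantile}. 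By the very definition of $q^{\phi}_{n'',m''}(\alpha)$ as the $(1-\alpha)$-quantile of $\tfrac{1}{n''}\widehat{W}^{\phi}_{n'',m''}-\int_0^1\phi(u)\dd u$ under $\mathcal{H}_0$, the conditional rejection event is precisely that this centered statistic exceeds its own $(1-\alpha)$-quantile, so
\[
\mathbb{P}_{\mathcal{H}_0}\!\left\{\Phi^{\phi}_{\alpha}\big(\mathscr{D}_{n'',m''}(\widehat{s})\big)=1 \;\Big|\; \mathscr{D}_{n',m'}\right\}\le \alpha .
\]
Taking expectation over $\mathscr{D}_{n',m'}$ then yields the unconditional bound, uniformly over the deterministic split sizes $n''<n$ and $m''<m$, and crucially \emph{without} any dependence on the (complicated) distribution of $\widehat{s}$.

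The delicate point is the exchangeability step, more precisely the treatment of ties. Although $H$ is continuous so that the raw multivariate observations are \as\ distinct, the one-dimensional pushforward $H_{\widehat{s}}$ may carry atoms when $\widehat{s}$ is constant on sets of positive $H$-mass, so equal scores can occur with positive probability and break the clean identification with the tie-free reference law. One must then either restrict to scoring classes $\S_0$ whose elements induce continuous pushforwards, or invoke the mean-rank convention for ties recalled in sec.~\ref{subsec:back_unirankroc}, under which the statistic remains a function of the exchangeable rank pattern so that the conditional null law still matches the reference law behind $q^{\phi}_{n'',m''}(\alpha)$. Granting this, the argument is otherwise an exact (non-conservative) conditional level computation and requires no concentration inequality at all.
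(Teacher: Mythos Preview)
Your argument is correct and is exactly the conditioning argument the paper uses: condition on the training data $\mathscr{D}_{n',m'}$ so that $\widehat{s}$ becomes deterministic, observe that under $\mathcal{H}_0$ the conditional law of $\tfrac{1}{n''}\widehat{W}^{\phi}_{n'',m''}(\widehat{s})$ is the pivotal null law defining $q^{\phi}_{n'',m''}(\alpha)$, hence the conditional rejection probability is at most $\alpha$, and then take expectations. Your explicit discussion of the exchangeability step and of ties in the pushforward is in fact more detailed than the paper's one-line treatment.
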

        \noindent The proof directly follows from a conditioning argument detailed in Appendix section \ref{app:proofs}. The distribution of \eqref{eq:R_stat} can be easily tabulated and the quantile \eqref{eq:Rstat_quantile}  numerically computed for any $n,\; m\geq 1$, see sec. \ref{subsec:method}. Consider the following assumption related to the smoothness of the score-generating function $\phi$.
        
        \begin{hyp}\label{hyp:phic2}
            The score-generating function $\phi : [0,1] \mapsto \RR$, is nondecreasing and twice continuously differentiable.
        \end{hyp}
        \noindent Under  Assumption \ref{hyp:phic2}, the following result provides an upperbound for  \eqref{eq:Rstat_quantile} that decays to $0$, as simultaneously $n,\; m$ tend to infinity (so that $n/(n+m)\rightarrow p$ at the rate $1/(n+m)$ for $p\in (0,1)$).
        
        \begin{proposition}\label{prop:rate_quantile} Suppose Assumption \ref{hyp:phic2} fulfilled. Let $p\in (0,1)$ and $N\geq 1/p$. Set $n = \lfloor pN \rfloor$ and $m = \lceil (1-p)N \rceil = N - n$. Then, for any $\alpha\in (0,1)$, we have:
        \begin{equation}
         q^{\phi}_{n,m}(\alpha) \leq \sqrt{\frac{\log(18/\alpha)}{CN}}~,
        \end{equation}
        where 
        $C=8^{-1}\min\left(p/\lVert \phi \rVert_{\infty}^2,  (p \lVert \phi' \rVert_{\infty}^2)^{-1}, ((1-p) \lVert \phi' \rVert_{\infty}^2)^{-1} \right) $.
        \end{proposition}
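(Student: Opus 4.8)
The plan is to obtain the quantile bound by a direct inversion of the exponential tail inequality for two-sample linear rank statistics stated in Theorem~\ref{thm:tail}, after observing that the quantity controlled in the definition \eqref{eq:Rstat_quantile} of $q^{\phi}_{n,m}(\alpha)$ is exactly the upper tail of a centered, pivotal statistic. First I would recall that, under $\mathcal{H}_0$, the statistic $\frac{1}{n}\widehat{W}^{\phi}_{n,m}$ is distribution-free and that its limiting mean equals $W_{\phi}=\int_0^1\phi(u)\dd u$; consequently the object appearing in \eqref{eq:Rstat_quantile} is the centered statistic $\frac{1}{n}\widehat{W}^{\phi}_{n,m}-\int_0^1\phi(u)\dd u$, whose fluctuations Theorem~\ref{thm:tail} is precisely designed to bound (any finite-sample bias relative to $\int_0^1\phi(u)\dd u$, coming from the $N+1$ normalization and discreteness, being absorbed into its constants).

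Next I would check that Assumption~\ref{hyp:phic2} makes the constant $C$ well defined. Since $\phi$ and $\phi'$ are continuous on the compact interval $[0,1]$, they are bounded, so $\lVert\phi\rVert_{\infty}$ and $\lVert\phi'\rVert_{\infty}$ are finite and $C>0$. I would then invoke Theorem~\ref{thm:tail} in its upper-tail form: with $n=\lfloor pN\rfloor$ and $m=N-n$, it gives, for every $t\geq 0$,
\begin{equation*}
\mathbb{P}_{\mathcal{H}_0}\left\{\frac{1}{n}\widehat{W}^{\phi}_{n,m}-\int_0^1\phi(u)\dd u> t\right\}\leq 18\exp\left(-CNt^2\right)~.
\end{equation*}
The shape of $C$, namely the factor $8^{-1}$ and the minimum of three terms, records that this tail bound is itself produced by a union bound over three Hoeffding-type contributions stemming from the linearization $\phi(\widehat{F}_{N})\approx \phi(F)+\phi'(F)(\widehat{F}_{N}-F)$ with $F=pG+(1-p)H$: one term controlling $\frac{1}{n}\sum_i\phi(F(X_i))$ through $\lVert\phi\rVert_{\infty}$ over the $n\simeq pN$ positive observations, and two terms controlling the empirical-process remainder through $\lVert\phi'\rVert_{\infty}$, weighted respectively by $p$ and $1-p$ according to the positive and negative subsamples; the prefactor $18$ is the sum of the prefactors of these contributions.

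Finally I would invert the inequality. Choosing $t^{\star}=\sqrt{\log(18/\alpha)/(CN)}$ makes the right-hand side above equal to $\alpha$, so that
\begin{equation*}
\mathbb{P}_{\mathcal{H}_0}\left\{\frac{1}{n}\widehat{W}^{\phi}_{n,m}-\int_0^1\phi(u)\dd u\leq t^{\star}\right\}\geq 1-\alpha~.
\end{equation*}
Hence $t^{\star}$ belongs to the set over which the infimum in \eqref{eq:Rstat_quantile} is taken, and the very definition of the quantile yields $q^{\phi}_{n,m}(\alpha)\leq t^{\star}=\sqrt{\log(18/\alpha)/(CN)}$, which is the claim.

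I expect no serious obstacle beyond constant bookkeeping: essentially all the analytic work is concentrated in Theorem~\ref{thm:tail}, and the only care required here is to verify that its exponential rate is exactly the $C$ stated in the proposition and that its prefactor is $18$, so that the elementary monotone inversion of the exponential reproduces the advertised $\log(18/\alpha)$ numerator; the remaining step, the infimum characterization of the quantile, is immediate.
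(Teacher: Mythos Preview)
Your proposal is correct and follows essentially the same approach as the paper: apply Theorem~\ref{thm:tail} under $\mathcal{H}_0$ (where $W_{\phi}=\int_0^1\phi(u)\dd u$), invert the exponential tail bound by choosing $t^{\star}=\sqrt{\log(18/\alpha)/(CN)}$, and conclude via the infimum definition of the quantile. The paper's own proof is in fact just a two-line version of exactly this argument, without your additional commentary on the origin of the constants.
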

        \noindent Refer to  Appendix \ref{app:proofprop8} for the proof. It straightforwardly results from the tail probability bound for two-sample linear rank statistics established in  Appendix \ref{app:proofs}, Theorem \ref{thm:tail}, the bound being of the expected order given the CLT satisfied by \eqref{eq:R_stat}, refer to \textit{e.g.} Theorem 13.25 in \cite{vdV98}.\\
        %\medskip
        
         In addition, the type-II error can also be controlled using nonasymptotic results proved in \cite{CleLimVay21}. They provided probability inequalities for the maximal deviations between the empirical criterion \eqref{eq:W_empr_crit} and the theoretical one \eqref{eq:W_crit},  over a class $\S_0$ of scoring functions of controlled complexity, and upper confidence bounds for the deficit of $W_{\phi}$-ranking performance of solutions of problem \eqref{eq:ERM_like}. The latter results rely on linearization techniques applied to the statistic \eqref{eq:W_empr_crit} and concentration bounds for two-sample $U$-processes. As will be shown below and given $\widehat{s}\in \S_0$, these results permit to establish tail bounds for the quantity
        
        \begin{multline}\label{eq:dec1}
        \frac{1}{n''}\widehat{W}^{\phi}_{n'',m''}(\widehat{s})-\int_{0}^1\phi(u)\dd u
        =\left\{ \frac{1}{n''}\widehat{W}^{\phi}_{n'',m''}(\widehat{s})-W_{\phi}(\widehat{s})\right\}
        + \left\{ W_{\phi}(\widehat{s})  - W^*_{\phi}\right\}\\  +\left\{ W^*_{\phi} - \int_{0}^1\phi(u)\dd u \right\}.
        \end{multline}
        
        \noindent The (non-negative) third term on the right hand side of Eq. \eqref{eq:dec1} quantifies the deviation from the homogeneity hypothesis $\mathcal{H}_0$, while the second one corresponds to the error inherent in the bipartite ranking step (\textit{Step 1} in Fig. \ref{twostageproc}). The following additional assumptions are required to apply those results.
        
        \begin{hyp}\label{hyp:sabscont}
        Let $M>0$.	For all $s\in \S_0$, the random variables $s(\bX)$ and $s(\bY)$ are continuous, with density functions that are twice differentiable and have Sobolev $\mathcal{W}^{2,\infty}$-norms\footnote{Recall that the Sobolev space $\mathcal{W}^{2,\infty}$ is the space of all Borelian functions $h:\mathbb{R}\rightarrow \mathbb{R}$ such that $h$ and its first and second order weak derivatives $h'$ and $h''$ are bounded almost-everywhere. Denoting by $\vert\vert.\vert\vert_{\infty}$ the norm of the Lebesgue space $L_{\infty}$ of Borelian and essentially bounded functions, $\mathcal{W}^{2,\infty}$ is a Banach space when equipped with the norm $\vert\vert h\vert\vert_{2,\infty}=\max\{\vert\vert h\vert\vert_{\infty},\; \vert\vert h'\vert\vert_{\infty},\; \vert\vert h''\vert\vert_{\infty}   \}$.} bounded by $M<+\infty$.
        \end{hyp}

        \begin{hyp}\label{hyp:VC}
            The class of scoring functions $\S_0$ is a {\sc VC} class of finite {\sc VC} dimension $\V<+\infty$.
        \end{hyp}
        
        \noindent We refer to section $2.6.2$ in \cite{vdVWell96} for the definition of  \textit{Vapnik–Chervonenkis} (\vc) classes of functions.
        \cite{CleLimVay21} precisely proved, under the assumptions above and when $n'/(n'+m')\sim n''/(n''+m'')\sim p$, that the deficit of $W_{\phi}$-ranking performance of solutions of empirical maximizers $\widehat{s}$, \textit{i.e.}, the second term on the right hand side of \eqref{eq:dec1}, is of order $O_{\mathbb{P}}(1/\sqrt{n'\wedge m'})$ (when neglecting the possible bias model, arising from the fact that $\S_0\cap \S^*$ might be empty, see Corollary 7 therein), and that the first term is of order $O_{\mathbb{P}}(1/\sqrt{n''\wedge m''})$ (see the argument of Theorem 5 therein).\\
        
         Considering the quantity $W^*_{\phi}-\int_{0}^1\phi(u)\dd u$ to describe the departure from the null assumption $\mathcal{H}_0$ (see Proposition \ref{prop:rationale}) and the bias model $W^*_{\phi}-\sup_{s\in \S_0}W_{\phi(s)}$ inherent in the bipartite ranking step (when formulated as empirical $W_{\phi}$-ranking performance maximization), we introduce the two (nonparametric) classes of pairs of probability distributions on $\mathcal{Z}$.
        
        \begin{definition}\label{def:depart} Let $\varepsilon>0$ and $\phi(u)$ be a score-generating function.
            We denote by $\mathcal{H}_1(\varepsilon)$ the set of alternative hypotheses corresponding to all pairs $(H,G)$ of probability distributions on $\mathcal{Z}$ such that
        \begin{equation*}
        W^*_{\phi}-\int_{0}^1\phi(u)\dd u \geq \varepsilon~,
        \end{equation*}
        where we recall $W_{\phi}^*= W_{\phi}(s^*) =  W^*_{\phi}(H,G) $ for any $s^*\in \S^*$.
        \end{definition}
        
        \begin{definition}\label{def:bias}
        Let $\delta>0$, $\phi(u)$ be a score-generating function and $\S_0$ be a class of scoring functions. We denote by $\mathcal{B}(\delta)$ the set of  all pairs $(H,G)$ of probability distributions on $\mathcal{Z}$ such that
        \begin{equation*}
        W^*_{\phi}- \sup_{s\in \S_0}W_{\phi}(s) \leq \delta~.
        \end{equation*}
        \end{definition}
        The theorem below provides a rate bound for the type-II error of the ranking-based rank test \eqref{eq:rb2sample_test} of size $\alpha$, depending on the sizes $N'$ and $N''=N-N'$ of the two pooled samples involved in the procedure, $\ie$, that are $\resp$ used for bipartite ranking and for performing the rank test based on the scoring function learned.
        
        \begin{theorem}\label{thm:typeII} {\sc (Type-II error bound)} Let $\phi(u)$ be a score-generating function and $\varepsilon>\delta>0$. Fix $\alpha\in (0,1)$. Suppose that Assumptions \ref{hyp:phic2}-\ref{hyp:VC} are fulfilled. Let $p \in (0,1)$ such that $N'\wedge N''\geq 1/p$. Set $n'= \lfloor p N' \rfloor$ and $m' = \lceil (1-p)N' \rceil = N' - n'$, as well as  $n''= \lfloor pN'' \rfloor$ and $m'' = \lceil (1-p)N'' \rceil = N'' - n''$.j
        Then, there exist constants $C_1$ and $C_2 \geq 24$, depending on $(\phi, \; \V)$, such that the  type-II error of the test \eqref{eq:rb2sample_test} is uniformly bounded as follows:
        \begin{multline}\label{eq:typeII}
        \sup_{(H,G)\in \mathcal{H}_1(\varepsilon)\cap \mathcal{B}(\delta)}\mathbb{P}_{H,G}\left\{ \Phi^{\phi}_{\alpha}(\mathscr{D}_{n'',m''}\left(\widehat{s})\right) =0  \right\} 
         \leq  18 \exp\left\{-\frac{CN''}{16}\left( \varepsilon - \delta)\right)^2\right\} \\
         + C_2 
          \exp\left\{-\frac{N'}{8C_2}p(p\wedge  (1-p))( \varepsilon - \delta)\log\left(1+\frac{\varepsilon-\delta}{32C_1(p\wedge  (1-p))}\right) \right\}~,
        \end{multline}
        as soon as $N''\geq 4\log(18/\alpha)/(C(\varepsilon-\delta)^2)$ and $N'\geq 16C_1^2/(p(\varepsilon-\delta)^2)$, the constant $C$ being that involved in Proposition \ref{prop:rate_quantile}, the $C_j$'s those involved in Theorem \ref{thm:concentration}.
        \end{theorem}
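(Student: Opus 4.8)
The plan is to bound the type-II error by controlling the three-term decomposition in Eq.~\eqref{eq:dec1}, combining the quantile bound from Proposition~\ref{prop:rate_quantile} with the generalization guarantees of \cite{CleLimVay21} recalled just above. The test \eqref{eq:rb2sample_test} fails to reject (type-II event) precisely when
$$
\frac{1}{n''}\widehat{W}^{\phi}_{n'',m''}(\widehat{s})-\int_0^1\phi(u)\dd u \;\leq\; q_{n'',m''}^{\phi}(\alpha)~.
$$
On the event defining the alternative, $W^*_\phi - \int_0^1\phi(u)\dd u \geq \varepsilon$ (third term) and, using the bias control $(H,G)\in\mathcal{B}(\delta)$, the ranking deficit $W^*_\phi - W_\phi(\widehat{s})$ will be shown small with high probability. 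So I would first write the left-hand side of \eqref{eq:dec1} as a sum of a centered fluctuation term, a (signed) ranking-deficit term, and the separation $\varepsilon$, then argue that whenever the fluctuation and the deficit are each not too negative, the statistic exceeds the threshold $\int_0^1\phi + q_{n'',m''}^{\phi}(\alpha)$ and the test correctly rejects.

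Concretely, I would decompose the failure probability by a union bound into two bad events. \textbf{First}, the event that the empirical fluctuation $\tfrac{1}{n''}\widehat{W}^{\phi}_{n'',m''}(\widehat{s})-W_\phi(\widehat{s})$ (the first bracket in \eqref{eq:dec1}) deviates downward by more than roughly $(\varepsilon-\delta)/2$; conditionally on the first-stage data $\mathscr{D}_{n',m'}$, the scoring function $\widehat{s}$ is frozen, so this is exactly a two-sample linear rank statistic to which the exponential tail bound of Theorem~\ref{thm:tail} applies, yielding a term of the form $18\exp\{-CN''(\varepsilon-\delta)^2/16\}$ matching the first summand of \eqref{eq:typeII}. \textbf{Second}, the event that the ranking deficit $W^*_\phi - W_\phi(\widehat{s})$ exceeds the remaining budget; since $(H,G)\in\mathcal{B}(\delta)$ gives $W^*_\phi-\sup_{s\in\S_0}W_\phi(s)\leq\delta$, the excess over $\delta$ is bounded by the uniform deviation of the empirical criterion over $\S_0$, controlled by the $U$-process concentration inequality of Theorem~\ref{thm:concentration} under Assumptions~\ref{hyp:sabscont}--\ref{hyp:VC}, producing the second summand with its $C_1,C_2$ constants. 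I would also need the quantile $q_{n'',m''}^{\phi}(\alpha)$ to be negligible against $\varepsilon-\delta$, which is exactly guaranteed by Proposition~\ref{prop:rate_quantile} once $N''\geq 4\log(18/\alpha)/(C(\varepsilon-\delta)^2)$, so that $q_{n'',m''}^{\phi}(\alpha)\leq(\varepsilon-\delta)/2$; the side condition $N'\geq 16C_1^2/(p(\varepsilon-\delta)^2)$ ensures the first-stage sample is large enough for the deficit bound to be effective.

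The bookkeeping that makes everything fit is the apportionment of the gap $\varepsilon-\delta$: I would split it so that the quantile threshold absorbs one part (via Proposition~\ref{prop:rate_quantile}) and the two concentration events each control a share, taking care that the constants $C$, $C_1$, $C_2\geq24$ line up with the stated exponents. \textbf{The main obstacle} I anticipate is the second-stage concentration step: because $\widehat{s}$ is data-dependent, the deficit term $W_\phi(\widehat{s})-W^*_\phi$ is not a sum of independent quantities, and one must pass through the uniform bound over the VC class $\S_0$ and the linearization of the statistic \eqref{eq:W_empr_crit} as done in \cite{CleLimVay21}. Invoking Theorem~\ref{thm:concentration} correctly — in particular tracking how its Bernstein-type exponent with the $\log(1+\cdots)$ factor emerges from the $U$-process variance control, and verifying that conditioning on $\mathscr{D}_{n',m'}$ for the first bracket is compatible with the unconditional deficit bound for the second — is where the delicate work lies. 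The remaining pieces (the rank-statistic tail and the quantile decay) are essentially plug-ins from results already stated.
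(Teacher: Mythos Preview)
Your proposal is correct and follows essentially the same route as the paper's proof: decompose via \eqref{eq:dec1}, absorb the quantile using Proposition~\ref{prop:rate_quantile} and the condition on $N''$, bound the ranking deficit by $\delta$ plus twice the uniform deviation over $\S_0$ (the standard ERM inequality, since $\widehat{s}$ maximizes the empirical criterion) controlled by Theorem~\ref{thm:concentration}, and handle the fluctuation term by conditioning on $\mathscr{D}_{n',m'}$ and applying Theorem~\ref{thm:tail}. The only minor bookkeeping point is that the paper splits the remaining budget $(\varepsilon-\delta)/2$ equally between the two bad events, giving threshold $(\varepsilon-\delta)/4$ each---your ``roughly $(\varepsilon-\delta)/2$'' for the fluctuation is slightly off, but the final exponent $CN''(\varepsilon-\delta)^2/16$ you write down corresponds exactly to the $(\varepsilon-\delta)/4$ split.
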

        \noindent Refer to  Appendix \ref{app:prooftypeII} for the detailed proof.  In view of Definitions \ref{def:depart} and \ref{def:bias}, by requiring $\varepsilon>\delta$, we consider here pairs $(H,G)$ for which the class $\mathcal{S}_0$ induces a model bias in the bipartite ranking problem that is sufficiently small, namely smaller than the minimum departure $\varepsilon$ from the null assumption. The bound stated above reveals that, when $\varepsilon-\delta>0$ is held fixed,  the type-II error uniformly vanishes exponentially fast as $N'$ and $N''$ simultaneously increase to infinity: the second term on the right hand side of the bound \eqref{eq:typeII} is inherent in the learning stage, while the first term corresponds to a bound for the type-II error of a univariate rank test. Hence, for $N'$ large enough, the type-II error of the ranking-based rank test is of the same order of magnitude as that of a rank test based on the univariate samples $\{\Psi(\bX_{1+n'}),\; \ldots,\; \Psi(\bX_n)\}$ and $\{\Psi(\bY_{1+m'}),\; \ldots,\; \Psi(\bY_m)\}\}$.  The two bounds exhibit a different behavior as the departure level $\varepsilon$ from the null assumption tends to $0$ or to $+\infty$: when the pooled sample sizes $N'$ and $N''$ are held fixed, the bound related to the type-II error of the univariate rank test becomes negligible compared to the learning bound as $\varepsilon$ increases to infinity, while it deteriorates faster than it when  $\varepsilon$ decays to $0$.
        Observe also that in the result stated above, the fraction $p$ of positive instances within the pooled sample involved in the bipartite ranking step  (\textit{Step 1}) is the same as that within the pooled sample used in the rank test (\textit{Step 2}), the same rank statistic $\widehat{W}^{\phi}$ being used for both steps in the present analysis.  However, if the choice of $N''$ can be guided by the value of the size $\alpha$ considered in view of the discrete pivotal conditional distribution of the test statistic used, the bound established in Theorem \ref{thm:typeII}  does not permit to tune in practice the amount of the $N$ training observations that should be used for each \textit{Step 1} and \textit{Step 2}. 
        As will be discussed in section \ref{sec:num}, simply dividing the samples into two parts, with a larger part of the pooled dataset kept for the learning stage (\textit{e.g.} $N'\sim 4N/5$)  may permit to get satisfactory results.

        \begin{remark}{\sc (Relation to minimax testing)} The proposed definition  for the alternative assumption $\cH_1$ in the sense of the $W_{\phi}$-criterion is novel compared to the nonparametric minimax testing literature. The latter formulates  $\cH_1$  by measuring the dissimilarity of the underlying distributions using a distance $\wrt$ a particular metric, see \textit{e.g.}, \cite{LWCarpSri22}  for local minimax separation rate  defined  by  $L_1$-norm for discrete distributions, \cite{Carp18} for $L_2$-norm in sparse linear regression. In fact, Definition \ref{def:depart} offers a similar interpretation when consider the $W_{\phi}$-criterion in the $\roc$ space, particularly highlighted when choosing $\phi(u)=u$ by Eq. \eqref{eq:aucdist} and Remark $5$ in \cite{CleLimVay21}. Thus, one could formulate minimax properties of the type-II error of the present statistic, see discussion in \cite{LimniosPhD}, Chap. 6.4.2 therein. 
        \end{remark}

         We finally point out that  similar results to Theorem \ref{thm:typeII} can be established for the ranking-based test corresponding to the critical region \eqref{eq:crit_region} in the $\roc$ space when the latter is based on the $\sup$-norm,  exactly in the same way except that  bounds (in $\sup$ norm) established in \cite{CV10CA} or in \cite{CV09ieee} must be used instead of those in \cite{CleLimVay21}.

        \section{Illustrative Numerical Experiments}\label{sec:num}
        In this section, we illustrate the methodology previously described, and analyzed in the case of $W_{\phi}$-ranking performance optimization, by displaying the results of various numerical experiments, based on synthetic datasets. Beyond the empirical evidence of its capacity to detect (small) departures from the homogeneity assumption successfully, compared to certain popular two-sample tests in the multivariate setup, they aim at showing the impact of the various ingredients involved, the score generating function $\phi$ chosen and the bipartite ranking algorithm used especially.
        We first detail the variants of the ranking-based approach to the two-sample problem examined in the experiments. Next the two-sample problems considered are specified and the empirical results are finally summarized and discussed. Additional experimental results are postponed to the Appendix section  \ref{sec:app_expe}. All the Python codes used to carry out these experiments are available online at \url{https://github.com/MyrtoLimnios/twosampleranktest} for reproducibility purpose. 
        
        \subsection{Practical Implementations - Hyperparameters}\label{subsec:numimpl}
        
        The two-stage testing procedure introduced in subsection \ref{subsec:method} can be implemented in various ways, depending on the bipartite ranking algorithm $\A$ used and the score-generating function chosen.
        \medskip
        
        \noindent{\bf Bipartite ranking algorithms (\textit{Step 1}).} As recalled in subsection \ref{subsec:rationale} (see also Remark \ref{rk:bipartite}), most techniques documented in the machine learning literature  formulate bipartite ranking as a \textit{pairwise classification} problem, see \cite{CLV08} and  \cite{MW16}. Hence, popular classification algorithms (\textit{e.g.} SVM, neural networks, boosting) can be applied to pairs $(Z,Z')$ to which label $-1$ is assigned when $Z$ and $Z'$ are drawn from the same multivariate distribution ($H$ or $G$) and label $+1$ otherwise. In particular, for the present experiments we implemented the linear version of  RankSVM with $L_2$ loss (\texttt{rSVM2}, see \cite{JoaRSVM02}), RankNN (\texttt{rNN}, see  \cite{ranknet05}) and 
         RankBoost (\texttt{rBoost}, see \cite{FISS03}). Beyond the pairwise approach, recursive partitioning methods based on oriented binary trees have been proposed to optimize an adaptively discretized version of the $\roc$ curve (with remarkable generalization guarantees in $\sup$ norm), see \cite{CV09ieee} and \cite{CDV09} (see also \cite{CV10CA} and \cite{CVALT09}). Here we have implemented the stabler ensemble learning version referred to as Ranking Forest (\texttt{rForest}, see \cite{CDV13}), whose practical performance is assessed in \cite{CDV13bis} in particular. In addition, as explained in \cite{CleLimVay21} (see section 4 therein), a regularized version of the empirical $W_{\phi}$-ranking performance criterion can be obtained by a kernel smoothing procedure and maximized through a (stochastic) gradient ascent algorithm,  akin to that theoretically analyzed in subsection \ref{subsec:theory}.
         \medskip
        
        \noindent{\bf Univariate two-sample rank tests (\textit{Step 2}).} Let $\hat{s}$ be the outcome of the first step. In order to perform \textit{Step 2}, we considered the test statistic $\widehat{W}_{n,m}^{\phi}(\hat{s})$ with the following score generating functions: $\phi_{MWW}(u) = u$, which corresponds to the classic Mann-Withney-Wilcoxon statistic (MWW, \cite{Wil45}), and $\phi_{RTB}(u) = u\mathbb{I}\{ u\geq u_0\}$ for $u_0\in (0,1)$ (RTB, \cite{CV07}) so as to focus on top ranks and the behavior of the $\roc$ curve near the origin.
        
        For a given level $\alpha$, the type-I error is controlled by taking as critical threshold the quantile $q_{n'',m''}^{\phi}(\alpha)$, while the type-II error is estimated by Monte Carlo for the pairs of instrumental distributions described below. As discussed in subsection \ref{subsec:back_unirankroc}, the rank statistics are pivotal and the quantiles $q_{n'',m''}^{\phi}(\alpha)$ are tabulated for classic choices of $\phi$ (easily accessible by means of  the \texttt{SciPy} open-access library available in \texttt{Python} for instance) or can be easily calculated, even for very large values of $n''$, $m''$,  using modern computing frameworks.
        
        We first consider both probabilities when the \textit{dissimilarity/discrepancy} parameter $\varepsilon>0$ varies with fixed design, then we fix the dissimilarity parameter and let the dimensionality $d$  increase.
        \medskip
        
        \noindent{\bf Two-split trick.} Since the most challenging step is undoubtedly the first one, consisting in learning to rank observations nearly in the same order as that induced by the likelihood ratio $\Psi(z)$, a much larger fraction of the data is allocated to bipartite ranking. Namely, we take $N'/N=4/5$ (so that $N''=N/5$).
        
        \medskip
        \noindent{\bf Benchmark tests.}
        The performance of the ranking-based methodology proposed is compared to that of four state-of-the-art multivariate and nonparametric two-sample tests:  namely the unbiased (quadratic) Maximum Mean Discrepancy (\texttt{MMD}) test with Gaussian kernels (see  \cite{GBRSS07,GBRSS12}), the graph-based Wald-Wolfowitz runs test (\texttt{FR}) generalized to the multivariate setting as proposed in  \cite{FriRaf79}, the metric-based Energy test (\texttt{Energy}) (see \cite{SzRi13}) and the depth-based procedure  (\texttt{Tukey}) documented in \cite{LiuS93} to extend univariate two-sample rank tests with the Tukey statistical depth (\cite{Tukey75}), see subsection \ref{subsec:back_unirankroc} for further details.  Notice that both \texttt{MMD} and \texttt{Energy} are not exactly distribution-free tests under $\mathcal{H}_0$, therefore they are calibrated by means of a permutation technique. We also point out that \texttt{Tukey} is implemented by using the same tabulations of rank statistics as those used to apply the %REDRESS 
          ranking-based methodology.
        
        \medskip
        
        \noindent{\bf Evaluation criteria.}  The frequencies of type-I error ($\varepsilon = 0$) and of type-II error ($\varepsilon > 0$) of each testing procedure at all levels $\alpha\in (0,1)$ have been computed over $B\geq 1$ Monte-Carlo replications for each experiment and the distributions of $p$-values obtained are also reported. The impact of an increase of the dimension $d$ on the power, for fixed $\varepsilon$ and sample size $N$, is investigated as well.
        
        \medskip
        
        \noindent{\bf Experimental parameters.} For all the experiments, the pooled sample is of size $N = 2000 $ and balanced, \textit{i.e.} $n= m=N/2$ (corresponding to $p=1/2$). The number of Monte Carlo replications is $B = 100$. The parameter $u_0$ for the RTB score-generating function varies  in the set $\{0.7,0.8,0.9\}$. For the benchmark tests, the null distribution is estimated over $B_{perm} = 1000$ permutations and the related hyperparameter is optimized over the range $ \{1e-3, 1e-2, 1e-1, 1, 5, 10, 15, 20, 25, 30, 1e2, 1e3\}$. The figures also depict the pointwise confidence intervals at level $95\%$. 
        
        \subsection{Synthetic Datasets}
        We illustrate the performance of the family of rank-based tests proposed through the following two-sample problems. Various location and scale Gaussian models are considered in order to compare it to that of the (optimal) likelihood ratio statistic, see Example \ref{exgauss}. We also test the homogeneity samples drawn from mixture models and from less classic (heavy-tailed) statistical distributions. 
        See the Appendix section \ref{sec:app_expe} for additional information on the  parameters chosen for the distributions and the plots of the  true $\roc$ curves (including $\roc^*$). For $d\geq 1$, denote by $0_d\in \RR^d$ the null vector,  by $\mathbf{1}_d\in \RR^d$ the unit  vector ($\ie$ with all coordinates equal to $1$), by $ \I_d\in\RR^{d\times d}$ the identity matrix and by $S_d^+(\RR)$ the cone of positive semi-definite $d\times d$ matrices with real entries.
        \medskip
        
        \noindent{\bf Location  Gaussian models.} The two samples $\bX \sim \mathcal{N}_d(\mu_X, \Sigma)$ and $\bY \sim \mathcal{N}_d(\mu_Y, \Sigma)$ are drawn independently, with $\Sigma \in S_d^+(\RR)$, $\varepsilon \in \{0.0, 0.02, 0.05, 0.08, 0.1\}$, as follows.
        
        \begin{enumerate}
            \item[(L1)] We set $\mu_Y = 0_d$ and  $\mu_X  = (\varepsilon / \sqrt{d})\times \mathbf{1}_d$. Two models for the covariance matrix are considered: the first coordinate  is negatively correlated with all the others and  the other $d-1$ coordinates are : mutually independent (L1$-$); positively correlated (L1$+$).
        \end{enumerate}
        
        For such models, the class $\S^*$  can be explicited, as detailed in Example \ref{exgauss}.
        \medskip
        
        \noindent{\bf Scale  Gaussian models.} The two samples $\bX \sim \mathcal{N}_d(0_d, \Sigma_X)$ and $\bY \sim \mathcal{N}_d( 0_d, \Sigma_Y)$  are drawn independently with $\Sigma_X, \; \Sigma_Y$ in $S_d^+(\RR)$, defined as follows.

        \begin{enumerate}
            \item[(S1)]	\textit{Decreasing correlation.} Set $\Sigma_{X,i,j}  = \alpha^{ \vert i-j\vert} $ and $\Sigma_{Y,i,j}  = \beta^{ \vert i-j\vert} $ for $i,\; j \leq d$, with $d = 20 $, $\beta= 0.2$ and $\alpha = \beta + \varepsilon$, $\varepsilon \in \{0, 0.1, 0.2, 0.3\}$.
            \item[(S2)] \textit{Equi-correlated samples.} Set $\Sigma_X = (1-\alpha) \I_d + \alpha \mathbf{1}_d\mathbf{1}_d^T$ and $\Sigma_Y = (1-\beta) \I_d + \beta \mathbf{1}_d\mathbf{1}_d^T$, with $d \in \{20, \; 30, \; 60, \; 100\}$, $\beta= 0.3$ and $\alpha = \beta + \varepsilon$, $\varepsilon \in \{0, 0.05, 0.1, 0.15\}$.	
        \end{enumerate}
        
        As for the Gaussian  location model, an optimal scoring function can be easily explicited in this case, the quadratic function  $s_{\theta^*} (\cdot) = \langle
        \cdot, \theta^{*} \cdot \rangle$, where $\theta^* = \Sigma_X^{-1} - \Sigma_Y^{-1}$ for instance.
        \medskip
        
        \noindent{\bf Non Gaussian models.} We also consider generative models with heavier tails to build other two-sample problems.

        \begin{enumerate}
            \item[(T1)] \textit{Cauchy distribution.} We generate $\bX = (X_1, X_2, X_3)$ and $\bY= (Y_1, Y_2, Y_3)$ independently, such that $X_1, X_2$ $  \overset{i.i.d}{\sim} \text{Cauchy}(\varepsilon, 1)$ and  $X_3,\; Y_1,\; Y_2,\; Y_3 \overset{i.i.d}{\sim} \text{Cauchy}(0, 1)$ with $\varepsilon \in \{0, 0.05, 0.15, 0.20\}$, $d = 3$.
            \item[(T2)]  \textit{Lognormal location.} We generate $\bX$ and $\bY$, so that $\log (\bX)$ and $\log (\bY)$ are drawn from the (L1) model with $\varepsilon \in \{0, 0.1, 0.2, 0.3\}$, $d=4$. 
            \item[(T3)]  \textit{Lognormal scale.} We generate $\bX$ and $\bY$, so that $\log \bX$ and $\log \bY$ are drawn from the (S1) model with $\varepsilon \in \{0, 0.1, 0.2, 0.3\}$, $d=20$. 
        \end{enumerate}

        The model (L1) is used in \cite{CleLimVay21} in the bipartite ranking context, the models (S1), (S2) and  (T1) are considered in \cite{DebSen19}, whereas the models (T2) and  (T3) are widely used in the univariate setting.

        \subsection{Results and Discussion}
        We now discuss  the numerical results obtained by the approach for the two-sample problems we have proposed and previously described. We compare its performance, depending on the bipartite ranking algorithm chosen for \textit{Step 1} and the function $\phi$, to some state-of-the-art (SoA) nonparametric tests  on the basis of the type-I/II error counts. 
        Focus is naturally on the  ability of each method to reject $\cH_0$ for small departures from it (the type of departure considered varying as well across the experiments), $\ie$ when $\varepsilon\to 0$, while controlling the type-I error.
        Precisely, we discuss for all methods : 1) their ability to control the type-I error for the range of  levels $\alpha \in (0, 1)$ (see the graphs), 2) the distribution of their  $p$-values depending on $\varepsilon$ (see the boxplots), and 3) their ability to reject the null hypothesis for the range of  levels $\alpha \in (0,1)$ as the dimension of the feature space $d$ increases  (see the graphs).  
        The various tests are tuned so as to control the type-I error while maximizing the power at fixed level $\alpha = 0.05$. They are compared for the most difficult two-sample problems considered, \textit{i.e.} the smallest value of $\varepsilon$  (see the tables).  Here, the results are displayed for the score generating function $\phi(u) = u$, which corresponds to the MWW  test statistic. In the Appendix section, some results are also given for other score-generating functions in order to show how the choice of $\phi$ may possibly impact the performance of the two-sample test (\textit{Step 2}), in particular that of the RTB version. 
          In addition to the figures and tables, we successively review the results obtained for each of the two-sample problems considered. \\  
        
        As a first go, we analyze the results for the Gaussian location models.  
        For such two-sample problems, as evidenced by Table \ref{tab:powerL}, ranking-based tests clearly manifest a greater capacity to reject the null hypothesis for small values of $\varepsilon$  than SoA tests,  while achieving a better control of the type-I error, see Figures  \ref{fig:boxplotH1_L1-}  (a)  and  \ref{fig:boxplotH1_L1+}   (a), in particular \texttt{rForest}. 
        The distributions of the $p$-values exhibit lower variance and higher power for very small $\varepsilon=0.02$ (L1-)  for ranking-based methods compared to SoA tests, see Fig.  \ref{fig:boxplotH1_L1-} (b-d), as well as  Fig. \ref{fig:boxplotH1_L1+} (b-d) for (L1+), gathered in Table \ref{tab:powerL}. \\ 
        
        For the Gaussian scale models, we investigate the performance of the methods in higher dimensions. 
        The results clearly reveal that all ranking-based tests control better the type-I error than SoA procedures, see Fig. \ref{fig:boxplotH1_S2} (a).  
        In lower dimensions, the power of SoA tests is competitive, see Table \ref{tab:powerST}, except for \texttt{Tukey} that exhibits a high $p$-value variance and a low power.  In particular, both \texttt{rForest} and \texttt{rNN} have the highest rejection rates for the smaller $\varepsilon$ (see columns (S2) $\varepsilon=0.1$ and (S3) $\varepsilon=0.05$) compared to the very low rates of SoA methods.
        Additionally, when the dimension $d$ of the feature space increases, the empirical power of ranking-based test \texttt{rNN} is always greater than that of any other method, whatever the test level $\alpha\in (0,1)$, see Fig. \ref{fig:power_size_S3} for (S2) and $d>30$. \\

        For the third class of models, we analyze the distributions of the $p$-values in Fig. \ref{fig:boxplotH1_T1} for (T1),  Fig.  \ref{fig:boxplotH1_T2}  for (T2) with (L1+), and  Fig.  \ref{fig:boxplotH1_T3} for (T3) with (S1). \texttt{rBoost} and  \texttt{MMD} perform similarly for (T1),  $\ie$ multivariate Cauchy distribution. 
        \texttt{rForest} and \texttt{MMD} have comparable results for both (T2) and (T3). 
        Overall the three models and under the alternative, \texttt{rForest} shows higher empirical rejection rates for the smallest $\varepsilon$.
        Lastly, the rejection rate under the null is better controled by ranking-based algorithms see Fig. \ref{fig:boxplotH1_T2} (a) and Fig. \ref{fig:boxplotH1_T3}  (a). \\

        Finally, we tested the impact of the increase of  $d$  on the rejection rate under alternatives for all methods $\wrt$ the range of levels $\alpha \in (0,1)$ for the (S2) two-sample problem. Figure \ref{fig:power_size_S3} clearly shows that for the smallest $\varepsilon = 0.05$, the empirical power of the ranking-based method \texttt{rNN} is always above the SoA procedures, with a notable difference for the highest dimension considered, namely $d=100$. When  $\varepsilon$ increases, then \texttt{rNN} and \texttt{MMD} have similar performance. Table \ref{tab:powersizeS3} gathers the estimated power for $\alpha=0.05$ and illustrates the non-decreasing power of the ranking-based method with the dimension, where SoA methods overall have constant/decreasing power $\wrt$ the dimension, see \texttt{Energy} $\varepsilon = 0.1$.\\
        
        To conclude, we empirically illustrated the competitiveness of ranking-based rank tests, especially for the location and pathological statistical problems, as it overall shows  a clear control of the empirical type-I error for a large range of levels $\alpha$, while resulting to similar or higher rejection rates under  alternatives for very small deviations $\varepsilon\to 0$. From these experiments, the high performer ranking-based algorithm for \textit{Step 1} shows to be \texttt{rForest}, while the lower is \texttt{rBoost}. The comparative SoA tests overall  control the type-I error for the majority of  models but at the price of algorithmic complexity due to the high number of permutations. Additionally, their empirical power for very small $\varepsilon$ is lower than this of the ranking-based methods. 
        The depth-based test (\texttt{Tukey}) particularly under performs.  Lastly, the distributions of the $p$-values show large variance for decreasing $\varepsilon$ and are especially larger for the SoA methods, independently of the underlying probabilistic model.

        \begin{figure}[ht!]
            \begin{tabular}{cccc}
                
                \parbox{0.23\textwidth}{	
                        \includegraphics[width=.2\textwidth]{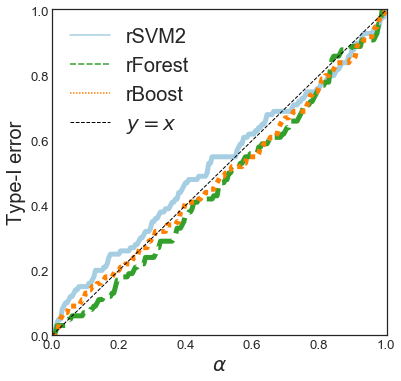}
                    \includegraphics[width=.2\textwidth]{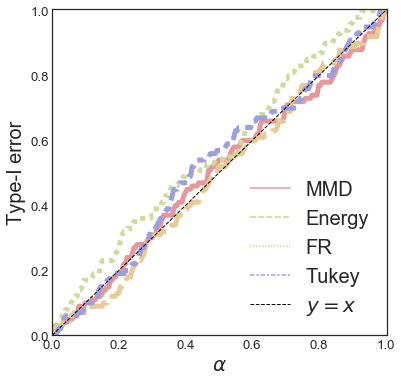}\\
                    {\small  a.  $\cH_0$,  $ \varepsilon=0.0$}
                }
            \parbox{0.23\textwidth}{	
            \includegraphics[width=.24\textwidth]{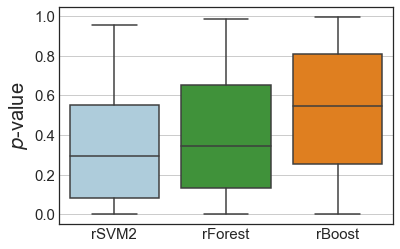}
            \includegraphics[width=.24\textwidth]{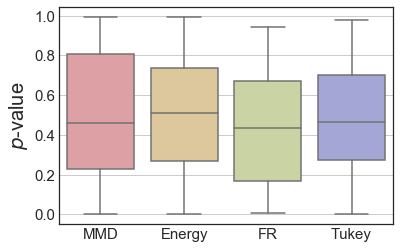}
            {\small b.  $\cH_1$,  $ \varepsilon=0.02$}
        }
                \parbox{0.23\textwidth}{	
                    \includegraphics[width=.23\textwidth]{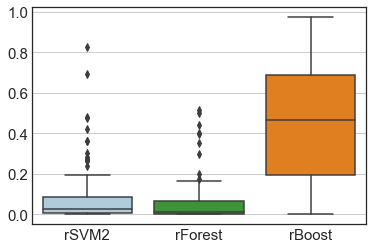}
                    \includegraphics[width=.23\textwidth]{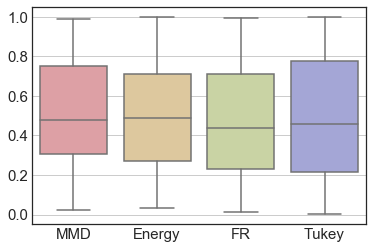}
                    {\small c.  $\cH_1$,  $ \varepsilon=0.05$}
                }	
                \parbox{0.23\textwidth}{	
                \includegraphics[width=.23\textwidth]{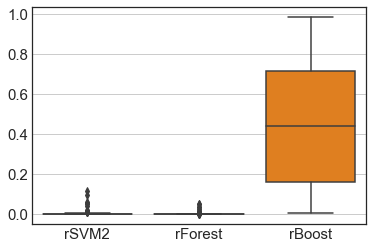}
                \includegraphics[width=.23\textwidth]{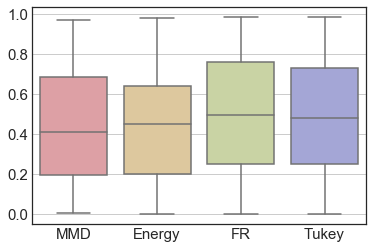}
                {\small d. $\cH_1$, $ \varepsilon=0.08$}
            }	
            \end{tabular}
            \caption{ Rejection rates of the homogeneity assumption under $\cH_0$ against significance level $\alpha \in (0, 1)$ in Fig. (a). Boxplots of  $p$-values under $\cH_1$ in Fig. (b-d)  for (L1-) with $\phi(u)=u$, $n=m=1000$, $d=6$, $B=100$. The ranking algorithms are  \texttt{rSVM2}, \texttt{rForest}, \texttt{rBoost} compared to \texttt{MMD}, \texttt{Energy},  \texttt{FR} and \texttt{Tukey}.}
            \label{fig:boxplotH1_L1-}
        \end{figure}

        \begin{figure}[ht!]
            \begin{tabular}{cccc}
                \parbox{0.23\textwidth}{	
                    \includegraphics[width=.2\textwidth]{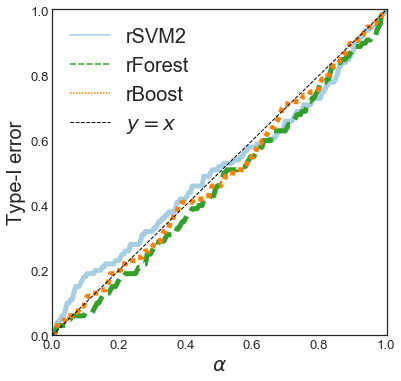}
                    \includegraphics[width=.2\textwidth]{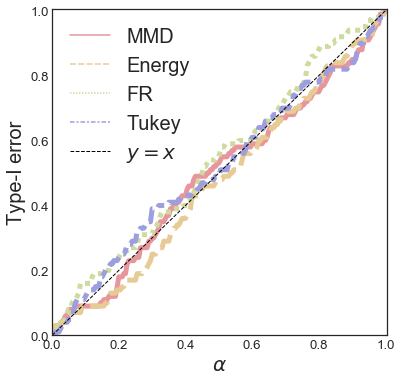}\\
                    {\small  a.  $\cH_0$,  $ \varepsilon=0.0$}
                }
            \parbox{0.23\textwidth}{	
                \includegraphics[width=.24\textwidth]{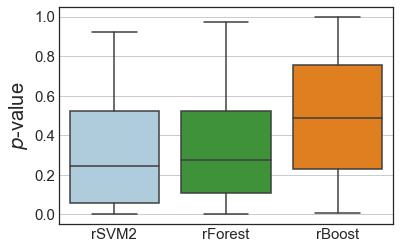}
                \includegraphics[width=.24\textwidth]{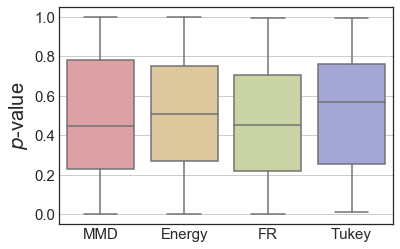}
                {\small b.   $\cH_1$, $ \varepsilon=0.05$}
            }	
            \parbox{0.23\textwidth}{	 
                \includegraphics[width=.23\textwidth]{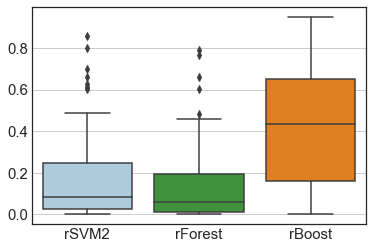}
            \includegraphics[width=.23\textwidth]{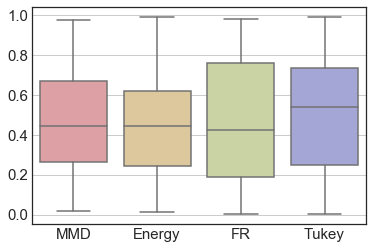}
            {\small c. $\cH_1$,  $ \varepsilon=0.08$}
        }
        \parbox{0.23\textwidth}{	 \includegraphics[width=.23\textwidth]{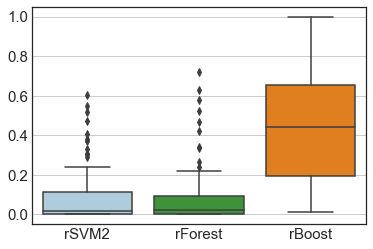}
        \includegraphics[width=.23\textwidth]{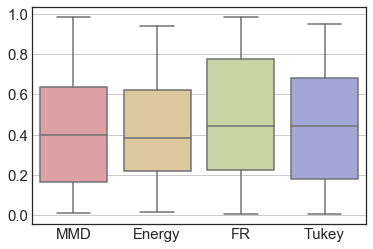}
        {\small d. $\cH_1$,  $ \varepsilon=0.1$}
        }
            \end{tabular}	
        \caption{Rejection rates of the homogeneity assumption  under $\cH_0$ against significance level $\alpha \in (0, 1)$ in Fig. (a). Boxplots of  $p$-values under $\cH_1$ in Fig. (b-d)   for (L1+) with $\phi(u)=u$, $n=m=1000$, $d=6$, $B=100$. The ranking algorithms are  \texttt{rSVM2}, \texttt{rForest}, \texttt{rBoost} compared to \texttt{MMD}, \texttt{Energy},  \texttt{FR} and \texttt{Tukey}.}
            \label{fig:boxplotH1_L1+}
        \end{figure}

        \begin{figure}[ht!]
            \begin{tabular}{cccc}
                \parbox{0.23\textwidth}{	
                        \includegraphics[width=.2\textwidth]{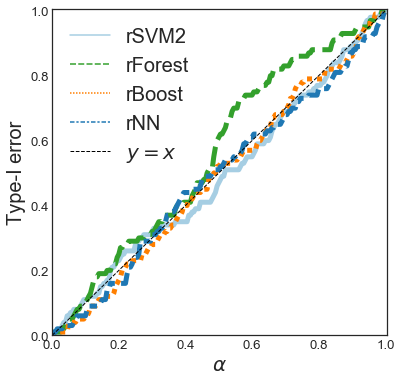}
                    \includegraphics[width=.2\textwidth]{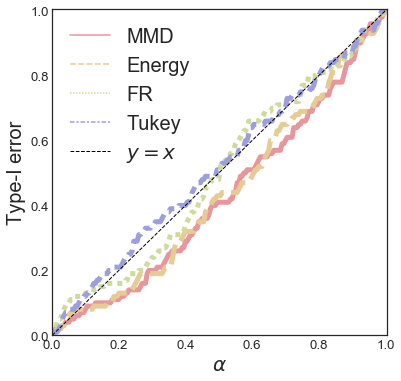}\\
                    {\small  a.  $\cH_0$,  $ \varepsilon=0.0$}
            }
                
                \parbox{0.23\textwidth}{	
            \includegraphics[width=.23\textwidth]{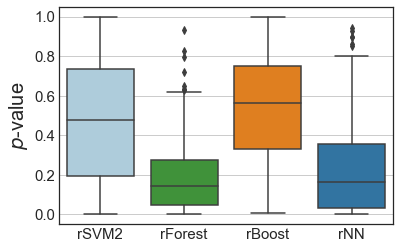}
                    \includegraphics[width=.23\textwidth]{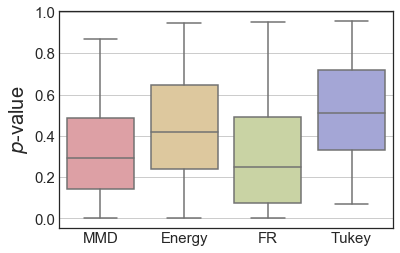}
                    {\small b.  $\cH_1$, $ \varepsilon=0.1$}
                }
            \parbox{0.23\textwidth}{	
            \includegraphics[width=.23\textwidth]{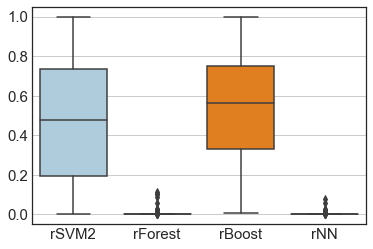}
            \includegraphics[width=.23\textwidth]{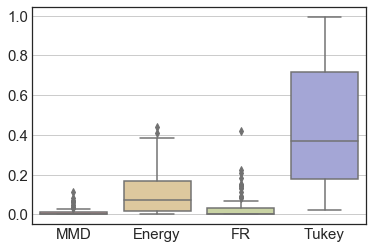}
            {\small c.  $\cH_1$, $ \varepsilon=0.2$}
        }
                
                \parbox{0.23\textwidth}{	
                    \includegraphics[width=.23\textwidth]{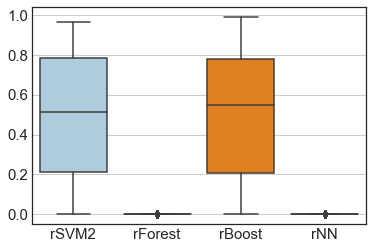}
                    \includegraphics[width=.23\textwidth]{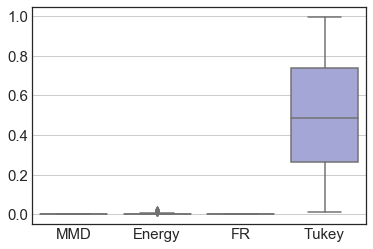}
                    {\small d. $\cH_1$, $ \varepsilon=0.3$}
                }
            \end{tabular}
            \caption{ Rejection rates of the homogeneity assumption  under $\cH_0$ against significance level $\alpha \in (0, 1)$ in Fig. (a). Boxplots of  $p$-values under $\cH_1$ in Fig. (b-d)    for (S1) with $\phi(u)=u$, $n=m=1000$, $d=20$, $B=100$. The ranking algorithms are  \texttt{rSVM2}, \texttt{rForest}, \texttt{rBoost},  \texttt{rNN} compared to \texttt{MMD}, \texttt{Energy},  \texttt{FR} and \texttt{Tukey}.}
            \label{fig:boxplotH1_S2}
        \end{figure}

        \begin{figure}[ht!]
            \begin{tabular}{cccc}
                \parbox{0.23\textwidth}{	
                        \includegraphics[width=.2\textwidth]{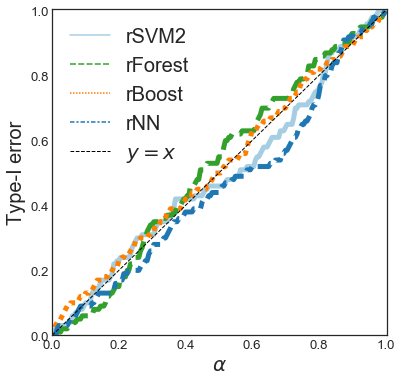}
                    \includegraphics[width=.2\textwidth]{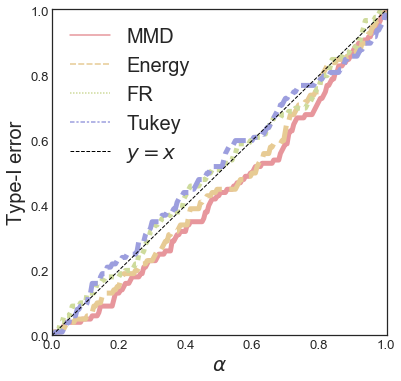}\\
                    {\small  a.  $\cH_0$,  $ \varepsilon=0.0$}
                }
                
                \parbox{0.23\textwidth}{	
                    \includegraphics[width=.24\textwidth]{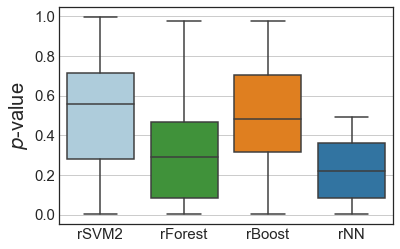}
                    \includegraphics[width=.24\textwidth]{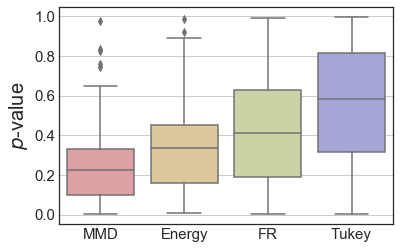}
                    {\small b.  $\cH_1$, $ \varepsilon=0.05$}
                }
                \parbox{0.23\textwidth}{	
                    \includegraphics[width=.23\textwidth]{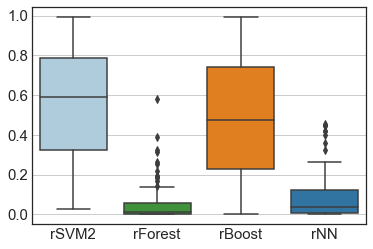}
                    \includegraphics[width=.23\textwidth]{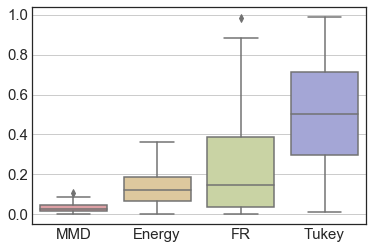}
                    {\small c.  $\cH_1$, $ \varepsilon=0.1$}
                }
                \parbox{0.23\textwidth}{	
                    \includegraphics[width=.23\textwidth]{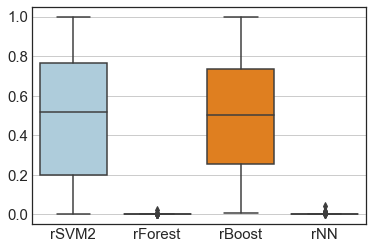}
                    \includegraphics[width=.23\textwidth]{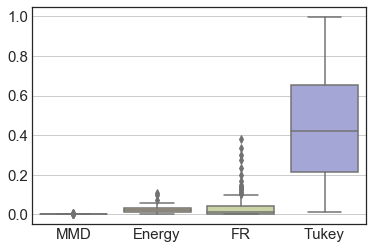}
                    {\small c.  $\cH_1$, $ \varepsilon=0.15$}
                }
            \end{tabular}
            \caption{ Rejection rates of the homogeneity assumption  under $\cH_0$ against significance level $\alpha \in (0, 1)$ in Fig. (a). Boxplots of  $p$-values under $\cH_1$ in Fig. (b-d)    for (S2) with $\phi(u)=u$, $n=m=1000$, $d=30$, $B=100$. The ranking algorithms are  \texttt{rSVM2}, \texttt{rForest}, \texttt{rBoost}, \texttt{rNN} compared to \texttt{MMD}, \texttt{Energy},  \texttt{FR} and \texttt{Tukey}.}
            \label{fig:boxplotH1_S3}
        \end{figure}
        
        \begin{table}[ht!]
            \normalsize
            \renewcommand{\arraystretch}{1.3}
            \centering
            \resizebox{\textwidth}{!}{	
                \begin{tabular}{l||c|ccc||c|ccc}
                    \hline
        
                {} & \multicolumn{4}{c||}{ \textbf{Model (L1-), \quad  $d=6$}  } &\multicolumn{4}{c}{ \textbf{Model (L1+), \quad  $d=6$} }  \\
                \hline
                \textbf{Rejection rate of the null } & \textbf{Under $\cH_0$}&\multicolumn{3}{c||}{ \textbf{Under  $\cH_1$ }  } & \textbf{Under $\cH_0$}&\multicolumn{3}{c}{\textbf{Under $\cH_1$} } \\
                
                \textbf{Method}	&   $\varepsilon=0.0$ & $\varepsilon=0.02$& $\varepsilon=0.05$& $\varepsilon=0.08$& $\varepsilon=0.0$ & $\varepsilon=0.05$& $\varepsilon=0.08$& $\varepsilon=0.1$\\
                    \hline 
                    rSVM2&0.11 $\pm$ 0.31 &\bf 0.22 $\pm$ 0.49 & 0.60  $\pm$ 0.49 & 0.96 $\pm$ 0.20 & 0.10 $\pm$ 0.30 &\bf 0.23 $\pm$ 0.42&0.37 $\pm$ 0.48&\bf 0.66 $\pm$ 0.48\\
                    %\bf	rSVM1&   0.04 $\pm$ 0.20&\bf0.96 $\pm$ 0.20 &0.98 $\pm$ 0.14&\bf1.0&  \bf 0.0&0.98 $\pm$ 0.14&0.84 $\pm$ 0.37&\bf0.99  $\pm$ 0.10 \\
                    \bf rForest& \bf 0.05   $\pm$ 0.22 &0.15 $\pm$ 0.36 & \bf 0.71 $\pm$ 0.46 &\bf 0.98 $\pm$  0.14 &\bf 0.05   $\pm$ 0.22  & 0.19    $\pm$ 0.39 &\bf 0.48 $\pm$ 0.50& 0.62 $\pm$ 0.49 \\			
                    rBoost &  0.09 $ \pm$ 0.29 & 0.07 $\pm$ 0.26 & 0.08$\pm$ 0.27 & 0.11  $\pm$ 0.31  & 0.06 $\pm$ 0.24 &0.10$\pm$ 0.30 & 0.07 $\pm$ 0.26& 0.06 $\pm$ 0.24\\			
                    \hline
                    MMD& \bf 0.05 $\pm$ 0.22 &0.04 $\pm$ 0.20&0.03 $ \pm$ 0.17&0.04 $\pm$ 0.20 & 0.08 $\pm$ 0.27&0.06 $\pm$ 0.24&0.06 $\pm$ 0.24& 0.06 $\pm$ 0.24 \\			
                    Energy&0.06 $\pm$ 0.24& 0.05 $\pm$ 0.22&   0.03 $ \pm$ 0.17 &0.06 $\pm$ 0.24&0.06 $\pm$ 0.24& 0.05 $\pm$ 0.22 &0.04 $\pm$ 0.20 & 0.02 $\pm$ 0.14 \\			
                    FR &0.09 $ \pm$ 0.29 &0.09 $\pm$ 0.29   &0.09 $ \pm$ 0.29 &0.06 $\pm$ 0.24 & 0.08 $\pm$ 0.27&0.10$\pm$ 0.30 &0.09 $ \pm$ 0.29 &  0.07 $\pm$ 0.26 \\
                    Tukey &0.06 $\pm$ 0.24& 0.05 $\pm$ 0.22 &   0.08 $ \pm$ 0.27 &0.07 $\pm$ 0.26&\bf  0.05 $\pm$ 0.22  &0.08 $\pm$ 0.27 &0.07 $\pm$ 0.26&0.09 $ \pm$ 0.29  \\
                    
                \hline
                %\textbf{Model (S1), $d=20$}&&&& &\textbf{Model (S2), $d=30$} &&&\\
                {} & \multicolumn{4}{c||}{ \textbf{Model (S1), \quad $d=20$} } &\multicolumn{4}{c}{ \textbf{Model (S2), \quad $d=30$}  }  \\
                \hline
                \textbf{Rejection rate of the null } & \textbf{Under $\cH_0$}&\multicolumn{3}{c||}{ \textbf{Under  $\cH_1$ }  } & \textbf{Under $\cH_0$}&\multicolumn{3}{c}{\textbf{Under $\cH_1$} } \\
                \textbf{Method}	&   $\varepsilon=0.0$ & $\varepsilon=0.1$& $\varepsilon=0.2$& $\varepsilon=0.3$&   $\varepsilon=0.0$ & $\varepsilon=0.05$& $\varepsilon=0.1$& $\varepsilon=0.15$\\
                    \hline 
                    rSVM2&0.06  $\pm$ 0.24 &0.03$\pm$ 0.17 &0.06 $\pm$ 0.24& 0.09 $ \pm$ 0.29 &0.04 $\pm$ 0.20&0.04 $\pm$ 0.20&0.04 $\pm$ 0.20&0.04 $\pm$ 0.20 \\
                    %rSVM1&\bf  0.0    &0.00  &0.00& 0.00&\bf 0.01 $\pm$ 0.10&0.29 $\pm$ 0.46 &0.41 $\pm$ 0.49&0.10$\pm$ 0.30 \\
                    \bf	rForest & 0.05 $\pm$ 0.22  & \bf 0.30 $\pm$ 0.46   &0.95 $\pm$ 0.22&  \bf 1.00 &\bf 0.03 $\pm$ 0.17 & 0.19 $\pm$ 0.39 &  0.72 $\pm$ 0.45& \bf 1.00 \\
                    rBoost& 0.04 $\pm$ 0.20   &  0.02 $\pm$ 0.14& 0.08 $ \pm$ 0.27 & 0.10 $\pm$ 0.30  &0.09 $\pm$ 0.29&0.04 $\pm$ 0.20&0.05 $\pm$ 0.22& 0.03 $ \pm$ 0.17 \\
                    rNN& \bf 0.03 $\pm$ 0.17 & 0.27 $\pm$ 0.45 &\bf 0.98 $\pm$  0.14 &\bf 1.00 & 0.04 $\pm$ 0.20  &\bf 0.20  $ \pm$ 0.40 & 0.54 $\pm$ 0.50 &\bf 1.00 \\
                    \hline
                    MMD & 0.04 $\pm$ 0.20 &0.08 $ \pm$ 0.27  &0.93 $\pm$ 0.26 &\bf 1.00&0.04 $\pm$ 0.20&  0.13 $\pm$ 0.34&\bf 0.79 $\pm$ 0.41&\bf 1.00 \\
                    Energy& 0.06  $\pm$ 0.24 &0.07 $\pm$ 0.26 &0.39 $\pm$ 0.49&\bf1.00& 0.04 $\pm$ 0.20& 0.11 $\pm$ 0.31&0.18$\pm$ 0.39&0.93 $\pm$ 0.26\\
                    
                    FR &0.10 $\pm$ 0.30 &0.19 $\pm$ 0.39 &0.82 $\pm$ 0.39&\bf1.00& 0.06 $\pm$ 0.24 &0.06 $\pm$ 0.24&0.28$\pm$ 0.45&0.78 $\pm$ 0.42 \\
                    
                    Tukey &  0.08 $ \pm$ 0.27 &0.00 &0.02 $\pm$ 0.14&0.04 $\pm$0.20 & 0.04 $\pm$ 0.20&0.04 $\pm$ 0.20 &0.04$\pm$ 0.20 & 0.05 $\pm$ 0.22  \\
                    \hline
                \end{tabular}
                }
            \caption{ Rejection rates of the homogeneity assumption under $\cH_0$ and $\cH_1$ at significance level $\alpha=0.05$ for the location models (L1-) and (L1+) and for the scale models (S1) and (S2), with $n = m=1000$, $\pm$ their standard deviation at $95\%$.  For ranking methods, only the results associated to MWW test are presented. Methods in bold minimize among all algorithms the rejection rate under $\cH_0$ while maximizing under $\cH_1$.  }
            \label{tab:powerL}
        \end{table}

        \begin{figure}[ht!]
            \begin{tabular}{cccc}
                \parbox{0.23\textwidth}{	
                    \includegraphics[width=.2\textwidth]{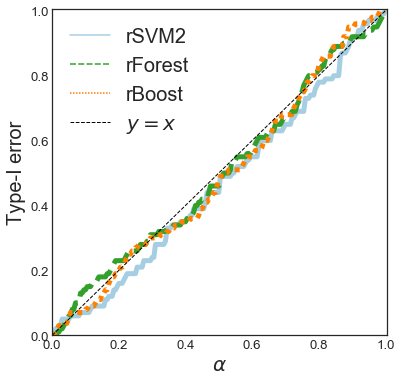}
                    \includegraphics[width=.2\textwidth]{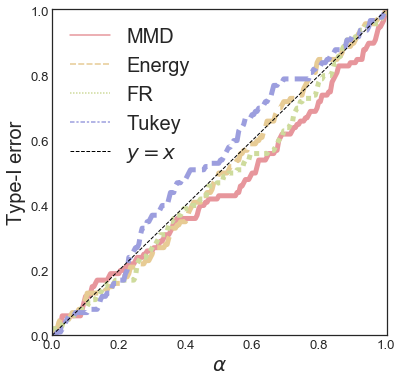}\\
                    {\small  a.  $\cH_0$,  $ \varepsilon=0.0$}
                }
            \parbox{0.23\textwidth}{	
                \includegraphics[width=.24\textwidth]{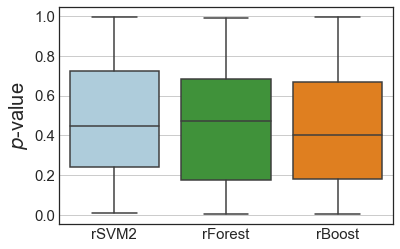}
                \includegraphics[width=.24\textwidth]{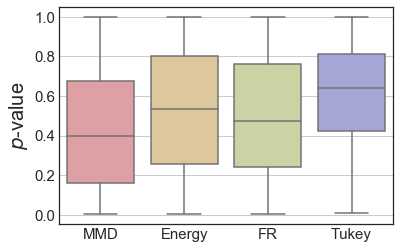}
                {\small b. $\cH_1$, $\varepsilon=0.05$}
            }
            \parbox{0.23\textwidth}{	
            \includegraphics[width=.23\textwidth]{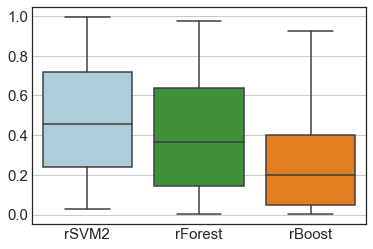}
            \includegraphics[width=.23\textwidth]{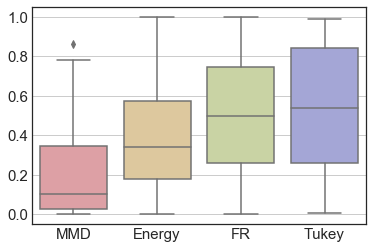}
            {\small c. $\cH_1$,  $\varepsilon=0.15$}
        }
            \parbox{0.23\textwidth}{	
            \includegraphics[width=.23\textwidth]{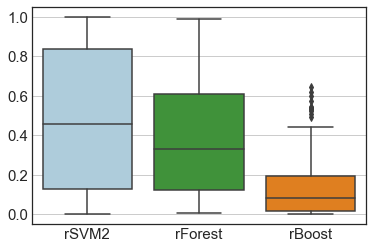}
            \includegraphics[width=.23\textwidth]{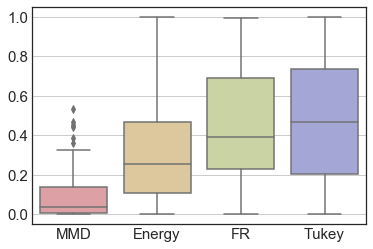}
            {\small d. $\cH_1$,   $\varepsilon=0.20$}
        }	
            \end{tabular}
            \caption{ Rejection rates of the homogeneity assumption  under $\cH_0$ against significance level $\alpha \in (0, 1)$ in Fig. (a). Boxplots of  $p$-values under $\cH_1$ in Fig. (b-d)    for (T1) with $\phi(u)=u$, $n=m=1000$, $d=3$, $B=100$. The ranking algorithms are  \texttt{rSVM2}, \texttt{rForest}, \texttt{rBoost} compared to \texttt{MMD}, \texttt{Energy},  \texttt{FR} and \texttt{Tukey}.}
            \label{fig:boxplotH1_T1}
        \end{figure}

        \begin{figure}[ht!]
            \begin{tabular}{cccc}
                
                \parbox{0.23\textwidth}{	
                    \includegraphics[width=.2\textwidth]{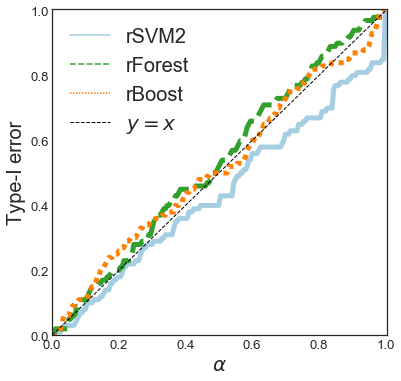}
                \includegraphics[width=.2\textwidth]{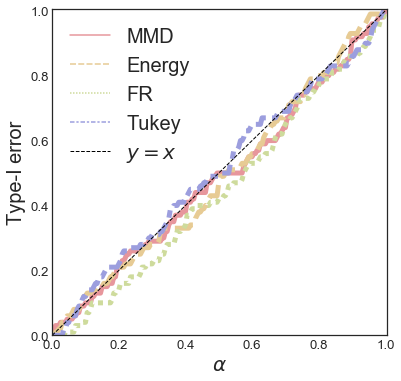}\\
                {\small  a.  $\cH_0$,  $ \varepsilon=0.0$}
                }	
            
                \parbox{0.23\textwidth}{	
                    \includegraphics[width=.24\textwidth]{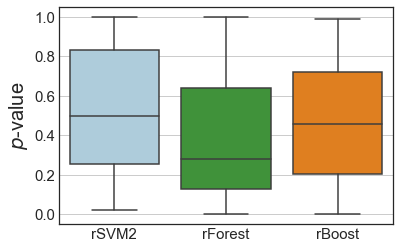}
                \includegraphics[width=.24\textwidth]{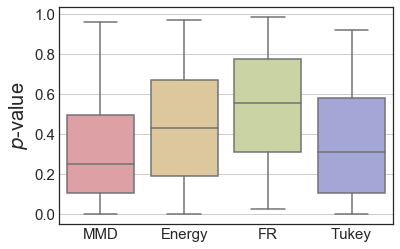}
                    {\small b. $\cH_1$, $ \varepsilon=0.1$}
                }	
        \parbox{0.23\textwidth}{	
            \includegraphics[width=.23\textwidth]{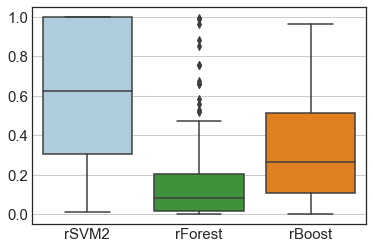}
        \includegraphics[width=.23\textwidth]{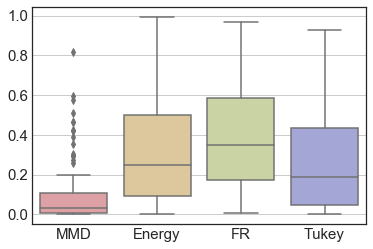}
            {\small c. $\cH_1$,  $ \varepsilon=0.2$}
        }\parbox{0.23\textwidth}{	
        \includegraphics[width=.23\textwidth]{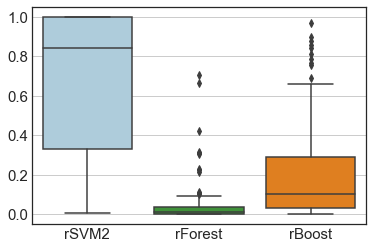}
        \includegraphics[width=.23\textwidth]{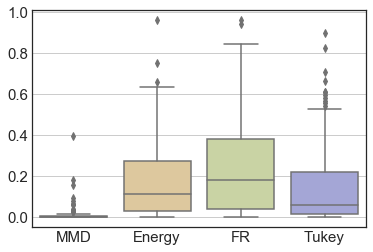}
        {\small d. $\cH_1$, $ \varepsilon=0.3$}
        }
            \end{tabular}
            \caption{ Rejection rates of the homogeneity assumption  under $\cH_0$ against significance level $\alpha \in (0, 1)$ in Fig. (a). Boxplots of  $p$-values under $\cH_1$ in Fig. (b-d)   for (T2) with (L1-) with $\phi(u)=u$, $n=m=1000$, $d=4$, $B=100$. The ranking algorithms are  \texttt{rSVM2}, \texttt{rForest}, \texttt{rBoost} compared to \texttt{MMD}, \texttt{Energy},  \texttt{FR} and \texttt{Tukey}.}
            \label{fig:boxplotH1_T2}
        \end{figure}

        \begin{figure}[ht!]
            \begin{tabular}{cccc}
                
                \parbox{0.23\textwidth}{
                        \includegraphics[width=.2\textwidth]{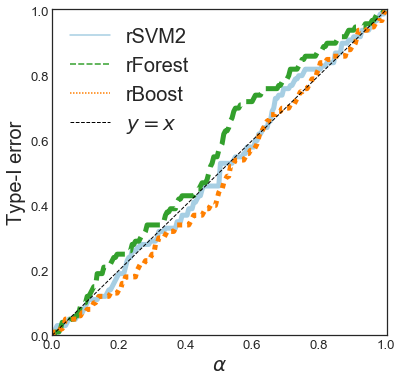}
                    \includegraphics[width=.2\textwidth]{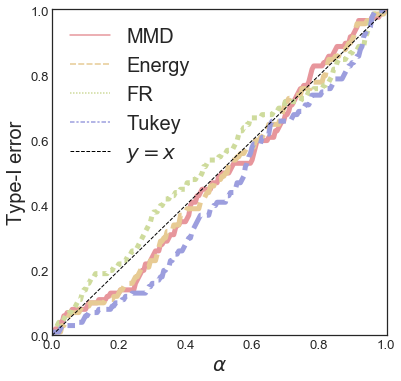}\\
                    {\small  a.  $\cH_0$,  $ \varepsilon=0.0$}
                }	
                \parbox{0.23\textwidth}{	
                    \includegraphics[width=.24\textwidth]{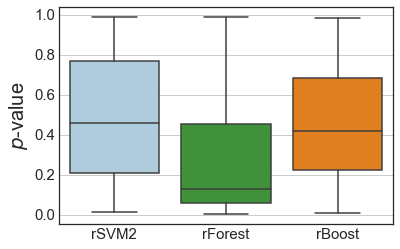}
                    \includegraphics[width=.24\textwidth]{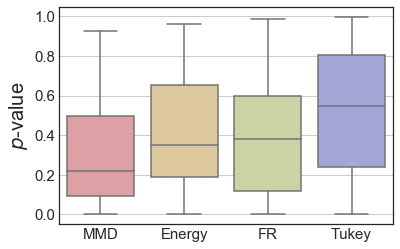}
                    {\small b. $\cH_1$,  $ \varepsilon=0.1$}
                }	
                \parbox{0.23\textwidth}{	
                    \includegraphics[width=.23\textwidth]{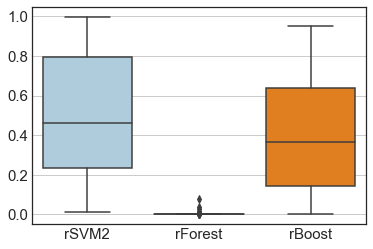}
                    \includegraphics[width=.23\textwidth]{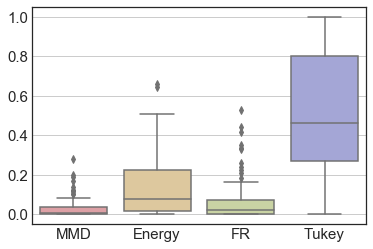}
                    {\small c. $\cH_1$, $ \varepsilon=0.2$}
                }
            \parbox{0.23\textwidth}{	
            \includegraphics[width=.23\textwidth]{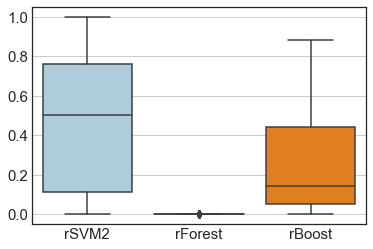}
            \includegraphics[width=.23\textwidth]{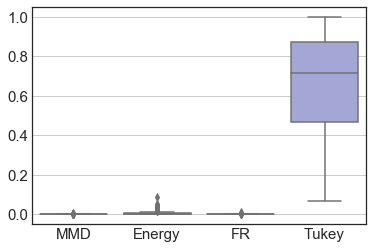}
            {\small d. $\cH_1$,   $ \varepsilon=0.3$}
        }	
            \end{tabular}
            \caption{ Rejection rates of the homogeneity assumption  under $\cH_0$ against significance level $\alpha \in (0, 1)$ in Fig. (a). Boxplots of  $p$-values under $\cH_1$ in Fig. (b-d)  for (T3) with (S1) with $\phi(u)=u$, $n=m=1000$, $d=20$, $B=100$. The ranking algorithms are  \texttt{rSVM2}, \texttt{rForest}, \texttt{rBoost} compared to \texttt{MMD}, \texttt{Energy},  \texttt{FR} and \texttt{Tukey}.}
            \label{fig:boxplotH1_T3}
        \end{figure}

        \begin{table*}[ht!]
            \tiny 
            \centering
            \resizebox{\textwidth}{!}{	
                \begin{tabular}{l||c|ccc}
                    \hline
                    {} & \multicolumn{4}{c}{ \textbf{Model (T1),  \quad  $d=3$} }  \\
                    \hline
                    \textbf{Rejection rate of the null } & \textbf{Under $\cH_0$} &\multicolumn{3}{c}{\textbf{Under $\cH_1$  }  } \\
                    \textbf{Method}	& $\varepsilon=0.0$  & $\varepsilon=0.05$& $\varepsilon=0.15$& $\varepsilon=0.20$\\
                    
                    \hline 
                    rSVM2 &\bf 0.05 $\pm$ 0.22 &0.06 $\pm$ 0.24&0.06 $\pm$ 0.24 &0.14 $\pm$ 0.35  \\
                    rForest &0.06 $\pm$ 0.24    &\bf 0.08 $\pm$ 0.27& 0.14 $\pm$ 0.35  &0.17 $\pm$ 0.38 \\
                    rBoost &\bf 0.05 $\pm$ 0.22   &0.06 $\pm$ 0.24 &0.26  $\pm$ 0.44 &0.41 $\pm$ 0.5 \\
                    \hline
                    \bf MMD&   0.06 $\pm$ 0.24&0.04 $\pm$ 0.20 &\bf 0.35   $\pm$ 0.48&\bf0.57   $\pm$ 0.50\\
                    Energy&\bf0.05 $\pm$ 0.22   & 0.05 $\pm$ 0.22 &0.09 $\pm$ 0.29&0.15 $\pm$ 0.36 \\
                    FR & \bf0.05 $\pm$ 0.22  &  0.05 $\pm$ 0.22  &0.08 $\pm$ 0.27&0.06 $\pm$ 0.24 \\
                    Tukey &  0.06 $\pm$ 0.24 &0.07 $\pm$ 0.26  &0.04 $\pm$ 0.20&0.07 $\pm$ 0.26  \\
                    
                    \hline
                        {} & \multicolumn{4}{c}{ \textbf{Model (T2), \quad  $d=4$} }  \\
                    \hline
                    \textbf{Rejection rate of the null } & \textbf{Under $\cH_0$} &\multicolumn{3}{c}{\textbf{Under $\cH_1$  }  } \\
                    \textbf{Method }	& $\varepsilon=0.0$ & $\varepsilon=0.10$& $\varepsilon=0.20$& $\varepsilon=0.30$\\
        
                    \hline 
                    rSVM2 &0.03 $\pm$ 0.17&0.06 $\pm$ 0.24 &0.04 $\pm$ 0.20 & 0.03 $\pm$ 0.17 \\
                    rForest &0.05 $\pm$ 0.22 &0.12 $\pm$ 0.33 & 0.43$\pm$ 0.50& 0.80 $\pm$ 0.40\\
                    rBoost & 0.06 $\pm$ 0.24 & 0.09 $\pm$ 0.29 & 0.18 $\pm$ 0.39 & 0.34 $\pm$ 0.48  \\
                    \hline
                \bf	MMD&0.05 $\pm$ 0.22 &\bf 0.18 $\pm$ 0.39 &\bf 0.56  $\pm$ 0.50&\bf 0.92 $\pm$ 0.27   \\
                    Energy&0.06 $\pm$ 0.24  &0.08 $\pm$ 0.27 & 0.16  $\pm$ 0.37 &0.32$\pm$ 0.47  \\
                    FR &\bf 0.01 $\pm$ 0.10 &0.02 $\pm$ 0.14 & 0.08 $\pm$ 0.26&0.28 $\pm$ 0.45\\
                    Tukey &0.04 $\pm$ 0.20 &0.12 $\pm$ 0.33 & 0.27  $\pm$ 0.45&0.46 $\pm$ 0.50  \\
                    
                    %\hline
                    \hline
                    {} & \multicolumn{4}{c}{ \textbf{Model (T3), \quad $d=20$} }  \\
                \hline
                \textbf{Rejection rate of the null } & \textbf{Under $\cH_0$} &\multicolumn{3}{c}{\textbf{Under $\cH_1$  }  } \\
                    \textbf{Method}	& $\varepsilon=0.0$  & $\varepsilon=0.1$& $\varepsilon=0.2$& $\varepsilon=0.3$ \\
                        \hline 
                        rSVM2 &0.05 $\pm$ 0.22 &0.06 $\pm$ 0.24  &0.05 $\pm$ 0.22 &0.13 $\pm$ 0.34 \\
                        %rSVM  &0.04 $\pm$ 0.20&\bf0.25 $\pm$ 0.44 &0.69 $\pm$ 0.47&\bf 0.80 $\pm$ 0.40\\
                        \bf	rForest & 0.04 $\pm$ 0.20&\bf 0.23 $\pm$ 0.42 &\bf 0.99 $\pm$ 0.10 & \bf 1.00 \\
                        rBoost &0.05 $\pm$ 0.22 &0.07 $\pm$ 0.26 & 0.11 $\pm$ 0.31 &0.24  $\pm$ 0.43\\
                        \hline
                        MMD&  0.07 $\pm$  0.26& 0.15 $\pm$ 0.36  & 0.83 $\pm$ 0.38 & 1.00\\
                        Energy& 0.06 $\pm$ 0.24&0.09 $\pm$ 0.29&0.40 $\pm$ 0.49&0.98 $\pm$ 0.14 \\
                        FR & 0.07 $\pm$  0.26& 0.09 $\pm$ 0.29&0.63 $\pm$ 0.49 &1.00\\
                        Tukey & \bf 0.03 $\pm$  0.17 & 0.05 $\pm$ 0.22 &0.04 $\pm$ 0.20& 0.00\\
                        \hline
                \end{tabular}
            }		
            \caption{Rejection rates of the homogeneity assumption under $\cH_0$ and $\cH_1$ at significance level $\alpha=0.05$ for models (T1-3), with $n = m=1000$, $\pm$ their standard deviation at $95\%$.  For ranking methods, only the results associated to MWW test are presented. Methods in bold minimize among all algorithms the rejection rate under $\cH_0$ while maximizing under $\cH_1$.}
            \label{tab:powerST}
        \end{table*}

        \begin{figure}[ht!]
            \begin{tabular}{ccc}
                \parbox{0.3\textwidth}{	
                    \includegraphics[width=.3\textwidth]{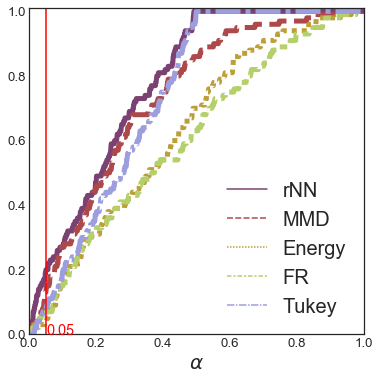}
                    {\small a.  $d = 30$,  $\varepsilon= 0.05$}\\
                        \includegraphics[width=.3\textwidth]{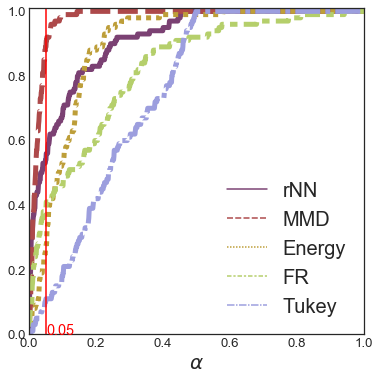}\\
                    {\small e.  $d = 30$, $\varepsilon=0.1$}
                }	
                
                \parbox{0.3\textwidth}{	
                    \includegraphics[width=.3\textwidth]{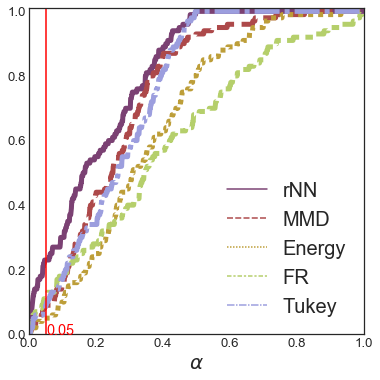}
                    {\small b.  $d = 60$,  $\varepsilon= 0.05$}\\
                        \includegraphics[width=.3\textwidth]{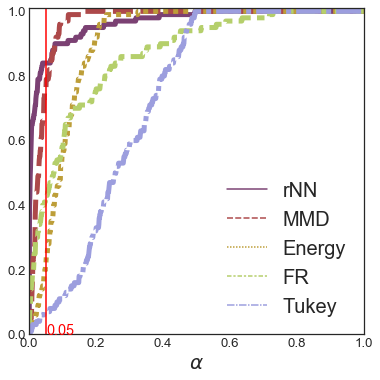}\\
                        {\small f.  $d = 60$,  $\varepsilon= 0.1$}
                }	
                \parbox{0.3\textwidth}{	
                    \includegraphics[width=.3\textwidth]{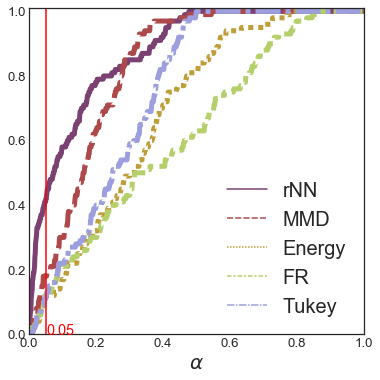}
                    {\small c.  $d = 100$,  $\varepsilon= 0.05$}\\
                    \includegraphics[width=.3\textwidth]{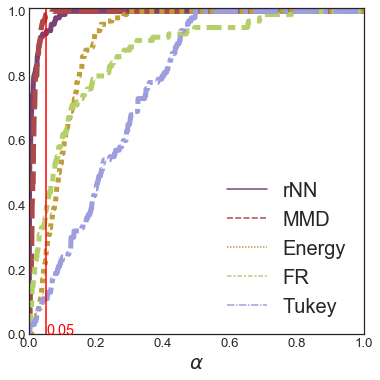}\\
                    {\small g.  $d = 100$,  $\varepsilon= 0.1$}
                }	
            \end{tabular}
            \caption{Rejection rates of the homogeneity assumption under $\cH_1$ against significance level $\alpha \in (0, 1)$ for increasing dimension $d \in \{30, 60, 100\}$ of the feature space for (S2) with $\phi(u)=u$, $n=m=1000$, $B=100$ and $\varepsilon= 0.05$ (a-c), $\varepsilon=0.1$ (e-g). The ranking algorithm  is   \texttt{rNN} compared to \texttt{MMD}, \texttt{Energy},  \texttt{FR} and \texttt{Tukey}.}
            \label{fig:power_size_S3}
        \end{figure}

        \begin{table*}[ht!]
            %\small
            \renewcommand{\arraystretch}{1.1}
            \centering
            \resizebox{\textwidth}{!}{	
                \begin{tabular}{l||ccc|ccc}
                    \hline
                \textbf{Rejection rate of the null } &\multicolumn{3}{c|}{\textbf{Under $\cH_1$,  \quad ($\varepsilon = 0.05$)}   } & \multicolumn{3}{c}{\textbf{Under $\cH_1$,  \quad ($\varepsilon = 0.1$)}  } \\
                    \textbf{Method}	& $d=30$& $d=60$& $d=100$&  $d=30$& $d=60$& $d=100$\\
                    
                    \hline 
                    \bf rNN&\bf0.20 $\pm$ 0.40 &\bf0.23 $\pm$  0.42&\bf0.42  $\pm$ 0.50 &0.54  $\pm$ 0.50&\bf0.84$\pm$ 0.37 &0.93 $\pm$ 0.26\\
                    \hline
                    MMD&0.13 $\pm$ 0.34 & 0.09 $\pm$ 0.29 &0.18 $\pm$  0.39&\bf0.89  $\pm$ 0.31&0.76 $\pm$ 0.43&\bf 0.99 $\pm$ 0.10\\
                    Energy&0.04 $\pm$ 0.20&0.05 $\pm$ 0.22 &0.11 $\pm$  0.31&0.27  $\pm$ 0.45&0.22 $\pm$ 0.42 &0.23 $\pm$ 0.42\\
                    FR & 0.06 $\pm$ 0.24 &0.11 $\pm$0.31&0.13 $\pm$  0.34&0.39  $\pm$ 0.49&0.43 $\pm$ 0.50&0.38 $\pm$ 0.49 \\
                    Tukey & 0.08 $\pm$ 0.27 &0.09 $\pm$ 0.29 &0.11 $\pm$  0.31 &0.11 $\pm$ 0.31 &0.07 $\pm$ 0.26& 0.11 $\pm$ 0.31\\	
                    \hline
                \end{tabular}
            }		
            \caption{Rejection rates of the homogeneity assumption under $\cH_1$ at significance level $\alpha=0.05$ for the scale model (S2), with $n = m=1000$, $\pm$ their standard deviation at $95\%$.  For \texttt{rNN}, only the results associated to MWW test are presented. Methods in bold maximize among all algorithms the rejection rate under $\cH_1$.     }
            \label{tab:powersizeS3}
        \end{table*}

        \section{Conclusion}\label{sec:conclusion}
        In this paper, we have developed a fully novel and flexible approach to the two-sample problem, ubiquitous in statistical and machine-learning applications.\\ 
        It relies on the observation that the null/homogeneity assumption $\mathcal{H}_0$ ceases to hold true as soon as the optimal $\roc$ curve for the bipartite ranking problem related to the probability distributions $H$ and $G$ of the two samples observed does not coincide with the diagonal (and is then necessarily above it at some points of the $\roc$ space). As the curve $\roc^*_{H,G}$ is not directly observable, the methodology proposed involves a statistical learning step to solve the bipartite ranking problem based on a first fraction of the samples. It next consists in evaluating the departure from $\mathcal{H}_0$ using an empirical version of (a scalar summary of) $\roc^*_{H,G}$. In practice, the second step simply boils down to performing a classic two-sample rank test based on the ranks of the fraction of the samples not used in the learning stage, obtained by means of the ranking rule constructed in the first step. \\
        A sound theoretical analysis has been carried out, showing that the two types of errors made by the method can be controlled in a nonasymptotic fashion. It is complemented by an experimental study providing strong empirical evidence of the merits of the ranking-based method compared to some popular alternatives: in general, it exhibits a performance that better withstands high-dimensionality, better resists as one gets closer and closer to the null hypothesis and adapts to a  wide variety of ways of departing from homogeneity. However, several issues remain to be investigated so that the general approach introduced here can systematically perform well in practice. \\
        One may state two open problems in particular. Understanding how to dispatch the data at disposal, for the statistical learning purpose and for performing the rank test, as well as developing a method to combine efficiently several ranking-based rank tests so as to provably improve performance (insofar as one does not know in advance which score generating function $\phi$ may capture best the possible type of departure from homogeneity exhibited), requires further research.

%%%%%%%%%%%%%%%%%%%%%%%%%%%%%%%%%%%%%%%%%%%%%%%%%%%%%%%
%%%%%%%%%%%%%%%%%%%%%%%%%%%%%%%%%%%%%%%%%%%%%%%%%%%%%%%

%\section*{Acknowledgments}
%	The second author was supported by a grant from R\'egion Ile-de-France and from Novo Nordisk Foundation Grant NNF20OC0062897.

\bibliographystyle{abbrvnat}
\bibliography{Ref_jmlr}

\appendix

	\section{Properties of the $\roc$ curves}\label{app:rocprop}
For the sake of completeness, basic properties of $\roc$ curves are recalled. See \textit{e.g.} Proposition 17 in \cite{CV09ieee}.

\begin{proposition}{\sc(Properties of the $\roc$ curve)}\label{prop:properties}
For any probability distributions $H$ and $G$ and any scoring function $s : \Z \to
\RR$, the following properties hold:
\begin{enumerate}
\item {\bf Limit values.} We have: $$\roc(s, 0) =0 \; \text{ and } \; \roc(s, 1) =1~.$$
\item {\bf Invariance.} For any strictly increasing function $T  : \RR \to
\RR$, we have, for all $\alpha\in (0,1)$: $\roc (T \circ s, \alpha)
= \roc( s, \alpha)$.
\item {\bf Concavity.} If the likelihood ratio $\dd G_s/\dd H_s$ is a
monotone function then the $\roc$ curve is concave. In particular, the optimal curve $\roc^*$ is concave.
\item {\bf Linear parts.} If the likelihood ratio $\dd G_s/\dd H_s$ is
constant on some interval in the range of the scoring function $s$
then the $\roc$ curve will present a linear part on the
corresponding domain. Furthermore, $\roc^*$ is linear on $[\alpha_1,\alpha_2]$ iff $\Psi(z)=(\dd G/\dd H)(z)$ is constant on the subset $$\{z\in \mathcal{Z}/\; Q^*(\alpha_2)\leq \Psi(z) \leq Q^*(\alpha_1)\}~,$$ where $Q^*(\alpha)$ means the quantile at level $1-\alpha\in (0,1)$ of the $\rv$ $\Psi(\mathbf{Y})$.
\item {\bf Differentiability.} Assume that the distributions $H$ and $G$ are continuous. Then, the $\roc$ curve of a scoring
function $s$ is differentiable $\IFF$ the distributions $H_s$ and $G_s$ are continuous.
\end{enumerate}
\end{proposition}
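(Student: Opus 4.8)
The plan is to reduce every item to the univariate pair of pushforward laws $(H_s,G_s)$, since by definition $\roc(s,\cdot)=\roc_{H_s,G_s}$. Throughout I would work from the explicit representation $\roc(s,\alpha)=1-G_s\circ H_s^{-1}(1-\alpha)$ recalled in subsection \ref{subsec:back_unirankroc} and, whenever densities $h_s,g_s$ exist, from the slope formula obtained by differentiating the composition,
\begin{equation*}
\frac{\dd}{\dd\alpha}\roc(s,\alpha)=\frac{g_s\bigl(H_s^{-1}(1-\alpha)\bigr)}{h_s\bigl(H_s^{-1}(1-\alpha)\bigr)}=\frac{\dd G_s}{\dd H_s}\bigl(H_s^{-1}(1-\alpha)\bigr).
\end{equation*}
The limit values (item~1) then follow by plugging $\alpha\in\{0,1\}$ into the representation and using $H_s(+\infty)=G_s(+\infty)=1$ and $H_s(-\infty)=G_s(-\infty)=0$, together with the convention that jumps are joined by segments.

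For the invariance (item~2), the key observation is that for a strictly increasing $T$ one has $\{T\circ s\le T(t)\}=\{s\le t\}$, so that $H_{T\circ s}(T(t))=H_s(t)$ and likewise for $G$. Hence the parametric P-P plots $t\mapsto(1-H_s(t),1-G_s(t))$ and $u\mapsto(1-H_{T\circ s}(u),1-G_{T\circ s}(u))$ trace exactly the same set of points under the reparametrization $u=T(t)$, so the two curves coincide. I would emphasise that this is really a statement that the $\roc$ curve depends on the pair of laws only through their joint ordering, which a strictly increasing transform leaves unchanged.

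Items~3 and 4 I would treat together via the slope formula: as $\alpha$ increases the quantile $H_s^{-1}(1-\alpha)$ decreases, so a monotone (increasing, which is the relevant orientation here since optimal scores are increasing transforms of $\Psi$) likelihood ratio $\dd G_s/\dd H_s$ makes the slope nonincreasing in $\alpha$, i.e.\ the curve concave; constancy of the ratio on a score-interval makes the slope constant there, i.e.\ a linear part. For the optimal curve, $\Psi$ is trivially a monotone function of itself and, by Corollary~5 in \cite{CV09ieee} quoted above, $\dd G/\dd H(z)=\dd G_{s^*}/\dd H_{s^*}(s^*(z))$, which transfers concavity to $\roc^*$ and identifies the linear-part condition after translating the relevant score-interval into an $\alpha$-interval through the quantile function $Q^*$ of $\Psi(\mathbf{Y})$. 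A cleaner, assumption-free route for concavity is a Neyman--Pearson argument: thresholding a monotone likelihood ratio yields the most powerful tests, whose power function is exactly $\roc^*$, and an upper envelope of achievable (type-I, power) points is automatically concave.

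The delicate point, and the one I expect to be the main obstacle, is the differentiability claim (item~5): one must show that, $H$ and $G$ being continuous, $\roc(s,\cdot)$ is differentiable \emph{iff} $H_s$ and $G_s$ are continuous. The subtlety is that $s$ may be many-to-one, so $s(\mathbf{X})$ or $s(\mathbf{Y})$ can carry atoms even though $H,G$ do not; an atom of $G_s$ forces a jump (interpolated by a vertical segment, hence a corner) while an atom of $H_s$ forces a flat segment meeting the rest of the curve at a corner, in both cases destroying differentiability at the endpoints of the induced interval. Conversely, continuity of $H_s$ and $G_s$ makes $H_s^{-1}$ continuous and the composition $1-G_s\circ H_s^{-1}(1-\cdot)$ admit a genuine derivative through the slope formula. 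I would need to argue carefully that the piecewise-linear interpolation convention creates no spurious non-differentiable points when both laws are continuous, and that the corners produced by atoms cannot be smoothed away; this bookkeeping around the interpolation convention is where the real care is required.
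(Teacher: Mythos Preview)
The paper does not actually prove this proposition: it appears in Appendix~\ref{app:rocprop} under the heading ``For the sake of completeness, basic properties of $\roc$ curves are recalled'', with a bare citation to Proposition~17 in \cite{CV09ieee} and no argument whatsoever. There is thus no proof in the paper to compare your proposal against.

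That said, your sketch follows the standard route and is sound. Two minor remarks. In item~3 the statement says ``monotone'' rather than ``nondecreasing''; a nonincreasing likelihood ratio would produce a convex curve below the diagonal via your slope formula, so the concavity conclusion implicitly concerns the nondecreasing case (or is recovered after replacing $s$ by $-s$), as you already note parenthetically. In item~5 your diagnosis is correct: atoms of $G_s$ create vertical segments and atoms of $H_s$ create horizontal ones, and the corners at their endpoints are the obstruction; the only real work in the ``only if'' direction is checking that the linearly interpolated segment through an atom cannot accidentally have matching one-sided slopes on both sides, which fails generically because the interpolation slope is the ratio of the atom masses while the adjacent slopes are governed by the density ratio.
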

	
	\section{Technical Proofs} \label{app:proofs}
	\subsection{Proof of Formula \eqref{eq:AUC_connect}}
	Given two real-valued, independent and continuous random variables $X\sim G$ and $Y\sim H$, the proof is straightforward by noticing that:
	\begin{equation*}
		\int_{-\infty}^{\infty}\{H(t)-G(t)\}\dd H(t) = \frac{1}{2} - \int_{-\infty}^{\infty}G(t)\dd H(t)~.
	\end{equation*}
	Hence, we have:
	\begin{multline*}
		\auc_{H,G}= \mathbb{P}\{Y \leq X   \} = \EE[ \EE[\mathbb{I}\{Y \leq X   \}]  \mid Y]
		 = \EE_{Y\sim H}[1-G(Y)] \\
		 = \frac{1}{2} +	\int_{-\infty}^{\infty}\{H(t)-G(t)\}\dd H(t)~.
	\end{multline*}

	\subsection{Proof of Proposition \ref{prop:rationale}}
	The equivalence between assertions $(i)$ and $(ii)$ can be %straightforwardly
directly deduced from the following result, proved in  \cite{CV09ieee} (see Corollary 5 and Proposition 6's proof therein), recalled here for the sake of clarity.

\begin{lemma} (\cite{CV09ieee})
It holds with probability one:
\begin{equation*}
\Psi(\bZ)=\frac{\dd G_{\Psi}}{\dd H_{\Psi}}(\Psi(\bZ))~,
\end{equation*}
$\bZ$ denoting either $\bX$ or $\bY$.
\end{lemma}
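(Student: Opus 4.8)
The plan is to recognise the statement as the assertion that the likelihood ratio $\Psi=\dd G/\dd H$ is a \emph{sufficient statistic} for the pair $(H,G)$, so that the identity map $t\mapsto t$ is itself a version of the Radon--Nikodym derivative $\dd G_{\Psi}/\dd H_{\Psi}$. Throughout I would work under the standing hypothesis $G\ll H$, which is precisely what guarantees that $\Psi=\dd G/\dd H$ exists as a genuine, $H$-almost surely finite Radon--Nikodym derivative (taking the value $+\infty$ at worst on an $H$-null set, consistent with scoring functions being valued in $(-\infty,+\infty]$). This hypothesis also transfers to the pushforwards, $G_{\Psi}\ll H_{\Psi}$, so that $\dd G_{\Psi}/\dd H_{\Psi}$ is well defined.

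The crux is the identity: for every Borel set $A\subseteq(-\infty,+\infty]$,
\begin{equation*}
G_{\Psi}(A)=\int_A t\, H_{\Psi}(\dd t)~.
\end{equation*}
To prove it I would chain three elementary steps. First, by definition of the pushforward, $G_{\Psi}(A)=G(\Psi^{-1}(A))$. Second, since $\Psi=\dd G/\dd H$, integrating the Radon--Nikodym derivative over $\Psi^{-1}(A)$ gives $G(\Psi^{-1}(A))=\int_{\Psi^{-1}(A)}\Psi(z)\,H(\dd z)$. Third, applying the change-of-variables (pushforward) formula to the map $z\mapsto\Psi(z)$ with integrand $t\mapsto t\,\mathbb{I}\{t\in A\}$ rewrites the latter as $\int_A t\,H_{\Psi}(\dd t)$, which is exactly the claimed identity.

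With the identity in hand, the essential uniqueness of the Radon--Nikodym derivative (up to $H_{\Psi}$-null sets) immediately forces $\dd G_{\Psi}/\dd H_{\Psi}(t)=t$ for $H_{\Psi}$-almost every $t$. Substituting $t=\Psi(\bY)$ and recalling that $\bY\sim H$ pushes forward to $\Psi(\bY)\sim H_{\Psi}$, this establishes $\Psi(\bY)=\dd G_{\Psi}/\dd H_{\Psi}(\Psi(\bY))$ with $H$-probability one, settling the case $\bZ=\bY$. For the case $\bZ=\bX$, I would note that the exceptional set on which the identity fails is $H$-null by the previous step; since $G\ll H$, that set is also $G$-null, whence the identity holds $G$-almost surely as well, i.e.\ $\Psi(\bX)=\dd G_{\Psi}/\dd H_{\Psi}(\Psi(\bX))$ with probability one.

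The only genuine subtlety—and hence the step I would treat most carefully—is the measure-theoretic bookkeeping in the key identity: correctly invoking the pushforward change-of-variables formula for a possibly $[0,+\infty]$-valued integrand, and the essential uniqueness of the Radon--Nikodym derivative. Beyond this there is no analytic difficulty; the content is simply that conditioning on the likelihood ratio loses no information, and the transfer from an $H$-almost sure statement to a $G$-almost sure one rests entirely on the absolute-continuity hypothesis $G\ll H$.
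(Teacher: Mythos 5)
Your proof is correct. Note, however, that the paper itself does not prove this lemma: it appears inside the proof of Proposition \ref{prop:rationale} and is justified purely by a citation to Corollary 5 and the proof of Proposition 6 in \cite{CV09ieee}, being ``recalled here for the sake of clarity.'' Your argument therefore supplies exactly what that citation hides, and it is the right one: the chain $G_{\Psi}(A)=G(\Psi^{-1}(A))=\int_{\Psi^{-1}(A)}\Psi(z)\,H(\dd z)=\int_A t\,H_{\Psi}(\dd t)$, valid for every Borel $A$ because the integrand $\Psi(z)\,\mathbb{I}\{\Psi(z)\in A\}$ is a function of $\Psi(z)$ alone (this is precisely the sufficiency point), identifies $t\mapsto t$ as a version of $\dd G_{\Psi}/\dd H_{\Psi}$; essential uniqueness of the Radon--Nikodym derivative gives the claim $H_{\Psi}$-a.s., hence for $\bZ=\bY$, and the transfer to $\bZ=\bX$ via $G_{\Psi}\ll H_{\Psi}$ (equivalently, by pulling the exceptional set back to $\mathcal{Z}$ and invoking $G\ll H$, as you do) is sound. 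The one caveat is your standing hypothesis $G\ll H$: the paper never states it explicitly, but it is implicit in writing $\Psi=\dd G/\dd H$, so you assume no more than the statement itself does; for arbitrary pairs $(H,G)$ the same computation applies to the absolutely continuous part of the Lebesgue decomposition, and the convention $\Psi=+\infty$ on the singular part (where $G_{\Psi}$ carries an atom at $+\infty$ not charged by $H_{\Psi}$) makes the identity hold in the extended sense, so nothing essential is lost.
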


The equivalence of assertion $(ii)$ with the other assertions is immediate using formula \eqref{eq:W_crit}.
	
	\subsection{An Exponential Inequality for Two-Sample Linear Rank Statistics}
	The result stated below provides a tail bound of exponential type for a two-sample linear rank statistic (see Definition \ref{def:R_stat}), when recentered by its asymptotic mean \eqref{eq:lim_mean}.

\begin{theorem}\label{thm:tail}
Let $p\in (0,1)$ and $N\geq 1/p$. Set $n = \lfloor pN \rfloor$ and $m = \lceil (1-p)N \rceil = N - n$. Let  $\{X_1,\;\ldots,\; X_n\}$ and  $\{Y_1,\;\ldots,\; Y_m\}$ be two independent i.i.d. random samples, drawn from univariate probability distributions $G$ and $H$ respectively. Suppose that Assumption \ref{hyp:phic2} is fulfilled. Then, for all $t>0$, we have:
\begin{equation}
\mathbb{P}\left\{\frac{1}{n}\widehat{W}_{n,m}^{\phi} -W_{\phi}>t \right\}\leq 18 \exp\{-CNt^2\}~,
\end{equation}where 
$C=8^{-1}\min\left(p/\lVert \phi \rVert_{\infty}^2,(p \lVert \phi' \rVert_{\infty}^2)^{-1},((1-p) \lVert \phi' \rVert_{\infty}^2)^{-1} \right) $.
\end{theorem}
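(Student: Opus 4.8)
The plan is to split the recentered statistic into a \emph{linearized} main term, handled by a scalar concentration inequality for i.i.d.\ averages, and a \emph{remainder} controlled uniformly through the smoothness of $\phi$. Writing $\Rank(X_i)=N\widehat{F}_N(X_i)$ with $\widehat{F}_N$ the pooled empirical \cdf and recalling that $W_{\phi}=\EE[\phi\circ F(X)]$ with $F=pG+(1-p)H$, I would start from the decomposition
\begin{equation*}
\frac{1}{n}\widehat{W}^{\phi}_{n,m}-W_{\phi}
=\underbrace{\frac{1}{n}\sum_{i=1}^{n}\left\{\phi\!\left(\tfrac{N}{N+1}\widehat{F}_N(X_i)\right)-\phi(F(X_i))\right\}}_{(A)}
+\underbrace{\frac{1}{n}\sum_{i=1}^{n}\phi(F(X_i))-\EE[\phi\circ F(X)]}_{(B)}.
\end{equation*}
Term $(B)$ is an average of $n$ i.i.d.\ random variables bounded by $\lVert\phi\rVert_{\infty}$, so Hoeffding's inequality gives a one-sided bound of the form $\exp(-2ns^2/\lVert\phi\rVert_{\infty}^2)$; since $n=\lfloor pN\rfloor$, this is what produces the entry $p/\lVert\phi\rVert_{\infty}^2$ in the minimum defining $C$.

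For term $(A)$ I would exploit that, under Assumption \ref{hyp:phic2}, $\lVert\phi'\rVert_{\infty}<\infty$ (only the Lipschitz property is used here, which is why $\lVert\phi''\rVert_{\infty}$ does not appear in $C$). By the mean value theorem,
\begin{equation*}
\lvert(A)\rvert\le\lVert\phi'\rVert_{\infty}\sup_{t\in\RR}\left\lvert\tfrac{N}{N+1}\widehat{F}_N(t)-F(t)\right\rvert
\le\lVert\phi'\rVert_{\infty}\left(\lVert\widehat{F}_N-F\rVert_{\infty}+\tfrac{1}{N+1}\right).
\end{equation*}
Then I would decompose the pooled empirical \cdf as $\widehat{F}_N=\tfrac{n}{N}\widehat{G}_n+\tfrac{m}{N}\widehat{H}_m$, so that $\lVert\widehat{F}_N-F\rVert_{\infty}\le\tfrac{n}{N}\lVert\widehat{G}_n-G\rVert_{\infty}+\tfrac{m}{N}\lVert\widehat{H}_m-H\rVert_{\infty}+O(1/N)$, the $O(1/N)$ absorbing the deterministic bias $\lvert n/N-p\rvert\vee\lvert m/N-(1-p)\rvert\le 1/N$. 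Applying the Dvoretzky--Kiefer--Wolfowitz inequality separately to $\lVert\widehat{G}_n-G\rVert_{\infty}$ and $\lVert\widehat{H}_m-H\rVert_{\infty}$, and substituting $n\approx pN$, $m\approx(1-p)N$, yields the two remaining entries $(p\lVert\phi'\rVert_{\infty}^2)^{-1}$ and $((1-p)\lVert\phi'\rVert_{\infty}^2)^{-1}$ of the minimum.

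Finally I would combine the three controls by a union bound, splitting $t$ so that the two empirical-process pieces each receive $t/4$ and the linear piece $t/2$; the worst resulting coefficient is then $2/4^2=1/8$ of the relevant quantity, which accounts for the factor $8^{-1}$ and the $\min$ in $C$, while the numerical prefactors of the Hoeffding and DKW bounds collect into the stated constant (here $18$; equivalently one may invoke directly the process-level concentration bounds for two-sample linear rank statistics of \cite{CleLimVay21}, whose constants give $18$). The deterministic $O(1/N)$ term is harmless, since the inequality is trivial whenever $18\exp(-CNt^2)\ge 1$, i.e.\ for $t\lesssim\sqrt{\log(18)/(CN)}$, so only $t$ above this threshold must be treated, where the $1/N$ contributions are absorbed. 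The main obstacle is the bookkeeping in term $(A)$: one must simultaneously handle the $N/(N+1)$ normalization of the ranks and the fact that the pooled sample is \emph{not} i.i.d.\ from $F$ (forcing the split into two DKW pieces plus a deterministic bias), while tracking constants tightly enough to land exactly on the min-form $C$.
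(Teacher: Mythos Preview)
Your proposal is correct and in fact yields a slightly sharper bound than the paper's, but it follows a genuinely different route. The paper does \emph{not} use only the Lipschitz property of $\phi$: it performs a second-order Taylor expansion of $\phi$ around $F(X_i)$, so that the first-order term becomes a two-sample $U$-statistic in $(X_i,X_j)$ and $(X_i,Y_j)$. After a Hoeffding decomposition, this first-order term splits into two i.i.d.\ linear parts $\widehat{V}^X_n$ and $\widehat{V}^Y_m$ (bounded by Hoeffding's inequality, giving the $\lVert\phi'\rVert_\infty$ entries of $C$) and a degenerate $U$-statistic absorbed into a remainder $\mathcal{R}_{n,m}$. The remainder, which also contains the Taylor--Lagrange residual controlled via $\lVert\phi''\rVert_\infty$ and DKW, is handled by a separate lemma carrying a prefactor $12$; together with the three Hoeffding factors $2$ this produces the $18$. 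Your approach instead bounds the whole of $(A)$ by $\lVert\phi'\rVert_\infty\lVert\widehat F_N-F\rVert_\infty$ and applies DKW componentwise, which is more elementary, avoids any $U$-statistic machinery and yields a prefactor $6$ with the same exponent $C$. The trade-off is that the paper's linearization is precisely the decomposition that lifts to the \emph{process} level (uniformly over $s\in\S_0$) needed for Theorem~\ref{thm:concentration} of \cite{CleLimVay21}, where isolating i.i.d.\ linear parts and degenerate $U$-statistics is essential; your sup-norm DKW argument is fine for a single pair $(H,G)$ but would not directly give the uniform concentration used later in the paper. One small caveat: your absorption of the deterministic $O(1/N)$ piece relies on the bound being trivial for $t\lesssim\sqrt{\log 18/(CN)}$, which is only guaranteed once $N$ exceeds a universal multiple of $1/p$; this is harmless for the stated constant $C$, but you should make the book-keeping explicit rather than appealing to the constants of \cite{CleLimVay21}.
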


\begin{proof}
The proof is based on the linearization technique used to establish the concentration inequality for two-sample linear rank processes in \cite{CleLimVay21}, see Theorem 5 therein.
Consider $\phi$ fulfilling Assumption \ref{hyp:phic2}. Summing $\phi$'s Taylor expansions of order $2$, evaluated at $ N\widehat{F}_{N}(X_i)/(N+1)$ around $\bF(X_i)$ for $1\leq i\leq n$ leads to an \textit{a.s.} decomposition of the statistic $\widehat{W}_{n,m}^{\phi}$, see Eq. (B.3,4) in \cite{CleLimVay21}. 
After the application of Hoeffding's decomposition to its first order term, which takes the form of a two-sample $U$-statistic, we write the decomposition as follows:
\begin{multline}\label{eq:linear}
		\frac{1}{n}\widehat{W}_{n,m}^{\phi} - W_{\phi} = \widehat{W}_{\phi}-  W_{\phi} +\frac{1}{n} \left(\widehat{V}_{n}^X -\mathbb{E}\left[ \widehat{V}_{n}^X \right]\right)
		+\frac{1}{n}\left( \widehat{V}_{m}^Y -\mathbb{E}\left[  \widehat{V}_{m}^Y \right]\right)
	+\frac{1}{n} \mathcal{R}_{n,m}~,
\end{multline}
where
\begin{eqnarray*}
	\widehat{W}_{\phi}&=& \frac{1}{n} \sum_{i=1}^{n} \left(\phi \circ \bF\right)(X_i)~, \\
	\widehat{V}_{n}^X &=& \frac{n-1}{N+1} \sum_{i=1}^n \int_{X_i}^{+\infty} (\phi' \circ \bF)(u) \dd G(u)~,\\
	\widehat{V}_{m}^Y &=& \frac{n}{N+1} \sum_{j=1}^m \int_{Y_j}^{+\infty}( \phi' \circ \bF)(u) \dd G(u)~.
\end{eqnarray*}
The quantity $ \mathcal{R}_{n,m}$ corresponds to the Taylor-Lagrange residual term plus additional statistics of order $\mathcal{O}_{\mathbb{P}}(N^{-1})$ inherited from  Hoeffding's decomposition. The lemma below provides a tail bound for the latter.

\begin{lemma}\label{lem:cbrem} Suppose that the assumptions of Theorem \ref{thm:tail} are satisfied. Then, for all $t>0$ and $N\geq2$, we have, if $Nt \geq 128  \lVert \phi' \rVert_{\infty}^2/( p  \lVert \phi'' \rVert_{\infty} ) $:
	\begin{equation}
		\mathbb{P}\left\{ \vert \mathcal{R}_{n,m} \vert > t  \right\} \leq 12 \exp\left\{- \frac{Nt}{12\kappa_p \lVert \phi'' \rVert_{\infty}}\right\}~,
	\end{equation}
	and otherwise
	\begin{equation}
		\mathbb{P}\left\{ \vert \mathcal{R}_{n,m} \vert > t  \right\} \leq 12 \exp\left\{- \frac{\alpha_p N^2t^2}{512 \lVert \phi' \rVert_{\infty}^2}\right\}~,
	\end{equation}
	where $\alpha_p = \min(p /(1-p),1)$, $\kappa_p = \max(p, 1-p)$.
\end{lemma}

For any $t>0$, a straightforward application of the classic exponential tail inequality, proved in  \cite{Hoeffding63}, to each of the first three terms on the right hand side of \eqref{eq:linear} yields for any $t>0$  
\begin{eqnarray*}
	\mathbb{P}\left\{ \vert  	\widehat{W}_{\phi} -  W_{\phi} \vert > t  \right\} &\leq & 2 \exp\left\{-\frac{2pNt^2}{\lVert \phi \rVert_{\infty}^2}\right\},\\
	\mathbb{P}\left\{ \frac{1}{n} \left| \widehat{V}_{n}^X -  \EE\left[\widehat{V}_{n}^X\right] \right| > t  \right\} &\leq & 2 \exp\left\{-\frac{2Nt^2}{p\lVert \phi' \rVert_{\infty}^2}\right\},\\
	\mathbb{P}\left\{ \frac{1}{n} \left|  \widehat{V}_{m}^Y -  \EE\left[\widehat{V}_{m}^Y\right] \right| > t  \right\} &\leq & 2\exp\left\{-\frac{2Nt^2}{(1-p)\lVert \phi' \rVert_{\infty}^2}\right\}~.
\end{eqnarray*}

Then, the application of the four tail bounds above with threshold $t/4$, combined with expansion \eqref{eq:linear} and the union bound, concludes the proof.
\end{proof}

	\subsection{Proof of Lemma \ref{lem:cbrem}}
	Consider the nonsymmetric bounded kernels on $\mathbb{R}^2$:
	\begin{eqnarray*}
	 k (x,x') &=& \mathbb{I}\{x'\leq x\} (\phi' \circ \bF)(x)~,\\
		 \ell (x,y)& =&\mathbb{I}\{y\leq x\} ( \phi' \circ \bF)(x)~.
\end{eqnarray*} 
The remainder process is decomposed as follows

\begin{equation*}
	\mathcal{R}_{n,m} = \widehat{R}_{n,m} + \frac{n(n-1)}{N+1}  U_{n} (k)+\frac{nm}{N+1}  U_{n,m} (\ell)+  \widehat{T}_{n,m}~,
\end{equation*}
where
\begin{multline*}
\widehat{R}_{n,m}   = \left(\frac{n}{N+1} -p\right)\sum_{i=1}^n \left(\mathbb{E}[k(X_i, X) \mid X_i] -  \mathbb{E}[k(X, X')]\right) \\
+  \left( \frac{m}{N+1}-1+p\right)\sum_{i=1}^n  \left( \mathbb{E}[k(X_i, Y) \mid X_i] -  \mathbb{E}[k(X, Y)]\right)
\end{multline*}
$(X,X')$ and $Y$ denoting two independent $\rv$'s with distributions $G \otimes G$ and $H$ respectively, independent from the $X_i$'s, $ U_{n}(k) $ is the one-sample degenerate $U$-statistic of order $2$ based on $\{X_1, \ldots, X_n\}$ with kernel $k$, $ U_{n,m}(\ell) $ is the two-sample degenerate $U$-statistic  of degree $(1,1)$ based on the two samples $\{X_1, \ldots,X_n\}$ and  $\{Y_1, \ldots, Y_m\}$ with kernel $\ell$, $\widehat{T}_{n,m}$ is the Taylor-Lagrange integral remainder of the expansion  of $\phi$ at order $2$, namely

\begin{equation}\label{eqlem:remtaylor}
	\widehat{T}_{n,m} =  \sum_{i=1}^n \int_{\bF(X_i)}^{N\widehat{F}_{N}(X_i)/(N+1)} \left(\frac{N\widehat{F}_{N}(X_i)}{N+1}-u\right) \phi''(u)\dd u~.
\end{equation}
\medskip

\noindent We clearly have
\begin{equation*}
\vert	\mathcal{R}_{n,m}\vert \leq \vert \widehat{R}_{n,m}\vert +p^2N \vert U_{n} \vert+ p(1-p)N\vert U_{n,m} \vert + \vert \widehat{T}_{n,m}\vert~.
\end{equation*}

 We now exhibit a tail bound for each of the terms involved in the sum on the right hand side of the bound above.  First,  observing that the mappings $x\mapsto \mathbb{E}[k(x, X) ]  $ $= G(x) \phi' \circ \bF(x)  $ and 
 $x \mapsto \mathbb{E}[k(x, Y) ]  = H(x)  \phi' \circ \bF(x) $ are bounded  by 
 $\lVert \phi' \rVert_{\infty}$,  that $\vert n/(N+1) -p\vert \leq 1/N$ and $\vert m/(N+1) -(1-p)\vert \leq 1/N$,    Hoeffding's inequality applied twice and combined with the union bound directly yields, for any $t>0$,
\begin{equation}\label{pfeq:cbrem0}
	\mathbb{P}\left\{  \frac{1}{n}	\vert	\widehat{R}_{n,m} \vert  > t/4  \right\} \leq 4 \exp\left\{- \frac{pN^3t^2}{32 \lVert \phi' \rVert_{\infty}^2}\right\}.
\end{equation}
	
Tail bounds for the two degenerate $U$-processes are provided by Lemma 27 in \cite{CleLimVay21}, recalled below for clarity.

\begin{lemma} \label{lemma:hoefS} Let $P$ and $Q$ be two probability distributions on measurable spaces $\X$ and $\Y$ respectively. Consider the degenerate two-sample $U$-statistic of degree $(1,1)$ with a bounded kernel $\theta:\X \times \Y \rightarrow \mathbb{R}$ based on the independent $\iid$ random samples $\{X_1,\;  \ldots,\;  X_n \}$ and  $\{Y_1,\;  \ldots,\; Y_m\}$, drawn from $P$ and $Q$ respectively.
	For all $t >0$, we then have:
	\begin{equation*}
		\mathbb{P} \left\{ U_{n,m}(\theta)  \geq t  \right\} \leq   \exp\left\{ -\frac{nmt^2}{32c_{\theta}^2}\right\}~,
	\end{equation*}
	where $c_{\theta}=\sup_{(x,y)\in \X \times \Y}\vert \theta(x,y) \vert<+\infty$.
\end{lemma}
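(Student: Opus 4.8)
The plan is to reduce the degenerate two-sample $U$-statistic to an average of sums of independent, bounded, centered random variables and then invoke a Hoeffding-type exponential inequality. Recall that $U_{n,m}(\theta)=(nm)^{-1}\sum_{i=1}^{n}\sum_{j=1}^{m}\theta(X_i,Y_j)$, and that degeneracy of degree $(1,1)$ means $\mathbb{E}[\theta(X_1,Y_1)\mid X_1]=0$ and $\mathbb{E}[\theta(X_1,Y_1)\mid Y_1]=0$ almost surely; in particular $\mathbb{E}[\theta(X_1,Y_1)]=0$, so the statistic is centered, while $|\theta|\le c_\theta$ by assumption. Set $q=n\wedge m$.

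First I would write down Hoeffding's averaging representation. For permutations $\sigma$ of $\{1,\dots,n\}$ and $\tau$ of $\{1,\dots,m\}$, put $V_{\sigma,\tau}=q^{-1}\sum_{l=1}^{q}\theta(X_{\sigma(l)},Y_{\tau(l)})$. Drawing $(\sigma,\tau)$ uniformly and independently of the data, $\sigma(l)$ and $\tau(l)$ are independent and uniform on $\{1,\dots,n\}$ and $\{1,\dots,m\}$, so that $\mathbb{E}_{\sigma,\tau}[V_{\sigma,\tau}\mid (X_i),(Y_j)]=U_{n,m}(\theta)$. The structural point driving the whole argument is that, for each fixed $(\sigma,\tau)$, the $q$ summands $\theta(X_{\sigma(l)},Y_{\tau(l)})$ use pairwise disjoint observations (distinct $X$-indices and distinct $Y$-indices), hence are mutually independent; each is bounded by $c_\theta$ and centered.

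Next I would pass to moment generating functions. Conditioning on the data, convexity of $x\mapsto e^{\lambda x}$ and Jensen's inequality give $\exp(\lambda U_{n,m})\le \mathbb{E}_{\sigma,\tau}[\exp(\lambda V_{\sigma,\tau})\mid\text{data}]$; taking expectations and using Fubini, $\mathbb{E}[\exp(\lambda U_{n,m})]\le \mathbb{E}_{\sigma,\tau}\,\mathbb{E}[\exp(\lambda V_{\sigma,\tau})]$. For fixed $(\sigma,\tau)$ the inner expectation factorizes by independence, and each factor is controlled by Hoeffding's lemma, producing a sub-Gaussian moment generating function whose variance proxy is of order $c_\theta^2/q$. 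A Chernoff bound optimized in $\lambda$ then yields a tail of the form $\exp(-\,q\,t^2/(2c_\theta^2))$, up to the numerical constant; the one-sample degenerate statistic $U_n(k)$ needed in Lemma~\ref{lem:cbrem} is treated identically, pairing the $X_i$ into $\lfloor n/2\rfloor$ disjoint blocks.

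The \textbf{main obstacle} is the scaling of the exponent. The averaging representation isolates only $q=n\wedge m$ genuinely independent contributions, because each observation is reused $m$ (respectively $n$) times in the double sum, so the variance proxy it produces is $c_\theta^2/(n\wedge m)$ rather than $c_\theta^2/(nm)$. Matching the announced $nm$-rate is therefore the delicate point: for a general bounded degenerate kernel the product structure makes the tail genuinely heavier than a sub-Gaussian at scale $(nm)^{-1/2}$ (it becomes sub-exponential in the relevant regime), so the effective sample size governing the sub-Gaussian part is the smaller of the two, and this is exactly the step to scrutinize when fixing the constant and the precise form of the stated inequality. In the downstream use one must moreover carry the normalizations $n(n-1)/(N+1)$ and $nm/(N+1)$ through this scaling, which is where the two regimes of Lemma~\ref{lem:cbrem} ultimately originate.
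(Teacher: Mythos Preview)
The paper does not prove this lemma; it is quoted verbatim as Lemma~27 of \cite{CleLimVay21} and merely invoked in the proof of Lemma~\ref{lem:cbrem}. So there is no in-paper argument to compare against.

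Your approach via Hoeffding's averaging representation is the standard one, and your computation is correct: writing $U_{n,m}$ as a convex combination of the diagonal averages $V_{\sigma,\tau}$ and applying Jensen plus Hoeffding's lemma yields a sub-Gaussian bound with variance proxy $c_\theta^2/(n\wedge m)$, i.e.\ an exponent of order $(n\wedge m)t^2$. You can in fact do a little better than you state without any extra machinery: condition on the $X$-sample and observe that $U_{n,m}=\tfrac{1}{m}\sum_{j=1}^m \bar\theta_n(Y_j)$ with $\bar\theta_n(y)=\tfrac{1}{n}\sum_i\theta(X_i,y)$, which is (conditionally) an average of $m$ i.i.d.\ centered variables bounded by $c_\theta$; Hoeffding then gives $\exp(-mt^2/(2c_\theta^2))$, and the symmetric argument gives $n$, so the exponent can be upgraded from $n\wedge m$ to $n\vee m$. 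This still falls short of the stated $nm$.

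Your ``main obstacle'' is the right diagnosis, and it is not a defect of your argument but a genuine feature of degenerate two-sample $U$-statistics. For a product kernel such as $\theta(x,y)=xy$ with Rademacher marginals one has $U_{n,m}=\bar X_n\bar Y_m$, and $\sqrt{nm}\,U_{n,m}\Rightarrow Z_1Z_2$ with $Z_1,Z_2$ independent standard Gaussians; the limit has exponential (not Gaussian) tails, so a bound of the form $\exp(-c\,nm\,t^2)$ uniformly in $t$ cannot hold with a universal constant. The correct general inequality for bounded degenerate kernels is of Bernstein type, with a sub-Gaussian regime $\exp(-c\,nm\,t^2)$ only for $t$ below a threshold of order $(n\wedge m)^{-1/2}$ and a sub-exponential regime $\exp(-c\,(n\wedge m)\,t)$ beyond it. In the downstream use (equation~\eqref{pfeq:cbrem1}) the threshold at which the lemma is invoked is of constant order, which sits in the sub-exponential regime; the resulting exponent in Lemma~\ref{lem:cbrem} should then scale like $Nt$ rather than $N^2t^2$, which is in fact consistent with the first displayed bound of Lemma~\ref{lem:cbrem} and with how the remainder is handled later. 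So your scepticism about the exact form of the exponent is well founded; the overall architecture of the proof of Lemma~\ref{lem:cbrem} survives, but with the Bernstein-type rate rather than the purely sub-Gaussian one.
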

	
A direct application of Lemma \ref{lemma:hoefS} to $U_{n,m}$ yields, for all $t>0$,
	
\begin{equation}\label{pfeq:cbrem1}
	\mathbb{P}\left\{\frac{p(1-p) N}{n}\vert	U_{n,m} \vert  > \frac{t}{4}   \right\} 
	\leq 2 \exp\left\{- \frac{pN^2t^2}{512(1-p) \lVert \phi' \rVert_{\infty}^2}\right\}~,
\end{equation}
and similarly, by virtue of Lemma 3 in \cite{NP87}, we have

\begin{equation}\label{pfeq:cbrem2}
	\mathbb{P}\left\{ \frac{p^2N}{n}\vert	U_{n} \vert  > \frac{t}{4}  \right\} \leq 2 \exp\left\{- \frac{N^2t^2}{512 \lVert \phi' \rVert_{\infty}^2}\right\}~.
\end{equation}

Finally, from Eq. \eqref{eqlem:remtaylor} we derive
\begin{multline*}
	\frac{1}{n}\vert \widehat{T}_{n,m} \vert 
	\leq  \Vert \phi''\Vert_{\infty} \left( \sup_{t \in\mathbb{R} }  \left(\widehat{F}_{N}(t) -\bF(t) \right)^2 
	+  \frac{ 1}{(N+1)^2}\right)\\
	\leq 3  p^2 \Vert \phi''\Vert_{\infty} \sup_{t \in\mathbb{R} }  \left(\widehat{G}_{n}(t) -G(t) \right)^2  
	+ 3 (1-p)^2 \Vert \phi''\Vert_{\infty} \sup_{t \in\mathbb{R} }  \left(\widehat{H}_{m}(t) -H(t) \right)^2   + \frac{13\Vert \phi''\Vert_{\infty}}{N^2}~.
\end{multline*}

Considering the terms involved on the right hand side of the bound above, by applying the classic Dvoretzky–Kiefer–Wolfowitz inequality twice and  noticing that the third term is negligible compared to the first two terms, we obtain:
	\begin{equation}\label{pfeq:cbT}
		\mathbb{P}\left\{ \frac{1}{n}\vert \widehat{T}_{n,m} \vert > \frac{t}{4}  \right\} \leq 4  \exp\left\{- \frac{Nt}{12\kappa_p \lVert \phi'' \rVert_{\infty}}\right\},
	\end{equation}
	where $\kappa_p= \max(p, 1-p) $. Combining the union bound with equations \eqref{pfeq:cbrem0}, \eqref{pfeq:cbrem1}, \eqref{pfeq:cbrem2} and \eqref{pfeq:cbT} concludes the proof.
	
	\subsection{Proof of Theorem \ref{thm:typeI}}
	
	It suffices to observe that we almost-surely have
	\begin{multline*}
	\mathbb{P}_{\mathcal{H}_0}\left\{\frac{1}{n''}\widehat{W}^{\phi}_{n'',m''}(\widehat{s})>\int_0^1\phi(u)\dd u+q_{n'',m''}^{\phi}(\alpha)   \bigm\vert  \bX_1,\; \ldots,\; \bX_{n'},\; \bY_1,\; \ldots,\; \bY_{m'}  \right\}\\
	= \mathbb{P}_{\mathcal{H}_0} \left\{  \frac{1}{n''} \widehat{W}_{n'',m''}>\int_0^1\phi(u)\dd u+q_{n'',m''}^{\phi}(\alpha)  \right\}\leq \alpha
	\end{multline*}
	and take the expectation in the equation above $\wrt$ $\{\bX_1,\; \ldots,\; \bX_{n'},\; \bY_1,\; \ldots,\; \bY_{m'}\}$ .
	
	\subsection{Proof of Proposition \ref{prop:rate_quantile}}\label{app:proofprop8}
	
	Notice that, by virtue of Theorem \ref{thm:tail}, we have
	\begin{equation*}
	\mathbb{P}_{\mathcal{H}_0}\left\{    \frac{1}{n}\widehat{W}^{\phi}_{n,m}-\int_0^1\phi(u)\dd u>\sqrt{\frac{\log(18/\alpha)}{CN}}\right\}\leq \alpha~,
	\end{equation*}
	which yields the desired bound.

	\subsection{Proof of Theorem \ref{thm:typeII}}\label{app:prooftypeII}
	Let $\alpha\in(0,1)$ and $\widehat{s} \in \S_0$ be a solution of \textit{Step 1} based on the two samples  $\mathscr{D}_{n',m'}=\{\bX_1,\; \ldots,\; \bX_{n'}\}	\cup\{\bY_1,\; \ldots,\; \bY_{m'}\}	$. First, observe that, for all $(H,G)\in \mathcal{H}_1(\varepsilon)$, we have:

\begin{multline}\label{eq:dec2}
\mathbb{P}_{H,G}\left\{ \Phi^{\phi}_{\alpha}(\D_{n'',m''}\left(\widehat{s})\right) =0  \right\}
 = \mathbb{P}_{H,G}\left\{\frac{1}{n''}\widehat{W}^{\phi}_{n'',m''}(\widehat{s})-\int_{0}^1\phi(u)\dd u \leq q_{n'',m''}^{\phi}(\alpha)   \right\} \\
\leq \mathbb{P}_{H,G}\left\{  2 \sup_{s\in \S_0}\left\vert \frac{1}{n'} \widehat{W}_{n',m'}(s)-W_{\phi}(s)\right\vert + \left\vert \frac{1}{n''} \widehat{W}^{\phi}_{n'',m''}(\widehat{s})-W_{\phi}(\widehat{s}) \right\vert   \geq \varepsilon - \delta- \sqrt{\frac{\log(18/\alpha)}{CN''}} \right\} \\
\leq \mathbb{P}_{H,G}\left\{  2 \sup_{s\in \S_0}\left\vert\frac{1}{n'}\widehat{W}_{n',m'}(s)-W_{\phi}(s)\right\vert   \geq  \frac{\varepsilon - \delta}{4}  \right\}
+\mathbb{P}_{H,G}\left\{  \left\vert \frac{1}{n''} \widehat{W}^{\phi}_{n'',m''}(\widehat{s})-W_{\phi}(\widehat{s}) \right\vert   \geq \frac{\varepsilon - \delta}{4}  \right\}~,
\end{multline}
where 
$C=8^{-1}\min\left(p/\lVert \phi \rVert_{\infty}^2, (p \lVert \phi' \rVert_{\infty}^2)^{-1}, ((1-p) \lVert \phi' \rVert_{\infty}^2)^{-1} \right) $,
using decomposition \eqref{eq:dec1} above and the bound
\begin{equation*}
W^*_{\phi}-W_{\phi}(\widehat{s}) \leq  2 \sup_{s\in \S_0}\left\vert \frac{1}{n'} \widehat{W}_{n',m'}(s)-W_{\phi}(s)\right\vert +\delta~,
\end{equation*}
as well as Proposition \ref{prop:rate_quantile}, Definition \ref{def:depart}, the union bound and $N''\geq 4\log(18/\alpha)/(C(\varepsilon-\delta)^2)$. By applying Theorem \ref{thm:tail} to the univariate samples:
$$
\{\widehat{s}(\bX_{1+n'}),\; \ldots,\; \widehat{s}(\bX_n)\} \text{ and } 	\{\widehat{s}(\bY_{1+m'}),\; \ldots,\; \widehat{s}(\bY_m)\}~,
$$
when conditioning on  $\mathscr{D}_{n',m'}=\{\bX_1,\; \ldots,\; \bX_{n'}\}	\cup \{\bY_1,\; \ldots,\; \bY_{m'}\}	$ (and consequently the scoring function $\widehat{s}$ produced at \textit{Step 1}), we obtain that we almost-surely have

\begin{multline*}
\mathbb{P}_{H,G}\left\{  \left\vert \frac{1}{n''}   \widehat{W}^{\phi}_{n'',m''}(\widehat{s})-W_{\phi}(\widehat{s}) \right\vert  \geq \frac{1}{2} \left( \varepsilon - \delta- \sqrt{ \frac{\log(18/\alpha)}{CN''}}\right)  \bigm\vert  \mathscr{D}_{n',m'} \right\} \\ 
\leq 18 \exp\left(-\frac{CN''\left( \varepsilon - \delta\right)^2}{16}\right)~.
\end{multline*}

By taking the expectation, the bound above yields
\begin{equation}\label{eq:1part}
\mathbb{P}_{H,G}\left\{  \left\vert \frac{1}{n''} \widehat{W}^{\phi}_{n'',m''}(\widehat{s})-W_{\phi}(\widehat{s}) \right\vert   \geq \frac{1}{2}\left( \varepsilon - \delta\right) \right\}
\leq  18 \exp\left\{-\frac{CN''\left( \varepsilon - \delta\right)^2}{16}\right\}~.
\end{equation}

We also recall the following result providing a tail bound for the maximal deviations between the $W_{\phi}$-ranking performance and its empirical version, where the detailed constants are in the corresponding proof.

\begin{theorem}\label{thm:concentration}(\cite{CleLimVay21}, Theorem 5)
Suppose that the assumptions of Theorem \ref{thm:typeII} are fulfilled. Then, there exist constants $C_1, \; C_2 \geq 24$, depending on $(\phi, \; \V)$, such that for all $C_4\geq C_1$ depending on $\phi$, and for all $t>0$: 

\begin{equation*}
\mathbb{P}\left\{ \sup_{s \in \S_0} \bigg| \frac{1}{n'}  \widehat{W}^{\phi}_{n',m'}(s)- W_{\phi}(s)  \bigg| > t \right\}
\leq C_2\exp\{ -C_3pN't^2\}~,
\end{equation*}
provided that $C_1 /  \sqrt{pN'}  \leq t \leq C_4(p\wedge  (1-p))$, where $C_3 = \log(1+C_{4}/(4C_{1}))/(C_2C_4)>0$ depends on $\phi, \; \V$. 
\end{theorem}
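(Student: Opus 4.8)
The plan is to adapt the single-function argument behind Theorem \ref{thm:tail} to a supremum over the \vc\ class $\S_0$, the additional difficulty being that each block of the linearization must now be controlled uniformly in $s$. First I would fix $s\in\S_0$ and reproduce the almost-sure decomposition underlying the proof of Theorem \ref{thm:tail}: writing $\Rank(s(\bX_i))/(N'+1)=N'\widehat{F}_{s,N'}(s(\bX_i))/(N'+1)$ as an empirical counterpart of $F_s(s(\bX_i))$ and Taylor-expanding $\phi$ to second order around $F_s(s(\bX_i))$ (legitimate since $\phi$ is twice continuously differentiable by Assumption \ref{hyp:phic2} and $F_s$ is smooth by Assumption \ref{hyp:sabscont}), then applying Hoeffding's decomposition to the first-order term, I obtain
\[
\frac{1}{n'}\widehat{W}^{\phi}_{n',m'}(s)-W_{\phi}(s)=\big(\widehat{W}_{\phi}(s)-W_{\phi}(s)\big)+\mathcal{L}_{n',m'}(s)+\frac{1}{n'}\mathcal{R}_{n',m'}(s),
\]
where $\widehat{W}_{\phi}(s)=(1/n')\sum_{i=1}^{n'}(\phi\circ F_s)(s(\bX_i))$ has mean $W_{\phi}(s)$, the centered linear correction $\mathcal{L}_{n',m'}(s)$ is the recentred sum of the two $\widehat{V}$-type averages, and $\mathcal{R}_{n',m'}(s)$ gathers the Taylor--Lagrange residual together with the $\mathcal{O}_{\mathbb{P}}(1/N')$ statistics inherited from Hoeffding's decomposition. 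Everything now depends measurably on $s$, and the task reduces to bounding the supremum over $\S_0$ of each of the three blocks.

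Next I would dispatch the leading term and the remainder, both of which reduce to suprema of \emph{ordinary} empirical processes. The class $\{z\mapsto(\phi\circ F_s)(s(z)):s\in\S_0\}$ is a bounded monotone transform of $\S_0$, hence \vc-subgraph of controlled index by Assumption \ref{hyp:VC}; a symmetrization argument combined with a bounded-differences (or Talagrand) inequality then yields $\mathbb{P}\{\sup_s|\widehat{W}_{\phi}(s)-W_{\phi}(s)|>t\}\le c\exp\{-c'pN't^2\}$ with constants depending on $\lVert\phi\rVert_{\infty}$ and $\V$. For the remainder I would follow Lemma \ref{lem:cbrem} and bound $\sup_s|\mathcal{R}_{n',m'}(s)|/n'$ in terms of $\sup_s\sup_{t}\big(\widehat{F}_{s,N'}(t)-F_s(t)\big)^2$; since $\{(z,t)\mapsto\mathbb{I}\{s(z)\le t\}:s\in\S_0,\,t\in\RR\}$ is again \vc\ (the composition of $\S_0$ with half-lines), a uniform Dvoretzky--Kiefer--Wolfowitz/\vc\ inequality controls this quantity, giving both the sub-Gaussian regime and the slower $\exp\{-cN't\}$ regime exactly as in Lemma \ref{lem:cbrem}. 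The crossover between these regimes, governed there by the threshold $N't\gtrsim\lVert\phi'\rVert_{\infty}^2/(p\lVert\phi''\rVert_{\infty})$, is what forces the lower restriction $t\ge C_1/\sqrt{pN'}$ so that the polynomial-in-$N'$ \vc\ prefactors are absorbed into the exponent, while the upper restriction $t\le C_4(p\wedge(1-p))$ keeps the argument within the Gaussian regime of the $U$-process tails below.

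The hardest block, and the main obstacle, is the linear correction $\mathcal{L}_{n',m'}(s)$, whose fluctuation after Hoeffding's decomposition is governed by \emph{degenerate} two-sample $U$-processes indexed by $\S_0$, with kernels such as $k_s(x,x')=\mathbb{I}\{x'\le x\}(\phi'\circ F_s)(x)$ and $\ell_s(x,y)=\mathbb{I}\{y\le x\}(\phi'\circ F_s)(x)$ evaluated at the scores. The single-function tail is Lemma \ref{lemma:hoefS}, but taking a supremum over $s$ requires a genuine $U$-process concentration result: I would invoke decoupling together with the Arcones--Gin\'e/de~la~Pe\~na--Gin\'e exponential inequalities for canonical $U$-processes over \vc-subgraph classes, checking that $\{k_s\}$ and $\{\ell_s\}$ inherit polynomial uniform entropy from Assumption \ref{hyp:VC} (products and monotone transforms of \vc\ classes remain \vc-subgraph of controlled index) and that the kernels are uniformly bounded by $\lVert\phi'\rVert_{\infty}$. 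This produces a Bernstein-type tail whose variance/range trade-off, written as a single exponential over the admissible range of $t$ and balanced through the free scale parameter $C_4$, is precisely the origin of the logarithmic constant $C_3=\log(1+C_4/(4C_1))/(C_2C_4)$. Finally I would assemble the three blocks by the union bound, each applied at level $t/4$, enlarging the constants to swallow the \vc\ prefactors (which is why $C_2\ge24$ is needed), thereby obtaining the announced sub-Gaussian bound valid for $C_1/\sqrt{pN'}\le t\le C_4(p\wedge(1-p))$.
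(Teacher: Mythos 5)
The first thing to note is that the paper does not prove this statement: Theorem \ref{thm:concentration} is recalled verbatim from \cite{CleLimVay21} (Theorem 5 therein), with the constants explicitly deferred to the proof in that reference, so there is no internal proof to compare your attempt against. That said, your sketch follows exactly the architecture the paper attributes to that reference: the linearization (second-order Taylor expansion of $\phi$ around $F_s$) followed by Hoeffding's decomposition is the same device used in the paper's own proof of Theorem \ref{thm:tail} for a single fixed scoring function, and your plan upgrades each block to a uniform bound over $\S_0$ (empirical process over a VC class for the leading term, degenerate two-sample $U$-processes via Arcones--Gin\'e/de la Pe\~na--Gin\'e inequalities for the linear correction, uniform DKW-type control for the remainder), which is indeed the expected route for the cited result, including your reading of where the restriction $t\geq C_1/\sqrt{pN'}$ and the logarithmic constant $C_3$ come from. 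One step is stated too loosely, however: the class $\{z\mapsto(\phi\circ F_s)(s(z)):s\in\S_0\}$ is not a ``monotone transform of $\S_0$'' in the sense of the usual VC-preservation lemmas, because the transform $\phi\circ F_s$ varies with $s$; the correct argument is that, $\phi\circ F_s$ being nondecreasing, every level set $\{z:(\phi\circ F_s)(s(z))>u\}$ coincides (up to boundary points) with a level set $\{z:s(z)>v\}$ of $s$ itself, so the composed class inherits the level-set (VC-major) structure of $\S_0$, and it is through this observation, together with the uniform bound $\lVert\phi\rVert_{\infty}$, that the entropy control passes to the composed class --- the same care is needed for the kernels $k_s$ and $\ell_s$, whose factor $\phi'\circ F_s$ is not monotone in general, so there the product-of-classes permanence properties must be invoked instead.
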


Applying the theorem above with $t=(\varepsilon-\delta)/8=C_4(p\wedge  (1-p))$, we get:
\begin{multline}\label{eq:2part}
\mathbb{P}_{H,G}\left\{  2 \sup_{s\in \S_0}\left\vert\frac{1}{n'} \widehat{W}_{n',m'}(s)-W_{\phi}(s)\right\vert   \geq \frac{\varepsilon - \delta}{4} \right\} \\
\leq C_2
 \exp\left\{ -\frac{N'p(p\wedge  (1-p))}{8C_2}( \varepsilon - \delta)  \left(1+\frac{\varepsilon-\delta}{32C_1(p\wedge  (1-p))}\right) \right\}~,
\end{multline}
as soon as $N'  \geq 16C_1^2/(p(\varepsilon - \delta)^2)$.
Combining \eqref{eq:2part} with \eqref{eq:1part} and \eqref{eq:dec2}, we obtain the desired bound
\begin{multline*}
\mathbb{P}_{H,G}\left\{ \Phi^{\phi}_{\alpha}(\D_{n'',m''}\left(\widehat{s})\right) =0  \right\}\\
\leq 18 \exp\left\{\frac{- CN''\left( \varepsilon - \delta\right)^2}{16}\right\}
+  C_2\exp\left\{ -\frac{N'p(p\wedge  (1-p))}{8C_2}( \varepsilon - \delta)  \log\left(1+\frac{\varepsilon-\delta}{32C_1(p\wedge  (1-p))}\right)  \right\}~.
\end{multline*}

	\section{Additional Numerical Experiments}\label{sec:app_expe}
	\noindent{\bf Exact parameters for the  synthetic datasets.}
\begin{itemize}
		\item[(L1-)] For $d = 4$, the diagonals of $\Sigma$ are  $(2, 6, 1, 5)$, $(-1, 0, 0)$, $(-1, 0)$, $(-1)$; $d=6$. For  $d = 6$, the matrix diagonals are equal to  $(2, 6, 1, 5, 4, 3)$, 
		$(-1, 0, 0, 0, 0)$, $(-1, 0, 0, )$, $(-1, 0, 0)$, $(-1, 0)$, $(-1)$.
		\item[(L1+)] For $d = 4$, the diagonals of $\Sigma$ are  $(6, 4, 5, 3)$, 
		$(-2, 4, 2)$, $(-3, 0)$, $(-2)$; $d=6$. For  $d = 6$, the matrix diagonals are equal to $(6, 5, 5, 3, 2, 3)$, 
		$(-2, 4, 2, 1, 0)$, $(-3, 0, 0, 1)$, $(-2, 1, 1)$, $(-3, 2)$, $(-2)$.
\end{itemize}

\noindent{\bf $\roc^*$ curves for the location and the scale models.}

\begin{figure}[ht!]
		\centering
	\begin{tabular}{cc}
	\parbox{.45\textwidth}{	
	\includegraphics[width=.4\textwidth]{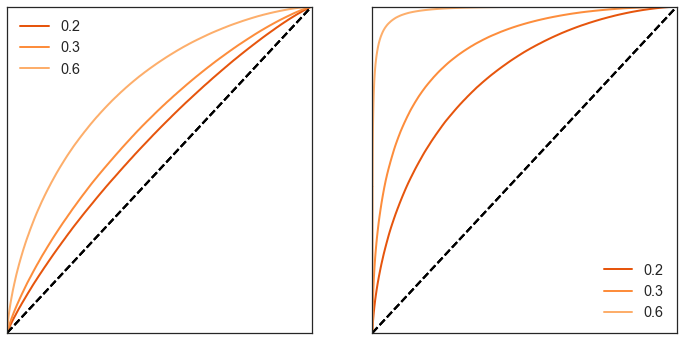}\\
	{a. (L1-), left: $d=4$, right: $d=6$ }\\
	\includegraphics[width=.4\textwidth]{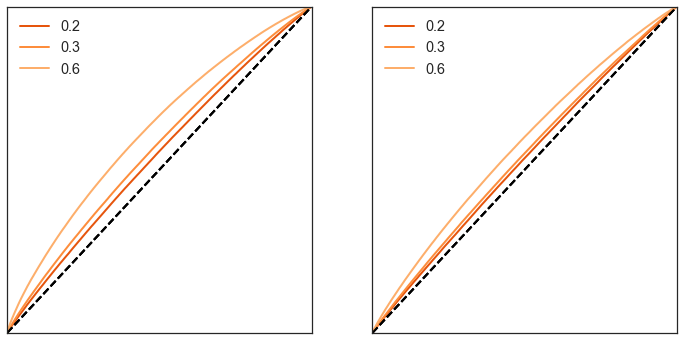}\\
	{b. (L1+), left: $d=4$, right: $d=6$}
}
	\parbox{.45\textwidth}{	
	\includegraphics[width=.4\textwidth]{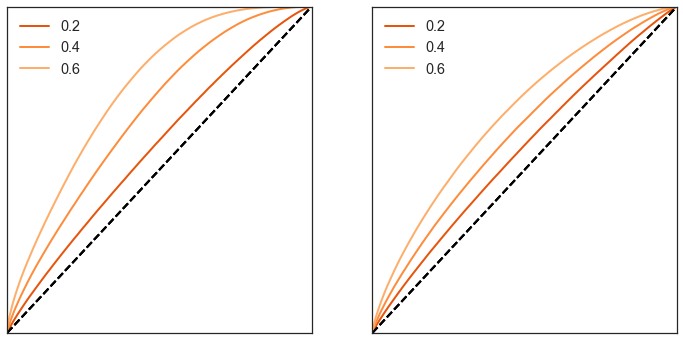}\\
	{c. (S1), left: $d=3$, right: $d=20$}\\
	\includegraphics[width=.4\textwidth]{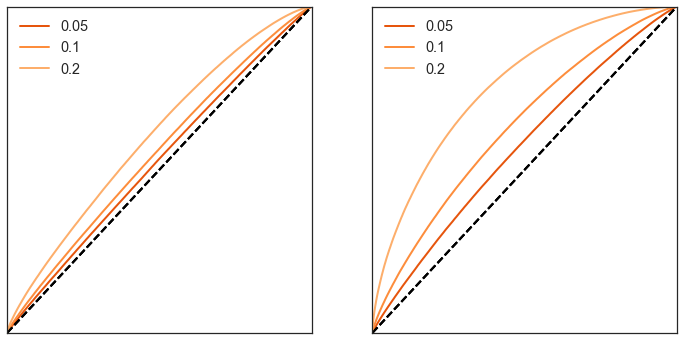}\\
	{d. (S2), left: $d=4$, right: $d=20$}
}
\end{tabular}
	\caption{ True $\roc$ curves ($\roc^*$) for the location and the scale models depending on the discrepancy parameter $\varepsilon \in\{0.2, 0.3, 0.6 \}$ for (L1+) and (L1-), $\varepsilon \in\{0.2, 0.4, 0.6 \}$ for (S1) and $\varepsilon \in\{0.05, 0.1, 0.2 \}$ for (S2).}
	\label{fig:rocstarlocscale}
\end{figure}

\noindent{\bf Additional numerical experiments.}\label{subsec:numexpesup}
The following figures show the numerical results of some experiments exposed in Section \ref{sec:num} but where the score-generating function implemented for the \textit{Step 2} is RTB: $\phi:u\mapsto u\mathbb{I}\{u\geq u_0\}$, with $u_0\in \{0.7, 0.8, 0.9\}$. We used \texttt{rForest}  for \textit{Step 1} as bipartite ranking algorithm.  We show the graphs of the empirical rejection rate of the homogeneity assumption under the alternatives  $\wrt$ the level of the test $\alpha\in (0,1)$.

\begin{figure}[ht!]
	\centering
			\includegraphics[width=.8\textwidth]{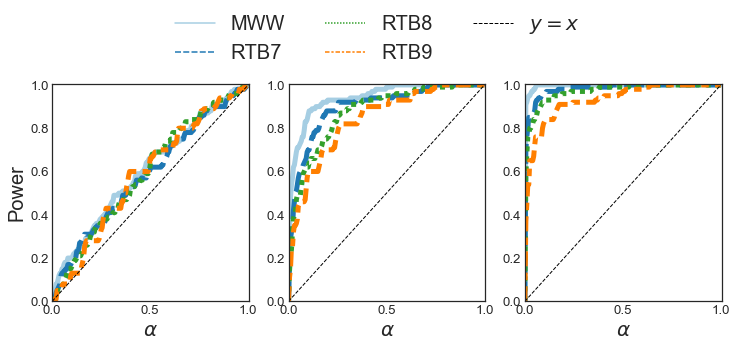}\\
			{\small a. (L1-) , $\varepsilon \in \{0.02, 0.05, 0.08\}$ from left to right}\\
			\includegraphics[width=.8\textwidth]{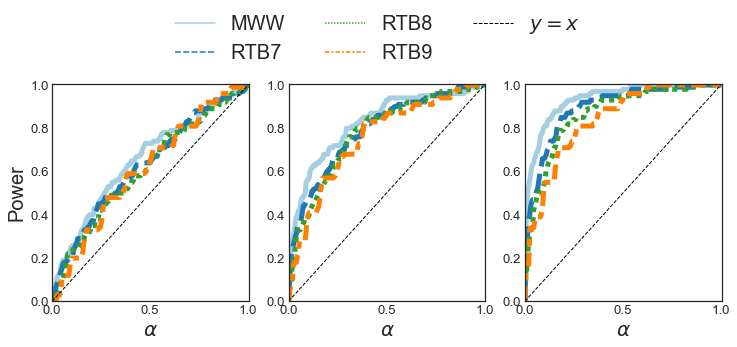}\\
			{\small b. (L1+), $\varepsilon \in \{0.05, 0.08, 0.1\}$ from left to right}
	\caption{  Rejection rates of the homogeneity assumption under alternatives $\wrt$ significance level $\alpha \in (0,1)$ for the models (L1-) (a)  and (L1+) (b) under $\cH_1$. \textit{Step 1} with \texttt{rForest}, \textit{Step 2} with score-generating function RTB   $\phi:u\mapsto u\mathbb{I}\{u\geq u_0\}$ with $u_0\in \{0.7, 0.8, 0.9\}$ $\resp$ corresponding to RTB7, RTB8, RTB9. Comparison of the curves with MWW. }
	\label{fig:RTBL1-L1+treerank}
\end{figure}

\end{document}